\documentclass[reqno, a4paper,11pt]{amsart}
\usepackage[english]{babel}
\usepackage[utf8]{inputenc}

\usepackage[dvipsnames]{xcolor}
\usepackage{enumerate}
\usepackage{amsthm,amssymb,mathrsfs}
\usepackage{amsmath}
\usepackage{graphicx,epsfig}
\usepackage{hyperref}
\usepackage{amsfonts, mathtools, subfig, makebox, dsfont, bm}
\usepackage{comment}
\usepackage{amsbsy}

\hypersetup{
    colorlinks=true, 
    linkcolor=NavyBlue,  
    citecolor=NavyBlue,  
    urlcolor=NavyBlue    
}

\newtheorem{theo}{Theorem}[section]
\newtheorem{lemma}[theo]{Lemma}
\newtheorem{prop}[theo]{Proposition}
\newtheorem{cor}[theo]{Corollary}

\newtheorem{hyp}[theo]{Hypothesis}
\newtheorem{hyps}[theo]{Hypotheses}
\newtheorem{defi}[theo]{Definition}
\theoremstyle{definition}

\newtheorem{rem}[theo]{Remark}

\def\C{{\mathbb{C}}}
\def\a{{\bm{a}}}
\def\R{{\mathbb{R}}}

\def\N{\mathbb N}

\def\K{\mathbb K}
\def\ph{\varphi}
\def\f{\bm{f}}
\def\g{\bm{g}}
\def\uu{\bm{u}}

\def\eps{\varepsilon}

\newcommand{\D}{\nabla}

\newcommand{\norm}[1]{\left\Vert#1\right\Vert}

\newcommand{\sign}[1]{\operatorname{sign}\left(#1\right)}

\DeclareMathOperator*{\esssup}{ess\,sup}

\DeclareMathOperator{\re}{\mathrm{Re}}

\allowdisplaybreaks

\begin{document}
\numberwithin{equation}{section}

\title[Maximal inequalities and Riesz transforms]{Maximal inequalities and Riesz transforms  for vector-valued magnetic Schr\"odinger operators}
\author[D. Addona]{Davide Addona}
\author[V. Leone]{Vincenzo Leone}
\author[L. Lorenzi]{Luca Lorenzi}
\author[E.M. Ouhabaz]{El Maati Ouhabaz}

\author[A. Rhandi]{Abdelaziz Rhandi}

\address{D. Addona, L. Lorenzi: Plesso di Matematica, Dipartimento di Scienze Matematiche, Fisiche e Informatiche, Università di Parma, Parco Area delle Scienze 53/A, 43124 Parma, Italy}
\email{davide.addona@unipr.it, luca.lorenzi@unipr.it}

\address{V. Leone, A. Rhandi: Dipartimento di Matematica, Università degli Studi di Salerno, Via Giovanni Paolo II, 132, 84084 Fisciano (SA), Italy}
\email{vleone@unisa.it, arhandi@unisa.it}

\address{E.M. Ouhabaz: Institut de Math\'ematiques de Bordeaux, Universit\'e de Bordeaux, UMR CNRS 5251. 
351, cours de la Liberation, 33405 Talence, France}
\email{Elmaati.Ouhabaz@math.u-bordeaux.fr}

\thanks{This work  is  supported by the project ``Elliptic and parabolic problems, heat kernel estimates and spectral theory" CUP D53D23005580006, funded by European Union
Next Generation EU within the PRIN 2022 program (D.D. 104 - 02/02/2022 Ministero dell’Università e della Ricerca) and by the project ``Operatori associati a Sistemi di equazioni di Kolmogorov in spazi $L^p$'' CUP E53C25002010001, founded by G.N.A.M.P.A. of the Italian Istituto Nazionale di Alta Matematica (I.N.d.A.M.). 
The authors D.A., V.L., L.L., A.R. are also members of G.N.A.M.P.A. of the Italian Istituto Nazionale di Alta Matematica (I.N.d.A.M.). }

\keywords{Magnetic Schr\"odinger operators, Riesz transform, maximal inequalities, Reverse H\"older class.}
 
\subjclass[2020]{35J47, 
47D08, 
42B20, 
42B37} 
 
\begin{abstract}
We consider vector-valued magnetic Schr\"odinger operators $-\bm \Delta_{\a}+V$ with magnetic potential $\bm a \in L^2_{\mathrm{loc}}(\R^d;\R^d)$ and electric potential $V$ given by a matrix-valued function whose entries belong to $L^1_{\mathrm{loc}}(\R^d)$. We prove maximal inequalities in $L^p(\R^d;\C^m)$, $p\in[1,\infty)$ and the boundedness of the Riesz transforms $(\nabla - i\bm a)(-\bm \Delta_{\bm a}+V)^{-\frac12}$ and $V^{\alpha}(-\bm \Delta_{\bm a}+V)^{-\alpha}$ on $L^p(\R^d;\C^m)$ for every $p \in (1,2]$ and every $\alpha\in[0,1/p]$.
\end{abstract}
	
\maketitle

\section{Introduction}

Vector-valued Schr\"odinger operators of the form $-\bm \Delta+V$ in $L^p(\R^d;\R^m)$, $p\in[1,\infty)$, for a suitable 
class of symmetric matrix-valued electric potentials $V$ are studied in \cite{ALLR24}. The aim there is to prove maximal inequalities for the operator $-\bm \Delta+V$ and generation results for the semigroups. More precisely, it is shown that the realization $T_p$ of $\bm\Delta-V$ in $L^p(\R^d;\R^m)$, $p\in[1,\infty)$, with maximal domain, generates a strongly continuous semigroup, which is also analytic for $p\in(1,\infty)$. Under reverse H\"older estimates on the minimal eigenvalue of $V$, it is also proved that the domain of $T_p$ coincides with the minimal domain in $L^p(\R^d;\R^m)$ for $p\in[1,\infty)$. 

The purpose of the present article is twofold. Firstly, we consider more general second order differential operators, including vector-valued Schr\"odinger operators with magnetic fields. Secondly, we improve the results from \cite{ALLR24} by  removing some assumptions on the potential. Our approach, which is quite different from that of the aforementioned article, yields more precise estimates. In addition, we study the boundedness on $L^p(\R^d; \C^m)$ for the Riesz transforms  $(\nabla - i\bm a)(-\bm \Delta_{\bm a}+V)^{-\frac12}$ and $V^{\alpha}(-\bm \Delta_{\bm a}+V)^{-\alpha}$.   

To give an overview of some results we first introduce some notation.  
Let $\a$ be a vector field in $\R^d$ and $V$  a nonnegative symmetric matrix with locally integrable entries. We define the  magnetic Schr\"odinger operator $\pmb{\mathscr{A}}$, whose action on vector-valued smooth functions 
$\bm{u} =(u_1, \dots,u_m) \colon\R^d\to\C^m$ is given, in the sense of distributions, by
\begin{equation}
\label{eq:operatore:distr}
(\pmb{\mathscr{A}}\bm{u})_h = 
-(\nabla-i\a)^*(\nabla-i\a)u_h+(V\bm u)_h=- \sum_{k=1}^d(\partial_k-ia_k)^2u_h+ \sum_{k=1}^m v_{hk}u_k
\end{equation}
for every $h\in\{1,\dots,m\}$, where $i$ is the imaginary unit. 
Since several operators are involved in this paper we adopt the following notation. 
\begin{description}
\item[$\bm{A}$] is the self-adjoint realization of $\pmb{\mathscr{A}}$. It is the operator associated with a symmetric 
sesquilinear  form on $L^2(\R^d;\C^m)$; 
\item[$\bm{L}$] denotes the operator $\bm{A}$ in the case where $V\equiv 0$, i.e., when $\pmb{\mathscr{A}}=-\bm\Delta_{\bm a}$. If $m=1$, then we simply write $L$.
\item[$H$] is the operator $-\Delta + \lambda_V$ with $\lambda_V(x)$ being the smallest eigenvalue of the matrix $V(x)$. It is 
classically defined  on $L^2(\R^d;\C)$ by a sesquilinear symmetric form.
 \end{description}
All the above operators are nonnegative and self-adjoint. We denote their associated  semigroups by 
$(e^{-t \bm{A}})_{t\ge0}$, $(e^{-t \bm{L}})_{t\ge0}$ and 
$(e^{-t H})_{t\ge0}$, respectively, on the corresponding $L^2$-space. We show that  each of these semigroups is  $L^\infty$-contractive. 
Therefore, they induce strongly continuous semigroups on $L^p$. If $\bm S\in\{\bm A, \bm L\}$, then we denote by $\bm S_p$ the infinitesimal generator of the extension of the semigroup $(e^{-t{\bm S}})_{t\geq0}$ to $L^p(\R^d;\C^m)$ for $p\in[1,\infty)$. A similar notation is also used  for $S\in\{H,L\}$.

Throughout the paper, unless otherwise specified, we make the following standing assumptions.

\begin{hyps}\label{hyp-operatore}

\begin{enumerate}[\rm (i)] 
\item 
The magnetic potential $\a$ belongs to $L^2_{\rm loc}(\R^d;\R^d)$.
\item 
The electric potential $ V\colon \R^d\to\R^{m\times m}$ is a matrix-valued function such that, for almost every $x\in\R^d$, the entries $v_{ij}$ satisfy $ v_{ij}(x) = v_{ji}(x)$, $v_{ij}\in L^1_{\rm loc}(\R^d)$ for all $ i,j\ \in\{1,\dots,m\}$. We also assume that the smallest  eigenvalue $\lambda_V(x)$ of  $V(x)$ is nonnegative for almost every $x\in\R^d$.
\end{enumerate}
\end{hyps}

Under these assumptions and adopting the above notation we  summarize below some of our main results.  
\begin{theo}
We have the following assertions. 
\begin{enumerate}[\rm (a)]
\item {\bf The diamagnetic inequality.} The semigroup $(e^{-t \bm{A}})_{t\ge0}$ is dominated by the semigroup $(e^{-tH})_{t\ge0}$. That is, for every $t\geq0 $ and every $\f\in L^2(\R^d;\C^m)$,
\begin{equation*}
\| (e^{-t \bm{A}} {\bm f}) (x)\| \le (e^{-t H} \| \bm f \|)(x), \qquad \textrm{a.e. }x\in\R^d. 
\end{equation*}
\item{\bf The $L^1$-maximal inequality.} Suppose in addition that the largest eigenvalue $\Lambda_V(x)$ of $V(x)$ is comparable to $\lambda_V(x)$ in the sense that there exists a positive constant $\kappa$ such that 
\begin{equation}\label{big-small}
\lambda_V(x) \le \Lambda_V (x) \le \kappa \lambda_V(x) \qquad\;\, a.e.\, x \in \R^d.
\end{equation}
Then, 
\begin{equation*}
\norm{\bm\Delta_{\a} \uu}_1\leq (\kappa+1)\|\bm{A}_1\bm u\|_1, \qquad\;\,  \norm{V\uu}_1 \le \kappa \norm{\bm{A}_1 \uu}_1,\quad\;\,\uu\in D(\bm{A}_1).
\end{equation*}
\item{\bf The $L^p$-maximal inequality.} Suppose again that \eqref{big-small} holds and that $\lambda_V$ belongs to the reverse H\"older class $B_q$ for some $ q\in (1,\infty)$. Then there exists a positive constant $c_p'$ such that, for every $p\in [1,q]$,
\begin{equation*}
\norm{\bm\Delta_{\a} \uu}_p\leq (\kappa c_p'+1)\|\bm{A}_p\bm u\|_p, \qquad  \norm{V\uu}_p \le \kappa c_p'\norm{\bm{A}_p \uu}_p,\quad\;\,\uu\in D(\bm{A}_p).
\end{equation*}
\item{\bf First-order Riesz transform.} Suppose  \eqref{big-small}  and  that $\a\in L^q_{\rm loc}(\R^d;\R^d)$ for some $q\in (2,\infty)$ and $\operatorname{div}\a\in L^s_{\rm loc}(\R^d)$ for some $s\in (1,\infty)$. Then, for every $p\in(1,2]$ the operators ${\bm L}^{\frac12}{\bm A}^{-\frac12}$ and $\nabla_\a{\bm{A}}^{-\frac12}$ admit bounded extensions to  $L^p(\R^d;\C^m)$.
\item{\bf  Zero-order Riesz transform.} Suppose again \eqref{big-small}.  For every $p\in (1,2]$ and every $\alpha\in[0,1/p]$, the operator $V^{\alpha}{\bm A}^{-\alpha}$ admits a bounded extension to $L^p(\R^d;\C^m)$.
\end{enumerate}
\end{theo}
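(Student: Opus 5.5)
This theorem summarizes the main results, so the plan is to prove (a)–(e) in order, each one feeding the next.

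\emph{Step (a).} For (a) I would use the Ouhabaz domination criterion for semigroups generated by forms. Write $\mathfrak a$ for the form of $\bm A$ on $L^2(\R^d;\C^m)$ and $\mathfrak h$ for the form of $H$ on $L^2(\R^d)$. Domination then amounts to two conditions. First, for $\uu\in D(\mathfrak a)$ we need $\|\uu\|\in D(\mathfrak h)$. Second, for $\uu\in D(\mathfrak a)$ and $0\le \varphi\in D(\mathfrak h)$ we need
\[
\re\,\mathfrak a\bigl(\uu,\varphi\,\sign{\uu}\bigr)\ \ge\ \mathfrak h\bigl(\|\uu\|,\varphi\bigr),
\]
together with the ideal property of $D(\mathfrak a)$. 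The key pointwise fact is Kato's inequality for the magnetic gradient,
\[
|\nabla\|\uu\||\le \|(\nabla-i\a)\uu\|,
\]
which holds because $\a$ is real. This gives $\re\langle(\nabla-i\a)\uu,(\nabla-i\a)(\varphi\uu/\|\uu\|)\rangle\ge \nabla\|\uu\|\cdot\nabla\varphi$. For the potential term we use $\langle V\uu,\uu\rangle\ge \lambda_V\|\uu\|^2$. The same argument with $\varphi\equiv$ truncations also shows that the semigroups are $L^\infty$-contractive.

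\emph{Step (b).} For (b) I would first prove a Kato-type inequality in $L^1$: for $\uu\in D(\bm A_1)$,
\[
\int \langle V\uu,\sign{\uu}\rangle\le \int\langle \bm A_1\uu,\sign{\uu}\rangle,
\]
where $\sign{\uu}=\uu/\|\uu\|$. This amounts to $\re\int\langle-\bm\Delta_\a\uu,\sign{\uu}\rangle\ge0$. I would prove it by regularizing with $\uu/\sqrt{\|\uu\|^2+\eps}$ and using the form computation from (a), first on a core and then by density. Since $\langle V\uu,\uu/\|\uu\|\rangle\ge\lambda_V\|\uu\|$ and $\|V\uu\|\le\Lambda_V\|\uu\|\le\kappa\lambda_V\|\uu\|$, we get
\[
\|V\uu\|_1\le\kappa\|\lambda_V\uu\|_1\le\kappa\|\bm A_1\uu\|_1 .
\]
The bound on $\bm\Delta_\a\uu$ then follows from the triangle inequality, with constant $\kappa+1$.

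\emph{Step (c).} For (c) I would reduce to the scalar operator $H$. By Shen's results for $\lambda_V\in B_q$, the operator $\lambda_V H^{-1}$ is bounded on $L^p$ for $p\in[1,q]$, with constant $c_p'$. Domination from (a) gives
\[
\|\bm A^{-1}\f\|\le H^{-1}\|\f\|
\]
pointwise, via the Laplace transform of the semigroups. Hence
\[
\|V\bm A^{-1}\f\|\le\kappa\lambda_V H^{-1}\|\f\| ,
\]
which yields $\|V\uu\|_p\le\kappa c_p'\|\bm A_p\uu\|_p$. The bound on $\bm\Delta_\a\uu$ again follows from the triangle inequality.

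\emph{Step (d).} For (d), at $p=2$ boundedness of $\nabla_\a\bm A^{-1/2}$ is immediate from the form identity $\|\nabla_\a\uu\|_2^2\le\mathfrak a(\uu,\uu)$. For $p\in(1,2)$ I would use the Blunck–Kunstmann / Coulhon–Duong weak type $(1,1)$ criterion, without kernel regularity. It needs Gaussian (Davies–Gaffney) bounds for $e^{-t\bm A}$, which follow from domination by $e^{-tH}\le e^{t\Delta}$, and $L^2$ off-diagonal estimates for $\sqrt t\,\nabla_\a e^{-t\bm A}$. Interpolation then covers $(1,2]$. The operator $\bm L^{1/2}\bm A^{-1/2}$ is handled through $\nabla_\a$ and the boundedness of the reverse Riesz transform of $\bm L$ for $p\le 2$. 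The integrability assumptions on $\a$ and $\operatorname{div}\a$ are what allow integration by parts to identify the domains and cores used in that reverse inequality.

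\emph{Step (e).} For (e), the endpoint $\alpha=0$ is trivial. At $p=1$, $\alpha=1$, the operator $V\bm A^{-1}$ is bounded by (b). For $p=2$, $\alpha=1/2$, the operator $V^{1/2}\bm A^{-1/2}$ is bounded by the form inequality. I would then interpolate between these endpoints using Stein interpolation on the analytic family $z\mapsto V^{z}\bm A^{-z}$. This requires bounded imaginary powers of $\bm A$ and of $V$: multiplication by $V^{i s}$ is unitary, and $\bm A^{is}$ is bounded because $\bm A$ has a bounded $H^\infty$-calculus on $L^p$ by the Gaussian bounds. Complex interpolation between $(p,\alpha)=(1,1)$ and $(2,1/2)$ lands exactly on the line $\alpha=1/p$. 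Interpolating with $\alpha=0$ then fills in $\alpha\in[0,1/p]$.

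The main obstacle is (d), specifically the off-diagonal estimates for the gradient combined with the $p<2$ bound for $\bm L^{1/2}$. The $p=1$ endpoint in (e) is delicate too, since $\bm A^{is}$ is not bounded on $L^1$. There I would instead use weak type at $p=1$, or interpolate on the Hardy space associated with $\bm A$.
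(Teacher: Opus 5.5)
Parts (a)–(c) are sound.

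\begin{itemize}
\item (a) is the paper's argument via Ouhabaz's criterion.
\item In (b), your direct vector-valued Kato argument works: pair $\bm A\uu=\bm A_1\uu$ with $\uu/\sqrt{\|\uu\|^2+\eps}\in D(\mathfrak a)$ on the core $(1+\bm A_1)^{-1}C_c^\infty(\R^d;\C^m)$, then use density and Fatou. This is a valid alternative to the paper's route, which reduces via domination to Kato's scalar $L^1$ estimate for $-\Delta+(\lambda_V\wedge n)$.
\item (c) is the paper's route. You must work with $\eps+\bm A_p$ rather than $\bm A_p^{-1}$. You also need Auscher--Ben Ali rather than Shen when $q<d/2$.
\end{itemize}

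\textbf{The gap in (d).} The genuine gap is the operator $\bm L^{1/2}\bm A^{-1/2}$. You derive it from $\nabla_\a\bm A^{-1/2}$ through a reverse inequality $\|\bm L^{1/2}\uu\|_p\lesssim\|\nabla_\a\uu\|_p$ for $p\in(1,2)$. That inequality is not available here.
\begin{itemize}
\item By duality it would follow from $L^{p'}$-boundedness, $p'>2$, of the scalar magnetic Riesz transform $\nabla_\a L^{-1/2}$. This is not known for general $\a$; it needs extra conditions on the magnetic field.
\item The direct Calder\'on--Zygmund route for Sobolev functions relies on Poincar\'e inequalities, which have no counterpart for $\nabla_\a$.
\end{itemize}

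The paper goes the other way. It proves $\|\bm L^{1/2}\bm A^{-1/2}\f\|_1\le(1+\kappa)\|\f\|_1$ directly, in three moves.
\begin{itemize}
\item Balakrishnan's formula gives $\bm L^{1/2}(n^{-1}+\bm A)^{-1/2}\f=\f-\frac{1}{2\sqrt\pi}\int_0^\infty t^{-3/2}\bigl(e^{-t\bm L}-e^{-t(n^{-1}+\bm A)}\bigr)(n^{-1}+\bm A)^{-1/2}\f\,dt$.
\item Duhamel rewrites the difference of semigroups as $\int_0^t e^{-(t-s)\bm L}(n^{-1}+V)e^{-s(n^{-1}+\bm A)}\,ds$.
\item $L^1$-contractivity of $e^{-t\bm L}$, $\Lambda_V\le\kappa\lambda_V$, domination by $H$, and Kato's bound $\|(n^{-1}+\lambda_V)(n^{-1}+H)^{-1}g\|_1\le\|g\|_1$ finish the estimate.
\end{itemize}
The hypotheses on $\a$ and $\operatorname{div}\a$ are used exactly to justify Duhamel in $L^1$. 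Kato's distributional inequality for $\Delta_\a$ places the relevant functions in $D(\bm L_1)$. They play no role in identifying domains for a reverse inequality. Riesz--Thorin with the $L^2$ bound then covers $p\in(1,2]$. Finally, $\nabla_\a\bm A^{-1/2}=(\nabla_\a\bm L^{-1/2})\,\bm L^{1/2}\bm A^{-1/2}$ reduces the gradient to the known scalar magnetic Riesz transform, applied componentwise.

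Your singular-integral treatment of $\nabla_\a\bm A^{-1/2}$ alone is plausible; the paper itself mentions adapting Duong--McIntosh. It would not even need the comparability of eigenvalues. However, it gives dimension-dependent constants and says nothing about $\bm L^{1/2}\bm A^{-1/2}$.

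\textbf{The gap in (e).} You correctly locate the obstruction on the line $\re z=1$, but you leave it unresolved. Stein interpolation with a weak-type endpoint is not a standard tool. The Hardy-space route would require an $H^\infty$-calculus for the vector-valued $\bm A$ on $H^1_{\bm A}$ and an identification of the interpolation spaces, none of which you set up.

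The missing idea is that no imaginary powers of $\bm A$ are needed on $L^1$.
\begin{itemize}
\item The paper uses the regularized family $\bm T_z=V_n^z(n^{-1}+\bm A)^{-z}$ for $\frac12\le\re z\le1$. This also makes sense of $V^z\bm A^{-z}$ when $V$ is only locally integrable and $\bm A$ is not invertible.
\item On $\re z=1$ it writes $(n^{-1}+\bm A)^{-1-is}=\Gamma(1+is)^{-1}\int_0^\infty t^{is}e^{-t/n}e^{-t\bm A}\,dt$.
\item Pointwise domination then gives $\|(\bm T_{1+is}\f)(x)\|\le\kappa|\Gamma(1+is)|^{-1}(\lambda_V\wedge n)(x)\bigl((n^{-1}+H)^{-1}\|\f\|\bigr)(x)$.
\item The factor $|\Gamma(1+is)|^{-1}$ grows like $e^{\pi|s|/2}$, which is admissible. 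Kato's scalar $L^1$ estimate then gives a bound uniform in $n$.
\item The line $\re z=\frac12$ is handled the same way in $L^2$.
\end{itemize}
Imaginary powers of $\bm A$, bounded through the $H^\infty$-calculus on $L^p$ for $1<p<\infty$, enter only in the second interpolation, between $\alpha=0$ and $\alpha=1/p$ at fixed $p\in(1,2]$.

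You also still need to define $V^\alpha\bm A^{-\alpha}$. This means showing that $V_n^\alpha(n^{-1}+\bm A)^{-\alpha}\f$ converges a.e. as $n\to\infty$, then passing the uniform bounds to the limit by Fatou. The paper does this by dominated convergence, using a scalar estimate for $\lambda_V^\alpha(n^{-1}+H)^{-\alpha}$. Your proposal does not address it.
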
 
 
The proof of the diamagnetic inequality is based on criteria for the domination of semigroups proved in \cite{Ouhabaz96} 
(and in \cite{Ouhabaz99} for vector-valued operators). See also \cite[Theorem 2.30]{Ouhabaz}. The domination 
in assertion $(a)$ implies in particular that the semigroup $(e^{-t { \bm A}})_{t\ge0}$ extends to a contraction semigroup 
on $L^p(\R^d; \C^m)$ for all $p \in [1, \infty)$. Thus, we can define ${ \bm A}_p$ as minus the generator of the semigroup
on $L^p(\R^d; \C^m)$. The $L^1$-maximal inequality is heavily based on the diamagnetic inequality and differs from the 
proof  in \cite{ALLR24}. We note that in the latter reference, the authors consider the case ${\bm a} \equiv \bm 0$ and 
assume in addition that $v_{ij} \le 0$ for all $i,j\in\{1,\dots,m\}$ such that $i \not=j$. Concerning the Riesz transform 
$\nabla_\a{\bm{A}}^{-\frac12}$ on $L^p(\R^d; \C^m)$, 
our proof is partially inspired by  \cite{dziubanski} who treats the case of scalar-valued Schr\"odinger operators $-\Delta + V$. 
The aim in \cite{dziubanski}  is to prove dimension-free and potential-free estimates on $L^p(\R^d; \C)$ for 
$\nabla(-\Delta + V)^{-\frac12}$ with nonnegative potentials $V$. This was achieved there, in dimensions $d\ge 3$, by elegant and quite elementary 
arguments which avoid appealing to  the theory of singular integral operators. We emphasize that, in the scalar case $m=1$, we obtain dimension-free and potential-free estimates for the Riesz transforms of magnetic Schr\"odinger operators, with no restriction on the dimension. To the best of our knowledge, this result is new.
We believe that one can adapt the 
proof from \cite{DMc}, based on singular integrals,  to our general vector-valued context. 
However, the approach we adopt here gives some additional information and in particular it provides that the estimates of the Riesz transform depend on $V$ and possibly on the dimension only through the constant $\kappa$ in  \eqref{big-small}. We  mention that the question of the Riesz transform for Schr\"odinger operators has been studied before in the scalar-valued case. In fact, the cases $p\in(1,2]$ and $p=1$ (in a weak-type  sense) were treated in \cite{sikora}, using the wave equation method and in \cite{DOY06} using Hardy space techniques. The papers \cite{BenAli1, BenAli2} treat electric potential satisfying reverse H\"older estimates, 
following the approach of \cite{auscher-benali:2007}  and \cite{Shen} in the absence of a magnetic field. The zero-order Riesz type transforms $V^\alpha (-\Delta + V)^{-\alpha}$ were considered in \cite{KW25} in the scalar-valued case and nonnegative potentials $V$. We follow and adapt some of their arguments to our general setting. The boundedness on $L^p(\R^d; \C)$ of the zero-order Riesz transform  is proved in  \cite{KW25} by Stein's complex  interpolation theorem. We provide in the appendix a vector-valued counterpart of this well-known result. It is worth emphasizing that, when working with complex interpolation techniques, the boundedness of the imaginary powers of the operator $\bm A$ in $L^p$-spaces plays a crucial role. 
 This property follows from a recent result in \cite{ALLR-2}, combined with the fact that the semigroup generated by $\bm A$ satisfies Gaussian-type estimates.

\subsection*{Organization of the paper}
In Section \ref{sec:msy}, we define  the operator $\bm A$ via the theory of sesquilinear forms. 
We also establish a key vector-valued result on the magnetic gradient, which allows us to prove, in 
Section \ref{sec:diam:sg}, the diamagnetic inequality.  
Maximal inequalities in $L^p(\R^d;\C^m)$ are established in Section \ref{sec:max_ineq_Lp}, 
assuming some reverse H\"older estimates on $\lambda_V$ when $p>1$. The case $p=1$ plays a fundamental 
role in  Section \ref{sec:RT}, where we prove the boundedness of the Riesz transform $(\nabla-i\a)\bm{A}^{-\frac12}$ in 
$L^p(\R^d;\C^m)$ for  $p\in(1,2]$ under additional assumptions on $\a$. 
We stress that such additional assumptions enable us to prove the Kato's distributional inequality for the magnetic Laplacian, 
obtained in \cite{kato:1972} under the stronger condition $\a\in C^1(\R^d;\R^d)$.    
Finally, the boundedness of the operator $V^{\alpha}{\bm A}^{-\alpha}$ on $L^p(\R^d;\C^m)$, 
$p\in(1,2]$ and $\alpha\in[0,1/p]$, is discussed in Section \ref{sec:0:RT}. 
The latter result relies on a vector-valued Stein's complex interpolation theorem which we present in the Appendix \ref{appendix}.

\subsection*{Notation} Let $\K$ be either $\R$ or $\C$. For every $\xi\in\K^m$, 
$\|\xi\| = \langle\xi,\xi\rangle^{\frac12}$ stands for the Euclidean norm of $\K^m$. If $\bm f:\R^d\to \K^m$, then $\|\bm f\|$ denotes the scalar function defined as $\|\bm f\|(x)=\|\bm f(x)\|$ for every $x\in\R^d$. Moreover, 
${\rm sign}(\f) = \f\|\f\|^{-1}{\mathds 1}_{\{\f\ne \bm{0}\}}$, where, for every $\Omega\subset\R^d$, $\mathds 1_{\Omega}$ is its characteristic function. If $\f:\R^d\to\K^m$ is a vector-valued function, we denote its components $f_1,\ldots,f_m$. Similarly, if $(\f_{\varepsilon})_{\varepsilon\in E}$ is a set of functions which take values in $\K^m$, then $f_{\varepsilon,1},\ldots,f_{\varepsilon,m}$ denote the components of the function $\f_{\varepsilon}$.  

$C_c^{\infty}(\R^d;\mathbb{K}^m)$ denotes the space of compactly supported and infinitely differentiable functions $\f:\R^d\to\mathbb{K}^m$ ($d,m\in\N$) and $L^p(\R^d;\mathbb{K}^m)$ denotes the space of (the equivalence classes of) measurable $\mathbb K^m$-valued functions, for which
$\|\f\|_p^p= \int_{\R^d}\|\f(x)\|^pdx$ is finite, if $1 \leq p <\infty$, and such that
$\|\f\|_{\infty}= {\rm ess\sup}_{x\in\R^d}\|\f(x)\|<\infty$, if $p=\infty$. $L^p_{\rm loc}(\R^d;\mathbb{K}^m)$ is the set of (equivalence classes of) measurable functions that belong to $L^p(\Omega;\mathbb{K}^m)$ for every bounded measurable subset $\Omega$ of $\R^d$.
For $k\in\N$ and $p$ as above, $W^{k,p}(\R^d;\mathbb{K}^m)$ denotes the Sobolev space of order $k$, i.e., the space of functions $\f\in L^p(\R^d;\mathbb{K}^m)$ such that the distributional derivative $\frac{\partial^{\beta}\f}{\partial x^{\beta}}$ belongs to $L^p(\R^d;\mathbb{K}^m)$ for any multi-index $\beta$ with length at most  $k$. When $p=2$, we write $\textup{H}^k(\R^d;\mathbb{K}^m)$ instead of $W^{k,2}(\R^d;\mathbb{K}^m)$. If $\mathbb K=\mathbb R$ and $m=1$, then in the function spaces we omit the codomain. Moreover, we simply write $\partial_k$ instead of $\frac{\partial}{\partial x_k}$.

For $ q\in(1,\infty)\cup\{\infty\}$, the reverse H\"older class $B_q$ is the set of almost everywhere positive functions $w\in L_{\rm loc}^q(\R^d)$ for which
there exists a positive constant $c$ such that 
\begin{equation*}
\left(\frac1{|Q|}\int_Q w^q(x)dx\right)^{\frac1q}\le\frac c{|Q|}\int_Qw(x)dx
\end{equation*}
for all cubes $Q\subset\R^d$. For $q=\infty$, the left-hand side in the last condition is replaced by $\esssup_{x\in Q}w(x)$.
The minimum of the positive constants $c$ such that the previous inequality is satisfied is denoted by $[w]_{B_q}$. H\"older's inequality shows that 
$[w]_{B_q}\ge 1$ for every $w\in B_q$ and $B_q\subseteq B_p$ with $[w]_{B_p}\leq [w]_{B_q}$ for every $p\in(1,q)$.

The space of bounded linear operators from a Banach space $X$ into another Banach space $Y$ is denoted by $\mathcal B(X,Y)$. If $Y=X$ we simply write $\mathcal B(X)$. 

Let $\Omega$ be a subset of $\R^d$. We denote by $\mathscr{L}(\Omega)$ the set of all Lebesgue measurable subsets of $\Omega$ with finite Lebesgue measure.

The open ball in $\R^d$ centered at $x$ with radius $r$ is denoted by $B(x,r)$ and $|B(x,r)|$ stands for its Lebesgue measure. When $x=0$, we simply write $B(r)$ instead of $B(0,r)$.

\section{Magnetic Schr\"odinger systems}
\label{sec:msy}

Given a vector field $\a\colon\R^d\to\R^d$, we introduce the magnetic gradient $\nabla_\a = \nabla -i\a$ and the magnetic Laplace operator $\Delta_{\a} = -\nabla_{\a}^*\nabla_{\a}$ whose action on a scalar smooth function $f\colon\R^d\to\C$ is $\Delta_{\a} f = \sum_{k=1}^d(\partial_k-ia_k)^2f$. Denoting by $\bm\Delta_\a\bm u $ the vector $(\Delta_\a u_1,\dots,\Delta_\a u_m)$ for a 
vector-valued smooth function $\bm{u}\colon\R^d\to\C^m$, it follows that the operator $\pmb{\mathscr{A}}$, introduced in \eqref{eq:operatore:distr}, reads as
\begin{equation*}
\pmb{\mathscr{A}}\bm{u}\coloneqq-{\bm\Delta}_{\a}\bm{u}+V\bm{u}.  
\end{equation*}

We begin with an auxiliary result on the magnetic gradient $\nabla_{\a}$. 

\begin{lemma}
\label{lem:first:diamagnetic}
Let $\a$ satisfy Hypothesis $\ref{hyp-operatore}(i)$ and let ${\uu}\in L^2(\R^d;\C^m)$ be such that $\nabla_{\a}u_i\in L^2(\R^d;\C^d)$ for every $i\in\{1,\dots,m\}$. Then, $\|\uu\|$ belongs to $\textup{H}^1(\R^d)$ and 
\begin{equation}\label{eq:diamagnetic:01}
\|(\D\|\uu\|)(x)\|^2 \le\sum_{j=1}^m\|(\D_{\a} u_j)(x)\|^2,\qquad\;\,
\textit{a.e.}~x\in\R^d.
\end{equation}
\end{lemma}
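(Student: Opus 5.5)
The plan is to regularize the norm and use the standard scalar diamagnetic computation (Kato's inequality in the form of Lieb--Loss). For $\eps>0$ set $\bm u_\eps=\big(\sum_{j=1}^m|u_j|^2+\eps^2\big)^{1/2}$. First I record the local regularity of each component: since $\a\in L^2_{\rm loc}$ and $u_j\in L^2$, the product $\a u_j$ lies in $L^1_{\rm loc}$. Hence $\nabla u_j=\nabla_\a u_j+i\a u_j\in L^1_{\rm loc}$, so $u_j\in W^{1,1}_{\rm loc}(\R^d;\C)$. Then $|u_j|^2=u_j\overline{u_j}$ lies in $W^{1,1}_{\rm loc}$ via a product rule. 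The product rule needs some care, since $u_j$ is only $W^{1,1}_{\rm loc}$; I would justify it by truncating $u_j$ or by mollifying, as in Lieb--Loss, Theorem 6.17. It gives
\begin{equation*}
\nabla |u_j|^2 = 2\re\big(\overline{u_j}\nabla u_j\big)=2\re\big(\overline{u_j}\nabla_\a u_j\big),
\end{equation*}
where the last equality holds because $\re(\overline{u_j}\, i\a u_j)=\re(i\a|u_j|^2)=0$, $\a$ being real-valued. This cancellation is the heart of the diamagnetic phenomenon.

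Next, $\bm u_\eps$ is a $C^1$ function of $s=\sum_j|u_j|^2\in W^{1,1}_{\rm loc}$, and the map $s\mapsto(s+\eps^2)^{1/2}$ has bounded derivative on $[0,\infty)$. The chain rule therefore gives $\bm u_\eps\in W^{1,1}_{\rm loc}$ with
\begin{equation*}
\nabla \bm u_\eps=\frac{\sum_{j}\re\big(\overline{u_j}\nabla_\a u_j\big)}{\bm u_\eps}.
\end{equation*}
By Cauchy--Schwarz, first in $\C^d$ for each $j$ and then in $\R^m$ over $j$, I get
\begin{equation*}
\|\nabla\bm u_\eps\|\le \frac{\|\uu\|}{\bm u_\eps}\Big(\sum_j\|\nabla_\a u_j\|^2\Big)^{1/2}\le\Big(\sum_j\|\nabla_\a u_j\|^2\Big)^{1/2}.
\end{equation*}
The right-hand side is in $L^2$ and independent of $\eps$.

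Finally I pass to the limit $\eps\to0$. On every ball, $\bm u_\eps-\eps\to\|\uu\|$ in $L^1_{\rm loc}$, indeed uniformly, since $0\le\bm u_\eps-\|\uu\|\le\eps$. The gradients converge pointwise a.e. to $G:=\mathds 1_{\{\uu\neq\bm 0\}}\sum_j\re(\overline{u_j}\nabla_\a u_j)/\|\uu\|$ and are dominated by an $L^2$ function. Dominated convergence then shows that the distributional gradient of $\|\uu\|$ equals $G$. Since $\|\uu\|\in L^2$ and $G\in L^2$, we get $\|\uu\|\in\textup{H}^1(\R^d)$, and the pointwise bound \eqref{eq:diamagnetic:01} follows from the same Cauchy--Schwarz estimate applied to $G$.

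I expect the main obstacle to be the rigorous justification of the product and chain rules under the weak hypotheses. We only know $u_j\in W^{1,1}_{\rm loc}\cap L^2$, and $|u_j|^2$ need not be locally in $W^{1,1}$ without an argument, since $\overline{u_j}\nabla u_j$ is a product of an $L^2$ and an $L^1_{\rm loc}$ function. My first attempt would be to avoid $|u_j|^2$ and instead work with $\re(\overline{u_j}\nabla_\a u_j)$, which is in $L^1$ as a product of two $L^2$ functions. I would approximate $u_j$ by $u_j\phi_\eps$ mollified, or use the truncations $u_j/\max(1,|u_j|/n)$. If that proves messy, a fallback is to use the scalar result that $u_j\in W^{1,1}_{\rm loc}$ implies $|u_j|\in W^{1,1}_{\rm loc}$ with $\nabla|u_j|=\re(\overline{\mathrm{sign}\,u_j}\nabla u_j)$. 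Then $\bm u_\eps$ is a Lipschitz function of the vector $(|u_1|,\dots,|u_m|)$, which allows the multivariable chain rule to be applied componentwise.
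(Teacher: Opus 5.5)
Your proposal is correct and follows essentially the same route as the paper: regularize with $(\|\uu\|^2+\eps^2)^{1/2}$, use the cancellation $\re(\overline{u_j}\nabla u_j)=\re(\overline{u_j}\nabla_{\a}u_j)$ together with Cauchy--Schwarz, and then let $\eps\to0^+$ by dominated convergence. The obstacle you flag does not arise in the paper's version. There the chain rule is applied directly, by smooth approximation, to the map $z\mapsto(\|z\|^2+\eps^2)^{1/2}$ on $\C^m\cong\R^{2m}$, which has bounded gradient, composed with $\uu\in W^{1,1}_{\rm loc}(\R^d;\C^m)$, so one never needs to show that $|u_j|^2\in W^{1,1}_{\rm loc}$.
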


\begin{proof}
Let $\bm u$ be as in the statement. It suffices to prove formula \eqref{eq:diamagnetic:01}, since it immediately implies that the gradient of $\|\uu\|$ belongs to $L^2(\R^d;\R^d)$, so that $\|\uu\|\in \textup{H}^1(\R^d)$.

To prove formula \eqref{eq:diamagnetic:01}, we fix $j\in\{1,\ldots,m\}$ and begin by observing that, 
since $\nabla u_j = \nabla_{\a} u_j + i\a u_j$, $\uu\in L^2(\R^d;\C^m)$ and $\a\in L^2_{\rm loc}(\R^d;\R^d)$, H\"older's inequality shows that the distributional gradient $\nabla u_j$ belongs to $L^1_{\rm loc}(\R^d;\C^d)$. It thus follows that $\|\uu\|$ belongs to $W^{1,1}_{\rm loc}(\R^d)$. Moreover, 
\begin{equation}
\D\|\uu\| = \frac1{\|\uu\|}\sum_{j=1}^m \re(\overline{u_j}\,\D u_j){\mathds 1}_{\{\uu\ne\bf0\}}    
\label{eq:grad:norma}
\end{equation}
almost everywhere in $\R^d$. This formula can be proved arguing as in 
\cite[Lemma 2.4]{MR18}, which deals with the case $p\in (1,\infty)$. 
The main steps are as follows:
\begin{enumerate}[\rm (i)]
\item 
By an approximation argument, via smooth functions, one proves that, 
for every $\varepsilon>0$, the function $w_{\varepsilon}=(\|\uu\|^2+\varepsilon^2)^{\frac{1}{2}}-\varepsilon$ belongs to $W^{1,1}_{\rm loc}(\R^d)$, and that 
\begin{eqnarray*}
\nabla w_{\varepsilon}=\frac{1}{w_{\varepsilon}}\sum_{j=1}^m{\rm Re}(\overline{u_j}\nabla u_j)    
\end{eqnarray*}
almost everywhere in $\R^d$.
\item 
By the dominated convergence theorem, $w_{\varepsilon}$ converges to $\|\uu\|$ in $L^1(K)$, and $\nabla w_{\varepsilon}$ to the right-hand side of 
\eqref{eq:diamagnetic:01} in $L^1(K;\R^d)$ as $\varepsilon$ tends to $0^+$, for every compact set $K\subset\R^d$. 
\end{enumerate}

Since $\re(\overline{u_j}\nabla u_j) 
= \re(\overline{u_j}\nabla_{\a}u_j)$, formula \eqref{eq:diamagnetic:01} can be rewritten as
\begin{equation*}
\D\|\uu\|= \frac1{\norm{\uu}}\sum_{j=1}^m \re(\overline{u_j}\,\D_{\a} u_j){\mathds 1}_{\{\uu\ne\bf0\}}
\end{equation*}
almost everywhere in $\R^d$.

Finally, the Cauchy-Schwarz inequality yields 
\begin{align*}
\|\uu\| \|\nabla\|\uu\|\|\le\sum_{j=1}^m|\re(\overline{u_j}\,\D_{\a} u_j)|
\le\|\uu\|\bigg (\sum_{j=1}^m|\D_{\a} u_j|^2\bigg )^{\frac12}
\end{align*}
pointwise in $\R^d$ and formula \eqref{eq:diamagnetic:01} is proved. 
\end{proof}

We now introduce the sesquilinear form $\mathfrak{a}:D(\mathfrak{a})\times D(\mathfrak{a})\to\C$, defined by
\begin{equation*}
\mathfrak{a}(\bm{u},\bm{v})=\sum_{i=1}^m\int_{\R^d}\langle \nabla_{\a} u_i(x),\nabla_{\a} v_i(x)\rangle dx+\int_{\R^d}\langle V(x) \bm{u}(x),\bm{v}(x) \rangle dx, \qquad\;\, \bm{u},\bm{v}\in D(\mathfrak{a}),    
\end{equation*}
where
\begin{equation*}
  D(\mathfrak{a})\coloneqq\{\bm{f}\in L^2(\R^d;\C^m)\colon \nabla_{\a}f_i\in L^2(\R^d;\C^d) \text{ for } i\in\{1,\dots,m\}, V^{\frac{1}{2}}\bm{f}\in L^2(\R^d;\C^m)\} 
\end{equation*}
is endowed with the norm $\|\uu\|_{D(\mathfrak{a})}^2
=\|\uu\|_2^2+\mathfrak{a}(\uu,\uu)$.

In the next propositions we summarize the main properties of this form. We begin with a preliminary lemma, whose straightforward proof is left to the reader.

\begin{lemma}
Let $\a$ satisfy Hypothesis $\ref{hyp-operatore}(i)$. Then, the space 
\begin{equation*}
\textup{H}_{\a}^1(\R^d;\C^m)\coloneqq\{\bm{f}\in L^2(\R^d;\C^m)\colon \nabla_{\a}f_i\in L^2(\R^d;\C^d) \textit{ for } i\in\{1,\dots,m\}\}
\end{equation*}
is a complete Hilbert space if endowed with the norm $\|\bm f\|_{\textup{H}^1_{\a}}^2:=\|\bm f\|_2^2+\|\nabla_{\a}\bm f\|_2^2$ for every $\bm f\in \textup{H}^1_{\bm a}(\R^d;\C^m)$.
\end{lemma}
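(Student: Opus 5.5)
The norm $\|\cdot\|_{\textup{H}^1_{\a}}$ comes from the inner product $(\bm f,\bm g)_{\textup{H}^1_{\a}}=\int_{\R^d}\langle\bm f,\bm g\rangle dx+\sum_{i=1}^m\int_{\R^d}\langle\nabla_{\a}f_i,\nabla_{\a}g_i\rangle dx$. It is positive definite because of the $L^2$ term, so the only point to prove is completeness.

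Let $(\bm f_n)$ be a Cauchy sequence in $\textup{H}^1_{\a}(\R^d;\C^m)$. Then $\bm f_n\to\bm f$ in $L^2(\R^d;\C^m)$, and $\nabla_{\a}f_{n,i}\to\bm g_i$ in $L^2(\R^d;\C^d)$ for every $i\in\{1,\dots,m\}$. It remains to show that $\nabla_{\a}f_i=\bm g_i$ in the sense of distributions. Once this is known, $\bm f\in\textup{H}^1_{\a}(\R^d;\C^m)$ and $\bm f_n\to\bm f$ in the $\textup{H}^1_{\a}$-norm.

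For this identification, fix $\varphi\in C_c^\infty(\R^d)$ and write, by definition of the distributional derivative,
\begin{equation*}
\int_{\R^d}(\nabla_{\a}f_{n,i})\varphi\,dx=-\int_{\R^d}f_{n,i}\nabla\varphi\,dx-i\int_{\R^d}\a f_{n,i}\varphi\,dx .
\end{equation*}
The left-hand side converges to $\int_{\R^d}\bm g_i\varphi\,dx$, and the first term on the right converges because $f_{n,i}\to f_i$ in $L^2$.

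The only delicate point is the magnetic term, which I expect to be the main (though mild) obstacle. Since $\a\in L^2_{\rm loc}(\R^d;\R^d)$ by Hypothesis \ref{hyp-operatore}(i), we have $\a\varphi\in L^2(\R^d;\R^d)$, so the Cauchy--Schwarz inequality gives $\int_{\R^d}\a f_{n,i}\varphi\,dx\to\int_{\R^d}\a f_i\varphi\,dx$. Hence $\bm g_i=\nabla f_i-i\a f_i$ as distributions, which concludes the argument. Note that $\a f_i\in L^1_{\rm loc}$, as already observed in the proof of Lemma \ref{lem:first:diamagnetic}.
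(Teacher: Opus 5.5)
Your proof is correct. The paper leaves this lemma to the reader as straightforward, and your argument is the standard one it has in mind: pass to the $L^2$-limits $\bm f$ and $\bm g_i$, then identify $\bm g_i$ with the distributional magnetic gradient $\nabla_{\a}f_i$ by testing against $\varphi\in C_c^\infty(\R^d)$, using $\a\varphi\in L^2$ for the magnetic term.
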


\begin{prop}\label{prop:prop:a}
The sesquilinear form $\mathfrak{a}$ is densely defined, accretive, symmetric, continuous and closed.
\end{prop}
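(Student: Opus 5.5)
The plan is to verify the properties in increasing order of difficulty: symmetry, accretivity and continuity first, then density of the domain, and finally closedness.

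\emph{Symmetry, accretivity, continuity.} Since $V(x)$ is real symmetric for a.e.\ $x$, we have $\langle V\uu,\bm v\rangle=\overline{\langle V\bm v,\uu\rangle}$ pointwise. Hence $\mathfrak a(\bm v,\uu)=\overline{\mathfrak a(\uu,\bm v)}$. Because $\lambda_V\ge0$, the matrix $V(x)$ is positive semidefinite, so
\[
\mathfrak a(\uu,\uu)=\|\nabla_{\a}\uu\|_2^2+\|V^{\frac12}\uu\|_2^2\ge0,
\]
which gives accretivity. Continuity with respect to $\|\cdot\|_{D(\mathfrak a)}$ follows from the Cauchy--Schwarz inequality applied to each of the two integrals, written as $\int\langle V^{\frac12}\uu,V^{\frac12}\bm v\rangle\,dx$. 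Here $V^{\frac12}$ is the measurable pointwise square root; its measurability follows, for instance, from its representation through the continuous functional calculus.

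\emph{Density.} I will show that $C_c^\infty(\R^d;\C^m)\subset D(\mathfrak a)$. For $\bm\varphi\in C_c^\infty$, we have $\nabla_{\a}\varphi_i=\nabla\varphi_i-i\a\varphi_i\in L^2$, because $\a\in L^2_{\rm loc}$ and $\varphi_i$ is bounded with compact support. Moreover,
\[
\|V^{\frac12}\bm\varphi\|^2=\langle V\bm\varphi,\bm\varphi\rangle\le \Lambda_V\|\bm\varphi\|^2\le \Big(\sum_{i,j}|v_{ij}|\Big)\|\bm\varphi\|^2,
\]
which is integrable because the $v_{ij}$ are in $L^1_{\rm loc}$. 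Since $C_c^\infty(\R^d;\C^m)$ is dense in $L^2(\R^d;\C^m)$, the form is densely defined.

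\emph{Closedness.} This is the main step. Let $(\uu_n)\subset D(\mathfrak a)$ be Cauchy for $\|\cdot\|_{D(\mathfrak a)}$. Then $(\uu_n)$ is Cauchy in $\textup{H}^1_{\a}(\R^d;\C^m)$, which is complete by the preceding lemma, so $\uu_n\to\uu$ there. Also, $(V^{\frac12}\uu_n)$ is Cauchy in $L^2(\R^d;\C^m)$ and converges to some $\bm g$. Passing to a subsequence with $\uu_n\to\uu$ a.e., we get $V^{\frac12}\uu_n\to V^{\frac12}\uu$ a.e., because $V^{\frac12}(x)$ is a fixed matrix at a.e.\ $x$. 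Hence $\bm g=V^{\frac12}\uu$. It follows that $\uu\in D(\mathfrak a)$ and $\|\uu_n-\uu\|_{D(\mathfrak a)}\to0$.

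The only delicate point is the completeness of $\textup{H}^1_{\a}$, which is granted by the lemma. It rests on the fact that $\nabla_{\a}$ is closed in the distributional sense. Indeed, if $f_n\to f$ in $L^2$, then $\a f_n\to\a f$ in $L^1_{\rm loc}$ because $\a\in L^2_{\rm loc}$, so $\nabla f_n\to\nabla f$ in distributions. Consequently, any $L^2$-limit of $\nabla_{\a}f_n$ must equal $\nabla_{\a}f$.
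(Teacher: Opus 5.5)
Your proposal is correct and follows the same route as the paper: you check density via $C_c^\infty(\R^d;\C^m)\subset D(\mathfrak a)$, get symmetry and accretivity from the symmetry and nonnegativity of $V$, get continuity from Cauchy--Schwarz, and prove closedness by the completeness of $\textup{H}^1_{\a}(\R^d;\C^m)$ and of $L^2(\R^d;\C^m)$. You also fill in two details the paper leaves implicit: the bound $\Lambda_V\le\sum_{i,j}|v_{ij}|$ showing that test functions lie in $D(\mathfrak a)$, and the a.e.-subsequence argument behind the paper's ``standard argument'' identifying $\bm g=V^{\frac12}\uu$.
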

\begin{proof}
It is immediate that $C_c^\infty(\R^d;\C^m)\subset D(\mathfrak{a})$. Hence, the form $\mathfrak{a}$ is densely defined. The symmetry and the accretiveness of the form are straightforward and follow from Hypothesis \ref{hyp-operatore}(ii), which ensures that the potential $V$ is real, symmetric and accretive.  Similarly, the continuity of the form $\mathfrak{a}$ is an easy consequence of the Cauchy-Schwarz inequality. 

To conclude the proof, we need to show that the form is closed. Let $(\f_n)_{n\in\N}$ be a Cauchy sequence in $(D(\mathfrak{a}),\|\cdot\|_{D(\mathfrak{a})})$. Then, 
$(\f_n)_{n\in\N}$ is a Cauchy sequence in $\textup{H}_{\a}^1(\R^d;\C^m)$ and $(V^{\frac{1}{2}}\f_n)_{n\in\N}$ is a Cauchy sequence in $L^2(\R^d;\C^m)$. Since both spaces are complete, 
there exist $\f\in \textup{H}_{\a}^1(\R^d;\C^m)$ and $\bm g\in L^2(\R^d;\C^m)$ such that $(\f_n)_{n\in\N}$ converges to $\f$ in $\textup{H}_{\a}^1(\R^d;\C^m)$ and $(V^{\frac12}\f_n)_{n\in\N}$ converges to $\bm g$ in $L^2(\R^d;\C^m)$, as $n$ tends to $\infty$. By a standard argument, we obtain that $\bm g=V^{\frac{1}{2}}\f$. Hence, $\f\in D(\mathfrak{a})$ and $\f_n$ converges to $\f$ in $D(\mathfrak{a})$ as $n$ tends to $\infty$.
\end{proof}

As an immediate consequence, the operator $\pmb{\mathscr{A}}$ admits a self-adjoint realization $\bm{A}$ on $L^2(\R^d;\C^m)$ which is associated to the sesquilinear form $\mathfrak{a}$ and the operator $-\bm{A}$ is the generator of a strongly continuous and analytic semigroup of contractions on $L^2(\R^d;\C^m)$ which we denote by $(e^{-t\bm{A}})_{t\ge0}$. Let us prove that the space $C_c^\infty(\R^d;\C^m)$ is a core for the form $\mathfrak{a}$.

\begin{prop}
The space $C_c^\infty(\R^d;\C^m)$ is dense in $D(\mathfrak{a})$.
\end{prop}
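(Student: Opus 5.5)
We prove density in three successive approximation steps: truncation at infinity, truncation of the values, and mollification. Each reduces to the previous class of functions, and since $D(\mathfrak{a})$ is a Hilbert space, it suffices to approximate a fixed $\uu\in D(\mathfrak{a})$ in the norm $\|\cdot\|_{D(\mathfrak{a})}$.

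\emph{Step 1 (compact support).} Let $\eta\in C_c^\infty(\R^d)$ with $0\le\eta\le1$, $\eta\equiv1$ on $B(1)$ and $\eta\equiv 0$ outside $B(2)$, and set $\eta_n(x)=\eta(x/n)$. Then $\eta_n\uu\in D(\mathfrak{a})$, and
\[
\nabla_{\a}(\eta_n u_j)=\eta_n\nabla_{\a}u_j+u_j\nabla\eta_n .
\]
The second term has $L^2$-norm at most $\|\nabla\eta\|_\infty n^{-1}\|u_j\|_2$. The first term, together with $V^{\frac12}(\eta_n\uu)=\eta_nV^{\frac12}\uu$, converges by dominated convergence. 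Hence compactly supported elements of $D(\mathfrak{a})$ are dense.

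\emph{Step 2 (boundedness).} For compactly supported $\uu$, truncate the values by setting
\[
\uu_k=\uu\,\min\{1,k/\|\uu\|\}=\uu\,\phi_k(\|\uu\|),
\]
where $\phi_k(s)=\min\{1,k/s\}$ is Lipschitz on $[0,\infty)$. By Lemma \ref{lem:first:diamagnetic}, $\|\uu\|\in \textup{H}^1(\R^d)$, so the chain rule gives
\[
\nabla_{\a}u_{k,j}=\phi_k(\|\uu\|)\nabla_{\a}u_j+u_j\phi_k'(\|\uu\|)\nabla\|\uu\| .
\]
The derivative $\phi_k'$ is supported where $\|\uu\|>k$, and there $|u_j\phi_k'(\|\uu\|)|\le k/\|\uu\|\le1$. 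Using \eqref{eq:diamagnetic:01}, both terms are dominated by a multiple of $\big(\sum_{j}\|\nabla_\a u_j\|^2\big)^{1/2}$. The error terms vanish outside $\{\|\uu\|>k\}$, whose measure tends to $0$, so $\uu_k\to\uu$ in $D(\mathfrak{a})$ by dominated convergence. The same applies to $V^{\frac12}\uu_k=\phi_k(\|\uu\|)V^{\frac12}\uu$. Consequently we may assume $\uu\in L^\infty$ with compact support. Then $\nabla u_j=\nabla_\a u_j+i\a u_j\in L^2$ because $\a\in L^2_{\rm loc}$, so $\uu\in \textup{H}^1(\R^d;\C^m)$.

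\emph{Step 3 (mollification).} Let $\uu_\eps=\rho_\eps*\uu$ with a standard mollifier $\rho_\eps$. These functions are smooth, supported in a fixed compact set $K$, and satisfy $\|\uu_\eps\|_\infty\le\|\uu\|_\infty$. Moreover $\nabla\uu_\eps\to\nabla\uu$ in $L^2$, and along a subsequence $\uu_\eps\to\uu$ almost everywhere. For the magnetic part we write
\[
\a\uu_\eps-\a\uu\to0 \quad\text{in } L^2,
\]
which follows from dominated convergence with the bound $2\|\uu\|_\infty|\a|\mathds 1_K\in L^2$. Hence $\nabla_\a\uu_\eps\to\nabla_\a\uu$.

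The main obstacle is the potential term $\int\langle V(\uu_\eps-\uu),\uu_\eps-\uu\rangle\,dx$, because $V$ is only $L^1_{\rm loc}$ and mollification does not commute with $V$. We handle it through the pointwise bound
\[
\langle V(\uu_\eps-\uu),\uu_\eps-\uu\rangle\le 4\|\uu\|_\infty^2\,\Lambda_V\mathds 1_K ,
\]
where $\Lambda_V\le\sum_{i,j}|v_{ij}|\in L^1_{\rm loc}$. The integrand converges to $0$ almost everywhere along the subsequence, so dominated convergence gives $V^{\frac12}\uu_\eps\to V^{\frac12}\uu$ in $L^2$. This is exactly why Step 2 is needed: only for bounded $\uu$ with compact support is the domination by $\Lambda_V\mathds 1_K$ available. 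Combining the three steps proves that $C_c^\infty(\R^d;\C^m)$ is dense in $D(\mathfrak{a})$.
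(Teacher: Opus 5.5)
Your proposal is correct and follows the paper's proof essentially step by step: a cutoff at infinity, a truncation of the values via $\|\uu\|$, and mollification with dominated convergence for both the magnetic and the potential terms. The only difference is in Step 2, where you use the radial projection $\min\{1,k/\|\uu\|\}$ and the paper uses a smooth cutoff $g(\|\f\|/n)$; this is inessential, and your Step 3 treats the term $\a(\uu_\eps-\uu)$ more explicitly than the paper does.
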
	

\begin{proof}
We have already noticed that $C_c^\infty(\R^d;\C^m)$ is a subset of $D(\mathfrak{a})$. To prove density, we split the proof into three steps. Preliminarily, we note that 
$\|\f\|_{D(\a)}^2=\|\f\|_{\textup{H}^1_{\a}(\R^d;\C^m)}^2+\| V^{\frac{1}{2}}\f\|_{L^2(\R^d;\C^m)}^2$ for every $\f\in D(\mathfrak a)$.
			
\textit{Step 1.} Here, we prove that the functions of $D(\mathfrak{a})$ with compact support are dense in $D(\mathfrak{a})$. Let $\f\in D(\mathfrak{a})$ and let $\ph\in C_c^\infty(\R^d)$ be a function such that $\mathds{1}_{B(1)}\le\ph\le\mathds{1}_{B(2)}$ and $|\nabla \ph|\leq 2$ on $\R^d$. For $x\in\R^d$ and $n\in\N$, we set $\f_n(x) \coloneqq\ph(x/n)\f(x)$. Since $\ph$ is bounded with bounded gradient, it follows that $\f_n\in D(\mathfrak{a})$. Clearly, $(\f_n)_{n\in\N}$ converges to $\f$ in $L^2(\R^d;\C^m)$ and 
$V^{\frac{1}{2}}(\f_n-\f)$ converges to zero in $L^2(\R^d;\C^m)$. To conclude that $\f_n$ converges to $\f$ in $D(\mathfrak a)$ it suffices to show that the diamagnetic gradient of each component of $\f_n$ converges to the diamagnetic gradient of the corresponding component of $\f$ in $L^2(\R^d;\C^d)$. For this purpose, it suffices to observe that
$(\nabla_{\a}f_{n,j})(x)=\varphi(x/n)(\nabla_{\a} f_j)(x)+n^{-1}\nabla\varphi(x/n)f_j(x)$ for almost every $x\in\R^d$, every $n\in\N$ and $j\in\{1,\ldots,m\}$ so that
\begin{eqnarray*}
\|\nabla_{\a}f_{n,j}-\nabla_{\a}f_j\|_2^2\le 2\int_{\R^d}(1-\varphi(x/n))^2|(\nabla_{\a} f_j)(x)|^2dx+4n^{-2}\|f_j\|_2^2,
\end{eqnarray*}
from which the convergence of $\nabla_{\a}f_{n,j}$ to $\nabla_{\a}f_j$ in $L^2(\R^d;\C^d)$ follows immediately.
			
\textit{Step 2.} Let $\f\in D(\mathfrak{a})$. By Step 1 we can suppose that $\f$ has compact support. Consider a decreasing function $g\in C_c^\infty([0,\infty))$ such that $\mathds{1}_{[0,1]}\le g\le \mathds{1}_{[0,2]}$. 
For $n\in\N$ and $t\in [0,\infty)$, we set $g_n(t)=g(t/n)$ and define $\f_n=\f g_n(\|\f\|)$, which is a bounded function with compact support.
            
By dominated convergence, the sequences $(\f_n)_{n\in\N}$ and
$(V^{\frac{1}{2}}(\f_n-\f))_{n\in\N}$ converge to $\f$ and zero, respectively, in $L^2(\R^d;\C^m)$.

Next, let us fix $j\in\{1,\dots,m\}$ and notice that
$\nabla_{\a} f_{n,j} =  g_n(\|\f\|)\nabla_{\a} f_j + f_j g_n'(\|\f\|)\nabla\|\f\|$ for every $n\in\N$. 
Since 
$|f_j g_n'(\|\f\|)|\le 2\|g'\|_{\infty}$ for every $n\in\N$,
the dominated convergence and Lemma \ref{lem:first:diamagnetic} imply that $\nabla_{\a}f_{n,j}$ converges to
$\nabla_{\a}f_j$ in $L^2(\R^d;\C^d)$ as $n$ tends to $\infty$. We have thus proved that $(\f_n)_{n\in\N}$ converges to $\f$ in $D(\mathfrak a)$ as $n$ tends to $\infty$ and, consequently, bounded functions in $D(\mathfrak{a})$ with compact support are dense in $D(\mathfrak{a})$.
			
\textit{Step 3.} Let $\f\in D(\mathfrak{a})$ be a bounded function with compact support. Since $\nabla_{\a}f_j\in L^2(\R^d;\C^d)$, so is $\a f_j$ for every $j\in\{1,\dots,m\}$, which means that $\f\in \textup{H}^1(\R^d;\C^m)$. Let $\vartheta\in C_c^\infty(\R^d)$ be a positive function such that $\|\vartheta\|_{L^1(\R^d)}=1$ and for every $n\in\N$ we consider the function $\vartheta_{n}$ defined by $\vartheta_n(x)=n^{d}\vartheta(nx)$ for every $x\in\R^d$ and introduce the vector-valued function $\f_{n}=(f_1\star\vartheta_{n},\dots,f_m\star\vartheta_{n})$,
where $\star$ stands for convolution.
Clearly, each function $\f_{n}$ belongs to $C^\infty_c(\R^d;\C^m)$ and converges to $\f$ in $\textup{H}^1(\R^d;\C^m)$. Since 
there exists a compact set $K$ which contains the support of all the functions $\f_n$, it follows that $\f_n$ converges to $\f$ also in $\textup{H}^1_{\a}(\R^d;\C^m)$. 

Finally, $\f_n$ converges to $\f$ pointwise almost everywhere in $\R^d$ and the sequence $(\f_n)_{n\in\N}$ is bounded with ${\rm supp}(\f_n)\subset K$ for every $n\in\N$, again by dominated convergence we deduce that
$V^{\frac{1}{2}}(\f_n-\f)$ converges to zero in $L^2(\R^d;\C^m)$ as $n$ tends to $\infty$.
\end{proof}

\section{The  diamagnetic inequality for semigroups and consequences}
\label{sec:diam:sg}
In this section, our aim is to prove a diamagnetic inequality
and some of its consequences.
For this purpose, we introduce the form $\mathfrak{h}$ associated to the scalar operator $-\Delta+\lambda_V$ in $L^2(\R^d;\C)$, which is defined by
\begin{equation*}
\mathfrak{h}(u,v)=\int_{\R^d}\langle \D u(x),\D v(x)\rangle dx +\int_{\R^d} \lambda_V(x) u(x)\overline{v(x)}dx,\qquad u,v\in	D(\mathfrak{h}),
\end{equation*}
where $D(\mathfrak{h})\coloneqq\{u\in \textup{H}^1(\R^d;\C)\colon \sqrt{\lambda_V}u\in L^2(\R^d;\C)\}$. We recall that $\lambda_V\ge0$ and $\lambda_V\in L^1_{\rm loc}(\R^d)$ by Hypothesis \ref{hyp-operatore}(ii). Since the form is densely defined, accretive, symmetric and closed, the operator $H$, associated to the form $\mathfrak{h}$, is a realization in $L^2(\R^d;\C)$ of the operator $-\Delta+\lambda_V$  and $-H$ generates a positive strongly continuous analytic semigroup of contractions, which we denote by $(e^{-tH})_{t\ge0}$. It is well-known that $(e^{-tH})_{t\ge0}$ extends to a strongly continuous semigroup $(e^{-tH_p})_{t\ge0}$ on $L^p(\R^d;\C)$ ($p\in [1,\infty)$) with infinitesimal generator $-H_p$. The semigroups are consistent and, for every $f\in L^p(\R^d;\C)\cap L^2(\R^d;\C)$ and $\varepsilon>0$, $H_p(\varepsilon+H_p)^{-1}f=-\Delta (\varepsilon+H_p)^{-1}f+\lambda_V (\varepsilon+H_p)^{-1}f\in L^p(\R^d;\C)\cap L^2(\R^d;\C)$.

We can now prove the fundamental diamagnetic inequality.

\begin{theo}
\label{theo-ouh}
For every $\uu\in L^2(\R^d;\C^m)$ the diamagnetic inequality
\begin{equation}\label{eq:diamagnetic:inequality}
\|(e^{-t{\bm A}}\uu)(x)\|\le (e^{-tH}\|\uu\|)(x),\qquad\;\,t>0,\;\,\textit{a.e.}\ x\in\R^d, 
\end{equation}
holds true.
\end{theo}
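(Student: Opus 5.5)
We will use the Ouhabaz domination criterion for vector-valued forms (\cite{Ouhabaz99}; see also \cite[Theorem 2.30]{Ouhabaz}). It applies because $(e^{-tH})_{t\ge0}$ is positive and both forms $\mathfrak a$, $\mathfrak h$ are densely defined, accretive, symmetric and closed (Proposition \ref{prop:prop:a} and the discussion above). The criterion says that $\|e^{-t\bm A}\uu\|\le e^{-tH}\|\uu\|$ for all $t>0$ and all $\uu$ if and only if:
\begin{enumerate}[\rm (1)]
\item $\uu\in D(\mathfrak a)$ implies $\|\uu\|\in D(\mathfrak h)$;
\item for all $\uu\in D(\mathfrak a)$ and all $0\le \varphi\in D(\mathfrak h)$ with $\varphi\,{\rm sign}(\uu)\in D(\mathfrak a)$,
\begin{equation*}
\re\,\mathfrak a(\uu,\varphi\,{\rm sign}(\uu))\ge \mathfrak h(\|\uu\|,\varphi).
\end{equation*}
\end{enumerate}
Part of the statement is that $D(\mathfrak a)$ is an ideal of $D(\mathfrak h)$ in the vector-valued sense. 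Because $\mathfrak a$ and $\mathfrak h$ are symmetric, condition (2) alone is the needed inequality.

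\emph{Step 1: condition (1).} Let $\uu\in D(\mathfrak a)$. Lemma \ref{lem:first:diamagnetic} gives $\|\uu\|\in \textup{H}^1(\R^d)$. Since $\lambda_V\|\uu\|^2\le\langle V\uu,\uu\rangle=\|V^{1/2}\uu\|^2$ pointwise, we also get $\sqrt{\lambda_V}\|\uu\|\in L^2$. Hence $\|\uu\|\in D(\mathfrak h)$.

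\emph{Step 2: condition (2).} Fix $\uu$ and $\varphi$ as in (2) and set $\bm w=\varphi\,{\rm sign}(\uu)$.

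The potential term is handled pointwise on $\{\uu\neq0\}$:
\begin{equation*}
\langle V\uu,\bm w\rangle=\varphi\|\uu\|^{-1}\langle V\uu,\uu\rangle\ge \lambda_V\|\uu\|\varphi.
\end{equation*}

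For the kinetic term, on $\{\uu\ne0\}$ we compute
\begin{equation*}
\nabla_{\a}w_j=\frac{\nabla\varphi\, u_j}{\|\uu\|}+\varphi\frac{\nabla_{\a}u_j}{\|\uu\|}-\varphi\frac{u_j\nabla\|\uu\|}{\|\uu\|^2}.
\end{equation*}
Summing $\re\langle\nabla_{\a}u_j,\nabla_{\a}w_j\rangle$ over $j$ and using \eqref{eq:grad:norma} in the form $\sum_j\re(\overline{u_j}\nabla_{\a}u_j)=\|\uu\|\nabla\|\uu\|$, we obtain
\begin{equation*}
\nabla\|\uu\|\cdot\nabla\varphi+\frac{\varphi}{\|\uu\|}\Big(\sum_j\|\nabla_{\a}u_j\|^2-\|\nabla\|\uu\|\|^2\Big).
\end{equation*}
The bracket is nonnegative by \eqref{eq:diamagnetic:01}, which gives the desired inequality. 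On $\{\uu=0\}$ both sides vanish, since $\nabla\|\uu\|=0$ a.e. there and $\bm w=0$.

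\emph{Main obstacle.} The delicate point is justifying the product and chain rules for $\varphi\,{\rm sign}(\uu)$ when $\a$ is only $L^2_{\rm loc}$ and $\uu$ is merely in $D(\mathfrak a)$. In particular, $\nabla_{\a}w_j$ must be given by the formula above, and the singularity at $\uu=0$ must be controlled. To do this we will regularize: replace ${\rm sign}(\uu)$ by $\uu(\|\uu\|^2+\varepsilon^2)^{-1/2}$, reuse the $W^{1,1}_{\rm loc}$ calculus from the proof of Lemma \ref{lem:first:diamagnetic}, and pass to the limit $\varepsilon\to0^+$. If needed, we first reduce to bounded, compactly supported $\uu$ by the density argument of the previous proposition. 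The limit is taken by dominated convergence, using that $\varphi\|\nabla_{\a}\uu\|/\|\uu\|$ is integrable on $\{\uu\ne0\}$ whenever $\bm w\in D(\mathfrak a)$.
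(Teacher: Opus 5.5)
\textbf{There is a genuine gap: you never prove that $|f|\operatorname{sign}(\uu)\in D(\mathfrak a)$.}

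Your overall route is the paper's. You use the criterion of \cite[Theorem 2.30]{Ouhabaz}, check $\|\uu\|\in D(\mathfrak h)$ via Lemma~\ref{lem:first:diamagnetic} and $\lambda_V\|\uu\|^2\le\langle V\uu,\uu\rangle$, and do the same pointwise computation based on \eqref{eq:grad:norma} and \eqref{eq:diamagnetic:01}.

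The criterion requires that $D(\mathfrak a)$ be an ideal of $D(\mathfrak h)$. Besides your condition (1), this means $|f|\operatorname{sign}(\uu)\in D(\mathfrak a)$ for all $\uu\in D(\mathfrak a)$ and $f\in D(\mathfrak h)$ with $|f|\le\|\uu\|$. You mention the ideal property but only verify its first half. Your condition (2) quantifies only over those $\varphi$ for which $\varphi\operatorname{sign}(\uu)\in D(\mathfrak a)$ is already known, and it drops the constraint $\varphi\le\|\uu\|$. Symmetry of $\mathfrak a$ and $\mathfrak h$ does not remove this requirement. Domination is equivalent to the ideal property together with the inequality, so the ideal property is necessary and must be proved.

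\textbf{Your plan for the technical step does not supply it.} It rests on the claim that $\varphi\|\nabla_{\a}\uu\|/\|\uu\|$ is integrable whenever $\bm w\in D(\mathfrak a)$. This is circular, since $\bm w\in D(\mathfrak a)$ is exactly what has to be shown. It is also false. Take $d=m=1$, $\a=\bm 0$, $V=0$, $u(x)=|x|^{3/4}e^{-x^2}\in\textup{H}^1(\R)$, and $\varphi\in C^\infty_c(\R)$ equal to $1$ near $0$. Then $\bm w=\varphi\operatorname{sign}(u)=\varphi\in D(\mathfrak a)$, yet $\varphi|u'|/|u|$ behaves like $\frac34|x|^{-1}$ near $0$ and is not integrable. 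Membership of $\bm w$ in $D(\mathfrak a)$ only controls $\nabla\varphi$ and $\varphi\|\uu\|^{-1}\big(\sum_j\|\nabla_{\a}u_j\|^2-\|\nabla\|\uu\|\|^2\big)^{1/2}$; it does not control $\varphi\|\nabla\|\uu\|\|/\|\uu\|$.

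\textbf{The missing idea is to use the bound $0\le\varphi\le\|\uu\|$ that the criterion gives you.} For $\bm g_\varepsilon=\varphi\uu/(\|\uu\|+\varepsilon)$, or your regularization $\varphi\uu(\|\uu\|^2+\varepsilon^2)^{-1/2}$, every term of $\nabla_{\a}g_{\varepsilon,j}$ carries a scalar factor bounded by $1$. Hence
\[
\|\nabla_{\a}g_{\varepsilon,j}\|\le\|\nabla\varphi\|+\|\nabla_{\a}u_j\|+\|\nabla\|\uu\|\|\in L^2(\R^d),
\]
and on $\{\uu=\bm 0\}$ everything vanishes because $\varphi=0$ there. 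Dominated convergence and the closedness of $\nabla_{\a}$ then give $\bm w\in\textup{H}^1_{\a}(\R^d;\C^m)$, together with your formula for $\nabla_{\a}w_j$. The bound $\|V^{1/2}\bm w\|^2\le\langle V\uu,\uu\rangle$ then gives $\bm w\in D(\mathfrak a)$. This is what the paper does, and once it is in place your computation finishes the proof.
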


\begin{proof}
This domination is based on an 
abstract criterion for forms (see \cite[Theorem 2.30]{Ouhabaz}). To apply such a criterion, we need to verify the following properties:
\begin{enumerate}[\rm (i)]
\item
if ${\uu}\in D(\mathfrak{a})$, then $\|\uu\|\in D(\mathfrak{h})$;
\item
if ${\uu}\in D(\mathfrak{a})$ and $f\in D(\mathfrak{h})$ is such that $|f|\le\|\uu\|$, then $|f|\sign{\uu}\in D(\mathfrak{a})$ and
\begin{equation}\label{eq:form:inequality}
		\re\mathfrak{a}({\uu},|f|\sign{\uu}) \ge \mathfrak{h}(\|\uu\|,|f|).
\end{equation}
\end{enumerate}

We begin with property (i). Fix $\uu\in D(\mathfrak{a})$ and recall that $\|\uu\|\in \textup{H}^1(\R^d)$ by Lemma \ref{lem:first:diamagnetic}. In addition, since
\begin{equation*}
\int_{\R^d}\lambda_V\|\uu\|^2 dx \le \int_{\R^d} \langle V\uu,\uu\rangle dx < \infty,
\end{equation*}
the function $\sqrt{\lambda_V}\|\uu\|$ belongs to $L^2(\R^d)$ and we conclude that $\|\uu\|\in D(\mathfrak{h})$.

Next, we prove property (ii). For this purpose, fix ${\uu}\in D(\mathfrak{a})$ and $f\in D(\mathfrak{h})$ such that $|f|\le\|\uu\|$, and set 
$\displaystyle\bm{g}_{\varepsilon}\coloneqq|f|\frac{\uu}{\norm{\uu}+\eps}$ for every $\varepsilon>0$.

Note that $\bm{g}_{\varepsilon}$ converges to $|f|\sign{\uu}$ almost everywhere in $\R^d$ as $\varepsilon$ tends to zero. Moreover, $\|\bm{g}_{\varepsilon}\|\le|f|$ for every $\varepsilon>0$. Hence, $\bm{g}_{\varepsilon}$ converges to $|f|\sign{\uu}$ in $L^2(\R^d;\C^m)$ as $\eps$ tends to $0$.
	
Next, we observe that, for every  $j\in\{1,\dots,m\}$, the weak magnetic gradient of ${g}_{\varepsilon,j}$ is given by
\begin{align*}
\nabla_{\a}g_{\varepsilon,j} 
= & \nabla g_{\varepsilon,j} - i\a  g_{\varepsilon,j}\\ 
=& \frac{u_j}{\|\uu\|+\eps}\D|f| + |f|\left(\frac1{\|\uu\|+\eps}\D_{\a} u_j - \frac1{(\|\uu\|+\eps)^2}u_j\D\|\uu\|\right).
\end{align*}

It can be easily checked that, by dominated convergence $\nabla g_{\varepsilon,j}$ converges in $L^2(\R^d;\C^m)$, so that $|f|{\rm sign}(\bm u)$ belongs to $\textup{H}^1_{\a}(\R^d;\C^m)$ and 
\begin{align*}
\nabla_{\a}(|f|{\rm sign}(\bm u))_j
= \bigg ( \frac{u_j}{\|\uu\|}\D|f| + \frac{|f|}{\|\uu\|}\D_{\a} u_j - \frac{|f|}{\|\uu\|^2}u_j\D\|\uu\|\bigg ){\mathds 1}_{\{\uu\ne \bm{0}\}}, \qquad j\in\{1,\ldots,m\}.
\end{align*}	
To prove that $|f|\sign{\uu}\in D(\mathfrak{a})$ it remains to show that $V^{\frac12}|f|\sign{\uu}$ belongs to $L^2(\R^d;\C^m)$, but this follows immediately observing that
$\|V^{\frac12}|f|{\rm sign}(\uu)\|^2\le \langle V\uu,\uu\rangle$ almost everywhere in $\R^d$.

To complete the proof, we need to show the inequality \eqref{eq:form:inequality}.
For this aim, we fix $({\uu},f)\in D(\mathfrak{a})\times D(\mathfrak{h})$ such that $|f|\le\|\uu\|$ and we start with rewriting the magnetic gradient part
\begin{align*}
&\langle\D_{\a} u_j,\D_{\a}(|f|\sign{\uu})_j\rangle\\
=& \frac{\overline{u_j}}{\|\uu\|}{\mathds 1}_{\{\uu\ne \bf{0}\}}\langle\D_{\a} u_j, \D|f|\rangle + \frac{|f|}{\|\uu\|}{\mathds 1}_{\{\uu\ne \bf{0}\}}\langle\D_{\a} u_j,\D_{\a} u_j\rangle -\frac{|f|}{\|\uu\|^2}\overline{u_j}{\mathds 1}_{\{\uu\ne \bf{0}\}}\langle\D_{\a} u_j, \D\|\uu\|\rangle\\
=&\frac{\overline{u_j}}{\|\uu\|}{\mathds 1}_{\{\uu\ne \bf{0}\}}\langle\D u_j, \D|f|\rangle -i \frac{|u_j|^2}{\|\uu\|}\langle {\a}, \D|f|\rangle + \frac{|f|}{\|\uu\|}{\mathds 1}_{\{\uu\ne \bf{0}\}}\langle\nabla_{\a} u_j,\D_{\a} u_j\rangle\\
&-\frac{|f|}{\|\uu\|^2}\overline{u_j}{\mathds 1}_{\{\uu\ne \bf{0}\}}\langle\D u_j, \D\|\uu\|\rangle+ i\frac{|f|}{\|\uu\|^2}|u_j|^2{\mathds 1}_{\{\uu\ne \bf{0}\}}\langle \bm{a},\D\|\uu\|\rangle.
\end{align*}

From this, taking the real part and summing up from $1$ to $m$, taking \eqref{eq:grad:norma} into account we get
\begin{align*}
&\sum_{j=1}^m\re(\langle\D_{\a} u_j,\D_{\a}(|f|\sign{\uu})_j)\rangle\\
=&\frac{1}{\|\uu\|}\sum_{j=1}^m \langle\re(\overline{u_j}\D u_j), \D|f|\rangle {\mathds 1}_{\{\uu\ne \bf{0}\}}\\
&+\frac{|f|}{\|\uu\|}\bigg( \sum_{j=1}^m\langle\D_{\a} u_j,\D_{\a} u_j\rangle -\frac{1}{\|\uu\|}\sum_{j=1}^m\langle\re(\overline{u_j}\D u_j), \D\|\uu\|\rangle\bigg){\mathds 1}_{\{\uu\ne \bm{0}\}}\\
=&\langle\nabla\|\uu\|,\nabla|f|\rangle 
+\frac{|f|}{\|\uu\|}\bigg( \sum_{j=1}^m\langle\D_{\a} u_j,\D_{\a} u_j\rangle -\langle\D\norm{\bf u}, \D\norm{\bf u}\rangle\bigg)
{\mathds 1}_{\{\uu\ne \bf{0}\}}.
\end{align*}
At this point, from \eqref{eq:diamagnetic:01} and the definition of $\lambda_V$, one obtains
\begin{align}
\re\mathfrak{a}({\uu},|f|\sign{\uu})=&\int_{\R^d}\re\bigg (\sum_{j=1}^m\langle\D_{\a} u_j,\D_{\a}(|f|\sign{\uu})_j\rangle+\langle V{\uu},|f|\sign{\uu}\rangle\bigg )dx\notag
\\
=&\int_{\R^d}\langle\nabla\|\uu\|,\nabla|f|\rangle dx
+\int_{\R^d}\frac{|f|}{\|\uu\|}\langle V\uu,\uu\rangle {\mathds 1}_{\{\uu\ne \bf{0}\}} dx\notag\\
&+\int_{\R^d}\frac{|f|}{\|\uu\|}\bigg( \sum_{j=1}^m|\D_{\a} u_j|^2 -\langle\D\norm{\bf u}, \D\norm{\bf u}\rangle\bigg){\mathds 1}_{\{\uu\ne \bf{0}\}}dx\notag\\
\ge& \int_{\R^d}\langle\nabla\|\uu\|,\nabla|f|\rangle dx+\int_{\R^d}\lambda_V\|\uu\||f|dx = \mathfrak{h}(\|\uu\|,|f|),
\label{turner}
\end{align}
which is \eqref{eq:form:inequality}.
\end{proof}

\begin{rem}
\begin{enumerate}[\rm (i)]
\item 
The proof of Theorem 3.1 also shows that
\begin{equation} 
\|e^{-t\bm{A}}\bm u\|\leq e^{t\Delta}\|\bm u\|,\qquad\;\,t>0,
\label{weak-diamagnetic}
\end{equation}
almost everywhere in $\R^d$ for every $\uu\in L^2(\R^d;\C^m)$, where $(e^{t\Delta})_{t\ge 0}$ denotes the Gauss-Weierstrass semigroup.
\item 
Theorem \ref{theo-ouh}, applied in the case of the vector-valued pure electric Schr\"odinger operator $-\bm\Delta + V$, i.e., when $\a$ identically vanishes in $\R^d$, improves the weaker estimate \cite[Formula (5.44)]{ALLR24}.
\end{enumerate}
\end{rem}

An important consequence of \eqref{eq:diamagnetic:inequality} is the following generation result. We stress that the extension of $(e^{-t\bm{A}})_{t\geq0}$ to a strongly continuous semigroup in $L^p(\R^d;\C^m)$, $p\in[1,\infty)$, is a standard result for $L^\infty$-contractive semigroups (see, for instance, \cite[Section 2.2]{Ouhabaz}). 

\begin{prop}\label{prop:smgr_L1}
The following properties hold.
\begin{enumerate}[{\rm(i)}]
\item 
The semigroup $(e^{-t\bm{A}})_{t\geq0}$ admits a kernel representation, i.e., for every $t>0$ there exists a family of kernels $(p_{hk}(t,\cdot,\cdot))_{1\leq h,k\leq m}\subset L^\infty(\R^d\times \R^d;\C)$ such that for every $\bm u\in L^1(\R^d;\C^m)\cap L^2(\R^d;\C^m)$,
\begin{align}
\label{kernel_representation}
((e^{-t\bm{A}}\bm u)(x))_h=\sum_{k=1}^m\int_{\R^d}p_{hk}(t,x,y)u_k(y)dy, \qquad\;\, h\in\{1,\ldots,m\},
\end{align}
for every $t>0$ and almost every $x\in\R^d$. Moreover, 
\begin{align}
\label{stima_nuclei_gauss}
|p_{hk}(t,x,y)|\leq \frac{1}{(4\pi t)^{\frac d2}}\exp\left(-\frac{|x-y|^2}{4t}\right), \qquad\;\, h,k\in\{1,\ldots,m\},
\end{align}
for every $(t,x,y)\in(0,\infty)\times \R^d\times \R^d$, which implies that \eqref{kernel_representation} extends to every $\bm u\in  L^2(\R^d;\C^m)$.
\item The semigroup $(e^{-t\bm{A}})_{t\geq0}$ extends to a strongly continuous semigroup of contractions $(e^{-t\bm{A}_p})$ on $L^p(\R^d;\C^m)$ for every $p\in[1,\infty)$. All these semigroups are consistent in the sense that they coincide on $L^2(\R^d;\C^m)\cap L^p(\R^d;\C^m)$. In particular, for every $\bm u\in L^p(\R^d;\C^m)\cap L^2(\R^d;\C^m)$, it holds that
\begin{align*}
\|(e^{-t\bm{A}_p}\bm u)(x)\|\leq (e^{-tH_p}\|\bm u\|)(x), \qquad\;\, \textit{a.e.}~x\in\R^d.    
\end{align*}
\item 
For every $p\in[1,\infty)$ and every $\bm f\in L^p(\R^d;\C^m)$, the map $t\mapsto e^{-t\bm{A}_p}\bm f$ belongs to $ C((0,\infty);L^\infty(\R^d;\C^m))$.
\item 
For every $p\in[1,\infty)$, the set $(1+\bm{A}_p)^{-1}(C^{\infty}_c(\R^d;\C^m))$ is dense in $L^p(\R^d;\C^m)$ and in $D(\bm{A}_p)$, endowed with the graph norm.
\item 
If $\bm f\in (1+\bm{A}_p)^{-1}(C^{\infty}_c(\R^d;\C^m))$ for some $p\in[1,\infty)$, then $\bm{A}_p\bm f=-\bm\Delta_\a\bm f +V\bm f\in L^p(\R^d;\C^m)$, where $\bm\Delta_\a\bm f +V\bm f$ is meant in the sense of distributions.
\end{enumerate}
\end{prop}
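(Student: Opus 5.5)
The plan is to derive everything from the diamagnetic inequality \eqref{weak-diamagnetic}, i.e.\ $\|e^{-t\bm A}\uu\|\le e^{t\Delta}\|\uu\|$, combined with the Dunford--Pettis theorem and standard approximation and duality arguments.

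For (i), fix $t>0$. By \eqref{weak-diamagnetic}, for $\uu\in L^1\cap L^2$ we have
\[
\|e^{-t\bm A}\uu\|_\infty\le \|e^{t\Delta}\|\uu\|\|_\infty\le (4\pi t)^{-d/2}\|\uu\|_1 .
\]
Hence each scalar map $u\mapsto (e^{-t\bm A}(u\,\bm e_k))_h$ extends to a bounded operator $L^1(\R^d;\C)\to L^\infty(\R^d;\C)$. By the Dunford--Pettis theorem it is therefore given by a kernel $p_{hk}(t,\cdot,\cdot)\in L^\infty$, which yields \eqref{kernel_representation}.

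For the pointwise Gaussian bound, take $u=\mathds 1_{B(y,r)}\bm e_k\cdot\phi$ with $|\phi|=1$ chosen to align phases. Then
\[
\Big|\int p_{hk}(t,x,z)\phi(z)\mathds 1_{B(y,r)}(z)\,dz\Big|\le \int G_t(x-z)\mathds 1_{B(y,r)}(z)\,dz,
\]
and testing also against $\mathds 1_{B(x,r)}$ gives $\Big|\int_{B(x,r)}\int_{B(y,r)} p_{hk}\Big|\le \int_{B(x,r)}\int_{B(y,r)}G_t$. More cleanly, one shows $|\langle e^{-t\bm A}(f\bm e_k),g\bm e_h\rangle|\le\langle e^{t\Delta}|f|,|g|\rangle$ for all scalar $f,g$. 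This gives $|p_{hk}(t,x,y)|\le G_t(x-y)$ at Lebesgue points of the pair, hence almost everywhere. Since the bound is a.e., one may redefine $p_{hk}$ on a null set so that \eqref{stima_nuclei_gauss} holds everywhere. The extension to $\uu\in L^2$ follows because the Gaussian is integrable, so the integral converges absolutely, and by density plus $L^2$-continuity of both sides.

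For (ii), \eqref{eq:diamagnetic:inequality} together with $L^p$-contractivity of $e^{-tH}$ (sub-Markovian, since $\lambda_V\ge0$) gives
\[
\|e^{-t\bm A}\uu\|_p\le\|\uu\|_p\quad\text{on } L^2\cap L^p .
\]
Extension by density gives consistent contractive semigroups. Strong continuity for $p\in[1,\infty)$ is the standard argument of \cite[Section 2.2]{Ouhabaz}: weak continuity on a dense set plus uniform boundedness, or interpolation between $L^1$-boundedness and $L^2$ strong continuity for $p>1$. The case $p=1$ is the delicate one. I would use the domination: $\|e^{-t\bm A}\uu-\uu\|$ is controlled on compactly supported bounded $\uu$ via the $L^2$ convergence on a ball, plus a tail bound from $e^{-tH}\|\uu\|\le e^{t\Delta}\|\uu\|$, whose mass outside large balls is uniformly small. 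The pointwise domination then passes to the limit.

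For (iii), write $e^{-t\bm A_p}\f=e^{-(t/2)\bm A_p}e^{-(t/2)\bm A_p}\f$. The outer operator maps $L^p\to L^\infty$ with norm $\le c\,t^{-d/(2p)}$, by the kernel bound and Young's inequality. Continuity in $L^\infty$ at $t_0$ then follows from
\[
\|e^{-t\bm A}\f-e^{-t_0\bm A}\f\|_\infty\le c\,(t_0/4)^{-d/(2p)}\|e^{-(t-t_0/2)\bm A}\f-e^{-(t_0/2)\bm A}\f\|_p\to0
\]
by strong continuity in $L^p$.

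For (iv), $C_c^\infty$ is dense in $L^p$ and $(1+\bm A_p)^{-1}$ is an isomorphism of $L^p$ onto $D(\bm A_p)$ with the graph norm. This gives density in $D(\bm A_p)$, and density in $L^p$ follows since $D(\bm A_p)$ is dense.

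For (v), let $\f=(1+\bm A_p)^{-1}\g$ with $\g\in C_c^\infty\subset L^2\cap L^p$. By consistency of resolvents (Laplace transforms of consistent semigroups), $\f=(1+\bm A)^{-1}\g\in D(\bm A)$. The form identity $\mathfrak a(\f,\bm\varphi)=\langle \g-\f,\bm\varphi\rangle$ for $\bm\varphi\in C_c^\infty$, together with $\a\in L^2_{\rm loc}$ and $V\in L^1_{\rm loc}$, shows that $-\bm\Delta_\a\f+V\f=\g-\f$ in distributions. Here one must check that $V\f\in L^1_{\rm loc}$ (which holds since $V^{1/2}\f\in L^2$ and $V^{1/2}\in L^2_{\rm loc}$) and that the magnetic terms are distributions. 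Hence $\bm A_p\f=\g-\f$ equals the distributional expression.

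The main obstacle is the pointwise kernel bound in (i) for genuinely complex off-diagonal kernels together with the $L^1$ strong continuity in (ii). For the latter I would first try the tail-domination argument sketched above.
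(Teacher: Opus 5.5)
Your proposal is correct and follows essentially the same route as the paper: ultracontractivity from \eqref{weak-diamagnetic}, Dunford--Pettis, a.e.\ domination of the kernels by the Gauss--Weierstrass kernel followed by a modification on a null set, extension and consistency by density, and consistency of resolvents for (iv)--(v). The differences are minor. You prove the kernel domination by a Lebesgue-differentiation argument rather than citing Meyer-Nieberg, and you give an explicit Gaussian-tail (tightness) argument for strong continuity on $L^1$ instead of referring to the scalar case in Ouhabaz. In (iii) you factor through $e^{-(t_0/2)\bm A_p}\colon L^p\to L^\infty$ and use $L^p$ strong continuity directly, which avoids the paper's density step via $C_c^\infty$ and $L^1$ continuity.
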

\begin{proof}
(i) can be proved arguing as in the proof of \cite[Proposition 4.3]{ALLR-2}, by simply noticing that $(e^{-t\bm{A}})_{t\geq0}$ is ultracontractive by \eqref{weak-diamagnetic}. This implies that we can apply Dunford-Pettis theorem to infer that, for every $t>0$, there exists a family of kernels $(\widetilde p_{hk}(t,\cdot,\cdot))_{1\leq h,k\leq m}\in L^\infty(\R^d\times\R^d)$ which satisfies \eqref{kernel_representation}.
Further, still from \eqref{weak-diamagnetic} (which provides Gaussian estimates for $\widetilde p_{hk}(t,x,\cdot)$ for every $t>0$ and almost every $x\in\R^d$) and the proof of \cite[Theorems 3.3.4 \& 3.3.5]{Meyer-Nieberg}, we infer that, for every $t>0$, estimate \eqref{stima_nuclei_gauss} holds true, with $\widetilde p_{hk}$ instead of $p_{hk}$, for almost every $(x,y)\in\R^d\times \R^d$. To get the validity of estimate \eqref{stima_nuclei_gauss} for every $(x,y)\in\R^d\times \R^d$ and every $t>0$, we modify the kernels as follows: for every $h,k\in\{1,\ldots,m\}$ and every $t>0$, we set
\begin{align*}
p_{hk}(t,x,y)
=\begin{cases}
\widetilde p_{hk}(t,x,y), & \textit{if }\displaystyle|\widetilde p_{hk}(t,x,y)|\leq \frac{1}{(4\pi t)^\frac d2}\exp\left(-\frac{|x-y|^2}{4t}\right), \\[2mm]
0, & \textit{otherwise}.
\end{cases}
\end{align*}
Clearly, for every $t>0$, the set $\{y\in\R^d:\widetilde p_{hk}(t,x,y)\neq p_{hk}(t,x,y), \ h,k\in\{1,\ldots,m\}\}$ has null Lebesgue measure in $\R^d$. Therefore, for almost every $x\in \R^d$ and every $h\in\{1,\ldots,m\}$,
\begin{align*}
((e^{-t\bm{A}}\bm f)(x))_h
= \sum_{k=1}^m\int_{\R^d}\widetilde p_{hk}(t,x,y)f_k(y)dy
=\sum_{k=1}^m\int_{\R^d}p_{hk}(t,x,y)f_k(y)dy
\end{align*}
for every $\bm f\in L^1(\R^d;\C^m) \cap L^2(\R^d;\C^m)$, so that also \eqref{kernel_representation} is satisfied.
\smallskip

(ii) From \eqref{stima_nuclei_gauss} and the contractivity of the
Gauss-Weierstrass semigroup in $L^p(\R^d;\C^m)$ for every $p\in [1,\infty)$, it follows that each operator $e^{-t\bm{A}}$ extends to a contraction on $L^p(\R^d;\C^m)$. The semigroup property is inherited by density in $L^p(\R^d;\C^m)$ from the corresponding property in $L^2(\R^d;\C^m)$. Hence, $(e^{-t\bm{A}})_{t\ge 0}$ extends to the scale $L^p(\R^d;\C^m)$ ($p\in [1,\infty)$), and, denoting these semigroups by $(e^{-t\bm{A}_p})_{t\ge 0}$, it is easy to check that they are consistent.

We notice that $(e^{-t\bm{A}_p})_{t\ge 0}$ is a strongly continuous semigroup (of contractions) in $L^p(\R^d;\C^m)$ for every $p$ as above. In fact, the case $p\in[2,\infty)$ follows from a density argument, the case $p\in(1,2]$ can be proved by duality and the case $p=1$ follows as in the scalar case, see for instance \cite[p. 56]{Ouhabaz}. 

(iii) Fix $\bm f\in C^{\infty}_c(\R^d;\C^m)$ and notice that, for every $t_0\in(0,\infty)$, every $t\in(t_0,\infty)$ and almost every $x\in\R^d$, from property (i) it follows that
\begin{align*}
\|(e^{-t\bm{A}_p}\bm f)(x)-(e^{-t_0\bm{A}_p}\bm f)(x)\|
= & \|(e^{-t_0\bm{A}_p}(e^{-(t-t_0)\bm{A}_p}\bm f-\bm f))(x)\|\\
\le & (e^{t_0\Delta}\|e^{-(t-t_0)\bm{A}_p}\bm f-\bm f\|)(x),
\end{align*}
so that 
\begin{equation*}
\|e^{-t\bm{A}_p}\bm f-e^{-t_0\bm{A}_p}\bm f\|_{\infty}
\le  \|e^{t_0\Delta}\|e^{-(t-t_0)\bm{A}_p}\bm f-\bm f\|\|_{\infty}
\le (4\pi t_0)^{-\frac{d}{2}}\|e^{-(t-t_0)A_1}\bm f-\bm f\|_1.
\end{equation*}
This estimate implies that $e^{-t\bm{A}_p}\f$ converges in $L^{\infty}(\R^d;\C^m)$ to $e^{-t_0\bm{A}_p}\f$ as $t$ tends to $t_0$ from the right.
With a slight modification in the previous argument, it can be also proved that $\|e^{-t\bm{A}_p}\f-e^{-t_0\bm{A}_p}\f\|_{\infty}$ tends to zero as $t$ tends to $t_0$ from the left.

The density of $C^{\infty}_c(\R^d;\C^m)$ in $L^p(\R^d;\C^m)$ together with the contractiveness of the operators $e^{-t\bm{A}_p}$ allow to extend the previous result to every $\f\in L^p(\R^d;\C^m)$.

(iv) Fix $p\in [1,\infty)$ and $\f\in D(\bm{A}_p)$. Then, there exists
$\g\in L^p(\R^d;\C^m)$ such that $\f=(1+\bm{A}_p)^{-1}\g$. Approximating $\g$ with a sequence $(\g_n)_{n\in\N}\subset C^{\infty}_c(\R^d;\C^m)$, which converges to $\g$ in $L^p(\R^d;\C^m)$, it can be easily proved that the sequence $(\f_n)_{n\in\N}$, defined by 
$\f_n=(1+\bm{A}_p)^{-1}\bm{g}_n$ for every $n\in\N$, converges to $\f$ in $(D(\bm{A}_p),\|\cdot\|_{D(\bm{A}_p)})$. Finally, recalling that $D(\bm{A}_p)$ is dense in $L^p(\R^d;\C^m)$, we conclude that 
the set $(1+\bm{A}_p)^{-1}(C^{\infty}_c(\R^d;\C^m))$ is dense also in $L^p(\R^d;\C^m)$.

(v) If $\bm f=(1+\bm{A}_p)^{-1}\bm{g}$ for some $\bm{g}\in C^{\infty}_c(\R^d;\C^m)$ for some $p\in [1,\infty)$, then, from the consistency of the semigroups, $\bm f=(1+\bm{A}_p)^{-1}\bm g=(1+\bm{A})^{-1}\bm g$ for some $\bm g\in L^p(\R^d;\C^m)\cap L^2(\R^d;\C^m)$. From the definition of $\bm{A}$, we infer that $\bm A\bm f=-\bm\Delta_\a\bm f+V\bm f$ in the sense in distributions. Therefore,
\begin{equation*}
\bm{A}_p\bm f=\bm g-\bm f= \bm A\bm f=-\bm\Delta_\a\bm f+V\bm f.    \qedhere
\end{equation*}
\end{proof}

An application of this result, together with \cite[Theorem 3.6]{ALLR-2}, yields the boundedness of the $\textup{H}^\infty$-calculus for $\bm{A}_p$ on $L^p(\R^d;\C^m)$.

\begin{cor}
\label{coro:Hinfinity}
The operator $\bm{A}_p$ admits bounded $\textup{H}^\infty$-calculus on $L^p(\R^d;\C^m)$ for every $p\in(1,\infty)$.
\end{cor}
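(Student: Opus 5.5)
The plan is to apply \cite[Theorem 3.6]{ALLR-2}, the abstract criterion stated just before the corollary. It yields a bounded $\textup{H}^\infty$-calculus on $L^p$, $p\in(1,\infty)$, for a nonnegative self-adjoint operator on $L^2(\R^d;\C^m)$ whose semigroup has a kernel with Gaussian upper bounds. So I only need to check the hypotheses of that theorem for $\bm A$, all of which are contained in Theorem \ref{theo-ouh} and Proposition \ref{prop:smgr_L1}.

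\emph{Step 1: the $L^2$ structure.} By Proposition \ref{prop:prop:a}, $\bm A$ is the operator associated with a densely defined, closed, symmetric, nonnegative form. Hence $\bm A$ is self-adjoint and nonnegative on $L^2(\R^d;\C^m)$. By the spectral theorem it therefore has a bounded $\textup{H}^\infty(\Sigma_\theta)$-calculus on $L^2$ for every $\theta\in(0,\pi)$, with constant $1$. This gives the $L^2$ starting point needed by the Gaussian-bound results.

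\emph{Step 2: the Gaussian bound.} Proposition \ref{prop:smgr_L1}(i) provides kernels $p_{hk}(t,x,y)$. Each entry is bounded by the heat kernel $(4\pi t)^{-d/2}e^{-|x-y|^2/(4t)}$, so the matrix norm of $(p_{hk}(t,x,y))_{h,k}$ is bounded by $m$ times the heat kernel. This is exactly a Gaussian estimate of the form $c\,t^{-d/2}e^{-b|x-y|^2/t}$ on $\R^d$, a space of homogeneous type. By Proposition \ref{prop:smgr_L1}(ii), the semigroups $(e^{-t\bm A_p})_{t\ge0}$ are consistent with the $L^2$ semigroup, so the generator $\bm A_p$ is the one produced by the abstract theorem.

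\emph{Step 3: conclusion.} Invoking \cite[Theorem 3.6]{ALLR-2} for vector-valued operators with Gaussian kernel bounds gives the result. If that theorem is stated in the Duong--McIntosh/Duong--Robinson form, it yields weak type $(1,1)$ of $f(\bm A)$ for $f\in \textup{H}^\infty(\Sigma_\theta)$, with norm controlled by $\|f\|_\infty$. Marcinkiewicz interpolation with the $L^2$ bound then gives boundedness for $p\in(1,2]$, and duality gives $p\in[2,\infty)$, since $\bm A$ is self-adjoint and $(f(\bm A_p))^*=\bar f^*(\bm A_{p'})$.

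I expect the only real obstacle to be matching the exact hypotheses of \cite[Theorem 3.6]{ALLR-2}, for instance whether it requires the pointwise bound for every $(x,y)$ or measurability of the kernel. This is why the kernels were modified in Proposition \ref{prop:smgr_L1}(i) so that \eqref{stima_nuclei_gauss} holds everywhere. With that in place, the argument is a direct application.
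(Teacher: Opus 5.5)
Your proposal is correct and follows essentially the same route as the paper. The paper treats $p=2$ via self-adjointness and nonnegativity of $\bm A$, and obtains the general case from \cite[Theorem 3.6]{ALLR-2}, whose hypotheses it verifies through Proposition \ref{prop:prop:a} and the Gaussian kernel bound in Proposition \ref{prop:smgr_L1}(i), exactly as you do.
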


\begin{proof}
The case $p=2$ is obvious since $\bm A$ is a non-negative self-adjoint operator (see for instance \cite[Corollary 7.1.6]{haase2006}). 
The general case follows from \cite[Theorem 3.6]{ALLR-2}, whose assumptions are fulfilled thanks to Proposition \ref{prop:prop:a} and Proposition \ref{prop:smgr_L1}(i). 
\end{proof}

For further use, we need to consider kernels which are measurable with respect to the triple $(t,x,y)\in(0,\infty)\times \R^d\times \R^d$. 

\begin{prop}
\label{proposition:measurability}
There exists a family of kernels $\{q_{hk}: h,k\in\{1,\ldots,m\}\}$ such that, for 
every $\f\in L^p(\R^d;\C^m)$ and almost every $t\in (0,\infty)$,
\begin{align}
e^{-t\bm{A}_p}\f=\sum_{k=1}^m\int_{\R^d}q_{hk}(t,\cdot,y)f_k(y)dy,\qquad\;\,\textit{a.e. in }\R^d.
\label{semigruppo-ker}
\end{align}
Moreover, each function $q_{hk}$ is measurable in $(0,\infty)\times\R^d\times\R^d$ with values in $\C$ and 
\begin{align}
\label{stima_exp_nuclei_q}
|q_{hk}(t,x,y)|\leq \frac{1}{(4\pi t)^{\frac d2}}\exp\bigg(-\frac{|x-y|^2}{4t}\bigg ),\qquad\;\,\textit{a.e. in } (0,\infty)\times\R^d\times\R^d.   
\end{align}
\end{prop}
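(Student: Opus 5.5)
The plan is to build jointly measurable kernels from the semigroup itself, using the continuity in time from Proposition \ref{prop:smgr_L1}(iii) together with the Gaussian bound \eqref{stima_nuclei_gauss}. The difficulty is that the kernels $p_{hk}(t,\cdot,\cdot)$ of Proposition \ref{prop:smgr_L1}(i) are defined separately for each $t$, and the modification performed there need not be measurable in $t$.

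First, consider the bounded operator-valued map $t\mapsto e^{-t\bm A_1}$. Fix $h,k$ and a countable family $\{\bm\varphi_n\}\subset C_c^\infty(\R^d;\C^m)$. For each $n$, the function $t\mapsto e^{-t\bm A_1}\bm\varphi_n$ lies in $C((0,\infty);L^1\cap L^\infty)$ by Proposition \ref{prop:smgr_L1}(iii) and the $L^1$ strong continuity. It therefore admits a version $F_n(t,x)$ that is jointly measurable on $(0,\infty)\times\R^d$: one takes a continuous map into $L^1_{\rm loc}$ and uses the standard fact that such a map has a jointly measurable representative, for instance as an a.e. limit of piecewise-constant-in-$t$ approximations.

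To get the kernel in $(t,x,y)$ simultaneously, I would use a Lebesgue differentiation argument. Set $\varphi_{y,r}=|B(y,r)|^{-1}\mathds 1_{B(y,r)}\bm e_k$, and define
\[
q_{hk}(t,x,y)=\limsup_{r\to0^+,\,r\in\mathbb Q}\,\big(e^{-t\bm A_1}\varphi_{y,r}\big)_h(x)
=\limsup_{r}\frac{1}{|B(y,r)|}\int_{B(y,r)}p_{hk}(t,x,z)\,dz ,
\]
taking real and imaginary parts separately. To make this jointly measurable, I would write the average as $\int_{\R^d}p_{hk}(t,x,z)\,\psi_r(y-z)\,dz$ and note that, for fixed $r$, the map $(t,y)\mapsto e^{-t\bm A_1}(\psi_r(y-\cdot)\bm e_k)$ is continuous into $L^1(\R^d)$, since translation is $L^1$-continuous and contractivity gives uniform control. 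So for fixed $r$ we have a continuous map from $(0,\infty)\times\R^d$ into $L^1(\R^d)$. This has a jointly measurable representative $G_r(t,y,x)$, and I take the limsup over rational $r$. The Gaussian bound survives this averaging, giving $|G_r|\le$ an average of Gaussians, which converges to the Gaussian pointwise.

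Next comes identification. For each fixed $t$, Lebesgue differentiation in $z$ applied to $p_{hk}(t,x,\cdot)$ for a.e. $x$ shows that $q_{hk}(t,x,y)=p_{hk}(t,x,y)$ for a.e. $(x,y)$. This holds provided the representative $G_r(t,\cdot,\cdot)$ agrees a.e. with the actual integral for a.e. $t$, and Fubini gives exactly a.e. $t$. Then \eqref{semigruppo-ker} follows for $\f\in L^1\cap L^2$ from \eqref{kernel_representation}, and for general $\f\in L^p$ by density and the Gaussian bound, which makes both sides continuous in $L^p$. Finally, \eqref{stima_exp_nuclei_q} follows from \eqref{stima_nuclei_gauss} and the a.e. identification; if necessary, I would truncate $q_{hk}$ to zero where the bound fails, which is a measurable set.

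The main obstacle is the existence of a jointly measurable representative for a continuous $L^1$-valued map with a parameter. I would handle it concretely by approximating with step functions on dyadic grids in $(t,y)$ and extracting an a.e. convergent subsequence via Borel–Cantelli, using uniform continuity on compacts.
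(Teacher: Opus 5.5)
Your argument is correct, but it takes a different route from the paper's.

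\textbf{The paper's route.} The paper never relates $q_{hk}$ to the kernels $p_{hk}$ of Proposition \ref{prop:smgr_L1}(i). On each slab $(\varepsilon,T)\times\R^d\times\R^d$ it proceeds in three steps:
\begin{enumerate}
\item It builds the operator $\mathscr B_{hk}u=\int_\varepsilon^T(e^{-t\bm A_1}u(t,\cdot)\bm e_k)_h\,dt$, which maps $L^1((\varepsilon,T)\times\R^d)$ into $L^\infty(\R^d)$ with norm at most $(4\pi\varepsilon)^{-d/2}$. This is where the $L^\infty$-valued continuity of Proposition \ref{prop:smgr_L1}(iii) is needed.
\item It forms the functional $\Phi(u)=\int_{\R^d}\mathscr B_{hk}(u(\cdot,x,\cdot))(x)\,dx$ on $L^1((\varepsilon,T)\times\R^d\times\R^d)$ and obtains $q_{hk}$ by $L^1$--$L^\infty$ duality.
\item It reads off the Gaussian bound and \eqref{semigruppo-ker} by testing on products $\mathds 1_B\otimes\mathds 1_E\otimes v$, with the help of Proposition \ref{lemma:rappr_misurabile}.
\end{enumerate}

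\textbf{Your route.} Your Lebesgue-differentiation construction replaces duality with a two-parameter version of Proposition \ref{lemma:rappr_misurabile}. Its proof carries over verbatim to the parameter space $(0,\infty)\times\R^d$, because your map is bounded by $1$ in $L^1$. You need only $L^1$-strong continuity, and you get a stronger conclusion: $q_{hk}(t,\cdot,\cdot)=p_{hk}(t,\cdot,\cdot)$ a.e.\ for a.e.\ $t$. Hence the exceptional set of times in \eqref{semigruppo-ker} can be chosen independently of $\f$. The paper's testing argument, done for each $v$ separately, a priori gives a null set depending on $\f$.

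\textbf{What each buys.} The price of your route is the null-set bookkeeping in the identification step. You need sections of $G_r$ at a.e.\ $(t,y)$, and joint measurability of $(x,y)\mapsto\int p_{hk}(t,x,z)\psi_r(y-z)\,dz$ for each fixed $t$; the latter holds because $p_{hk}(t,\cdot,\cdot)$ is bounded and measurable. Your Fubini arguments handle this correctly. The paper's duality step is shorter and does not depend on which representatives $p_{hk}(t,\cdot,\cdot)$ were fixed in Proposition \ref{prop:smgr_L1}.

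\textbf{Two cosmetic points.} Your first paragraph on the countable family $\{\bm\varphi_n\}$ is never used. For $p>2$, the density step should start from simple functions with bounded support, which lie in $L^1\cap L^2\cap L^p$, rather than from $L^1\cap L^2$, so that consistency of the semigroups applies.
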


\begin{proof}
To begin with, we recall that for every function $v\in C_b([0,\infty);L^p(\R^d;\C))$, with some $p\in [1,\infty)$, there exists a measurable function $\widetilde v:(0,\infty)\times\R^d\to\C$, such that $\widetilde v\in L^p((0,T)\times\R^d;\C)$ for every $T>0$ and for almost every $t\in (0,\infty)$, the equality $v(t)=\widetilde v(t,\cdot)$ holds almost everywhere in $\R^d$. This is a classical result from measure theory. For the sake of completeness, we provide a detailed proof in the Appendix (see Proposition \ref{lemma:rappr_misurabile}).

The previous result applies to the function $t\mapsto (e^{-t\bm{A}_p}(f{\bm e}_k))_h$ for every 
$f\in L^p(\R^d;\C)$ and every $h,k\in\{1,\ldots,m\}$. We denote by $\widetilde u_{hk}^{f,p}$ the corresponding function in $L^p((0,T)\times\R^d;\C)$, for every $T>0$, such that
$(e^{-t \bm{A}_p}(f\bm e_k))_h=\widetilde u_{hk}^{f,p}(t,\cdot)$ almost everywhere in $\R^d$, for almost every $t\in (0,\infty)$.

Fix $\varepsilon\in(0,T)$ and $h,k\in\{1,\ldots,m\}$, and consider the set
$F$ of all finite linear combinations of functions of the form $\mathds 1_B\otimes v$ such that $B\in\mathscr{L}((\varepsilon,T))$ and $v\in L^1(\R^d;\C)$. On $F$, which is dense in $L^1((\varepsilon,T)\times\R^d;\C)$, we introduce the linear operator $\mathscr B_{hk}$, defined by 
\begin{align*}
\mathscr B_{hk}(u)
= \int_{\varepsilon}^T(e^{-t\bm{A}_1}u(t,\cdot)\bm e_k)_hdt,\qquad\;\,u\in F.
\end{align*}
From Proposition \ref{prop:smgr_L1}(iii), it follows that, for every $v\in L^1(\R^d)$, the function  
$t\mapsto (e^{-tA_1}(v\bm e_k))$ belongs to $C((0,\infty);L^\infty(\R^d;\C^m))$ and, hence, to $L^1((\varepsilon,T);L^\infty(\R^d;\C^m))$ for every $\varepsilon>0$. 

Let $u=\sum_{i=1}^rc_i\mathds{1}_{B_i}\otimes v_i$, with $B_i\cap B_j=\varnothing$ if $i\neq j$, which is not restrictive, and $c_1,\ldots,c_r\in\C$.
Formulae \eqref{kernel_representation} and \eqref{stima_nuclei_gauss} show that, for every $t\in (\varepsilon,T)$, 
\begin{align*}
\left|((e^{-t\bm{A}_1}u(t,\cdot)\bm e_k)(x))_h\right|
= & \bigg |\sum_{i=1}^r{c_i}\mathds 1_{B_i}(t)\int_{\R^d}p_{hk}(t,x,y)v_i(y)dy\bigg | \\
\leq &\frac{1}{(4\pi t)^{\frac d2}}
\sum_{i=1}^r|c_i|\|v_i\|_1\mathds 1_{B_i}(t)
\end{align*}
for almost every $x\in\R^d$. Hence,
it follows that $\mathscr B_{hk}(u)\in L^{\infty}(\R^d)$ and
\begin{align}
\label{stima_bhk}
\|\mathscr B_{hk}(u)\|_\infty 
\le\frac{1}{(4\pi \varepsilon)^\frac d2}\|u\|_{L^1((\varepsilon,T)\times \R^d)}.
\end{align}
By density, \eqref{stima_bhk} can be extended to every $u\in L^1((\varepsilon,T)\times \R^d;\C)$.

Let us now introduce the operator $\Phi\colon\widetilde F\to \R$, defined by
\begin{align*}
\Phi(u)=\int_{\R^d}{\mathscr B}_{hk}(u(\cdot,x,\cdot))dx, \qquad u\in \widetilde F,   
\end{align*}
where $\widetilde F$ is the set of all finite linear combinations of functions of the form 
$\mathds 1_B\otimes \mathds 1_{E}\otimes v$ where $B\in\mathscr{L}((\varepsilon,T))$, $E\in\mathscr{L}(\R^d)$ and $v\in L^1(\R^d;\C)$.

From the definition of $\Phi$ and the estimate on the $L^{\infty}$-norm of $\mathscr B_{hk}(u)$, it follows that, if 
\begin{align*}
u(t,x,y)=\sum_{i=1}^rc_i\mathds 1_{B_i}(t)\mathds 1_{E_i}(x)v_i(y), \qquad\;\, (t,x,y)\in (\varepsilon,T)\times \R^d\times \R^d,   
\end{align*}
with $(B_i\times E_i)\cap (B_j\times E_j)=\varnothing$ if $i\neq j $ (which is not restrictive), then 
\begin{align*}
|\Phi(u)|
=\bigg |\sum_{i=1}^rc_i\int_{\R^d}\mathscr{B}_{hk}(\mathds 1_{B_i}\otimes v_i)\mathds 1_{E_i}dx\bigg |
\le \frac{1}{(4\pi\varepsilon)^\frac d2}\|u\|_{L^1((\varepsilon,T)\times \R^d\times \R^d;\C)}.
\end{align*}
Hence, $\Phi$ extends to a linear continuous functional on $L^1((\varepsilon,T)\times \R^d\times \R^d;\C)$, and so there exists $q_{hk}\in L^\infty((\varepsilon,T)\times \R^d\times \R^d;\C)$ such that
\begin{align*}
\Phi(u)=\int_{(\varepsilon,T)\times\R^d\times \R^d}q_{hk}(t,x,y)u(t,x,y)dtdxdy.    
\end{align*}
In particular, if $u=\mathds 1_{B}\otimes\mathds 1_{E}\otimes \mathds{1}_F$, with $B\in \mathscr {\mathcal B}((\varepsilon,T))$, $E,F\in \mathscr {\mathcal B}(\R^d)$, then from \eqref{ug_int_punt} it follows that
\begin{align*}
\int_{B\times E\times F}q_{hk}(t,x,y)dtdxdy
=& \int_E\bigg (\int_B(e^{-t\bm{A}_1}(\mathds{1}_F\bm e_k))_hdt\bigg )(x)dx \\
= & \int_Edx\int_B\widetilde u_{hk}^{\mathds{1}_F}(t,x)dt\\
=& \int_Bdt\int_E\widetilde u_{hk}^{\mathds{1}_F}(t,x)dx \\
= & \int_Bdt\int_E(e^{-t\bm{A}_1}(\mathds{1}_F\bm e_k))_h(x)dx.
\end{align*}
From this chain of equalities, \eqref{kernel_representation} and \eqref{stima_nuclei_gauss}, we infer that
\begin{align*}
\bigg |\int_{B\times E\times F}q_{hk}(t,x,y)dtdxdy\bigg |
\leq & \int_Bdt\int_Edx \int_{\R^d}|p_{hk}(t,x,y)|\mathds 1_F(y)dy \\
\le & \int_{B\times E\times F}\frac{1}{(4\pi t)^{\frac d2}}\exp\left(-\frac{|x-y|^2}{4t}\right)dtdxdy.
\end{align*}
The arbitrariness of $B$, $E$ and $F$ implies that
\begin{align*}
|q_{hk}(t,x,y)|\leq \frac{1}{(4\pi t)^{\frac d2}}\exp\left(-\frac{|x-y|^2}{4t}\right)  
\end{align*}
for almost every $(t,x,y)\in(\varepsilon,T)\times \R^d\times \R^d$. Estimate \eqref{stima_exp_nuclei_q} follows immediately.

To prove \eqref{semigruppo-ker} we fix $p\in [1,\infty)$ and notice that, from the consistency of the semigroups $(e^{-t\bm A_q})_{t\ge 0}$, for $q\in [1,\infty)$,
Proposition \ref{prop:smgr_L1}(iii) and the second part of Lemma \ref{lemma:rappr_misurabile}, it follows that, if $u=\mathds 1_{B}\otimes\mathds 1_{E}\otimes v$, with $B\in \mathscr{L}((\varepsilon,T))$, $E\in \mathscr{L}(\R^d)$ and $v\in L^1(\R^d;\C)\cap L^p(\R^d;\C)$, then
\begin{align*}
\int_{B\times E}dtdx\int_{\R^d}q_{hk}(t,x,y)v(y)dy
= & \int_E\bigg (\int_B(e^{-t\bm{A}_1}(v \bm e _k))_hdt\bigg )(x)dx\\
= & \int_E\bigg (\int_B(e^{-t\bm{A}_p}(v \bm e _k))_hdt\bigg )(x)dx.
\end{align*}

If $p\in (1,\infty)$, then we extend the previous equality to every $v\in L^p(\R^d)$, taking a sequence $(v_n)\subset L^1(\R^d;\C)\cap L^p(\R^d;\C)$ converging to $v$ in $L^p(\R^d;\C)$.
Writing 
\begin{align}
\label{rapp_q_hk-n}
\int_{B\times E}dtdx\int_{\R^d}q_{hk}(t,x,y)v_n(y)dy
=\int_E\bigg (\int_B(e^{-t\bm{A}_p}(v_n \bm e _k))_hdt\bigg )(x)dx
\end{align}
for every $n\in\N$ and letting $n$ tend to $\infty$, the previous formula follows with $v$ instead of $v_n$. Note that the integral on the left-hand side of \eqref{rapp_q_hk-n} converges by dominated convergence thanks to \eqref{stima_exp_nuclei_q}, whereas the right-hand side converges due to the strong continuity of the semigroup $(e^{-t\bm{A}_p})_{t\ge 0}$. It thus follows that
\begin{align*}
\int_{B\times E}\!dtdx\int_{\R^d}q_{hk}(t,x,y)v(y)dy
=&\int_E\bigg (\int_B(e^{-t\bm{A}_p}(v \bm e _k))_hdt\bigg )(x)dx
=  \int_{B\times E}\widetilde u_{hk}^{v,p}(t,x)dtdx
\end{align*}
for every $p\in [1,\infty)$.
Again, the arbitrariness of $B$ and $E$ and $\varepsilon$ yields \eqref{semigruppo-ker}. 
\end{proof}

\section{Maximal inequalities in \texorpdfstring{$L^p(\R^d;\C^m)$}{Lp(Rd;Cm)}}
\label{sec:max_ineq_Lp}

In this section, we prove maximal inequalities for the functions $\bm\Delta_\a \bm u$ and $V\bm u$, when $\bm u\in D(\bm{A}_p)$ under the following additional assumption on the potential $V$, which is assumed throughout the section. 

\begin{hyp}
\label{hyp:avl_potenziale}
The eigenvalues of $V$ are comparable with each other, i.e., if we denote by $\Lambda_V(x)$ the largest eigenvalue of $V(x)$, then there exists a constant $\kappa\geq 1$ such that, for almost every $x\in\R^d$,
\begin{equation*}
\lambda_V(x)\le\Lambda_V(x)\le \kappa\lambda_V(x).
\end{equation*}    
\end{hyp}

To prove the maximal inequalities and avoid some technical difficulties, we use an approximation argument, approximating the  vector-valued potential $V$ with a family of bounded vector-valued potential $V_n$ ($n\in\N$) defined as follows. For every $n\in\N$ and every $x\in\R^d$, we set $F_n\coloneqq\{x\in \R^d:n>\kappa\lambda_V(x)\}$, where $\kappa$ is the positive constant in Hypothesis \ref{hyp:avl_potenziale}, and 
\begin{align*}
V_n(x)\coloneqq{\mathds 1}_{F_n}(x) V(x)+(1-{\mathds 1}_{F_n}(x))(\lambda_V(x)\wedge n)\operatorname{Id}_{\R^m}.    
\end{align*}

Clearly, for every $n\in\N$ the matrix-valued function $V_n$ is bounded and
\begin{equation}
\label{eq:Vn:ge:forma}
( \lambda_V(x)\wedge n)\|\xi\|^2 \le\langle V_n(x)\xi,\xi\rangle\le \kappa(\lambda_V(x)\wedge n)\|\xi\|^2
\end{equation}
for almost every $x\in\R^d$, every $\xi\in\C^m$ and every $n\in\N$.

For every $n\in\N$, we also consider the realization $H_{n,p}$ in $L^p(\R^d;\C)$ of the formal operator $-\Delta+(\lambda_V\wedge n)$, with domain
$D(H_{n,p})=W^{2,p}(\R^d;\C)$, if $p\in (1,\infty)$, and $D(H_{n,1})=\{u\in L^1(\R^d;\C): \Delta u\in L^1(\R^d;\C)\}$, if $p=1$.
By Lemma \ref{lemma:operatori_scalari_n}, each operator $-H_{n,p}$ generates a positive strongly continuous analytic semigroup $(e^{-tH_{n,p}})_{t\ge 0}$ in $L^p(\R^d;\C)$ and the semigroups are consistent.

\begin{theo}
For every $\uu\in D(\bm{A}_1)$ it holds that
\begin{equation}
\label{eq:maximal_ineq:L1_nuovo}
\norm{\bm\Delta_{\a} \uu}_1\leq (\kappa+1)\|\bm{A}_1\bm u\|_1, \qquad  \norm{V\uu}_1 \le \kappa \norm{\bm{A}_1 \uu}_1,
\end{equation}
where $\kappa$ is the constant in Hypothesis $\ref{hyp:avl_potenziale}$.
If $\lambda_V\in B_q$ for some $ q\in (1,\infty)$, then there exists $\delta>0$, depending on $[\lambda_V]_{B_q}$, such that
\begin{equation}\label{eq:maximal_ineq:Lp_nuovo}
\norm{\bm\Delta_{\a} \uu}_p\leq (\kappa c_p'+1)\|\bm{A}_p\bm u\|_p, \qquad\;\,  \norm{V\uu}_p \le \kappa c_p'\norm{\bm{A}_p \uu}_p,\quad\;\,\uu\in D(\bm{A}_p),
\end{equation}
for every $p\in [1,q+\delta]$ and some positive constant $c_p'$. If  $\lambda_V\in B_{\infty}$, then estimate \eqref{eq:maximal_ineq:Lp_nuovo} holds true for every $p\in [1,\infty)$. 
\end{theo}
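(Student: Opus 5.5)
The plan is to reduce everything to an estimate of $\|V\uu\|_p$ in terms of $\|\bm{A}_p\uu\|_p$ by way of the diamagnetic inequality. The estimate for $\bm\Delta_\a\uu$ then follows from the triangle inequality $\|\bm\Delta_\a\uu\|_p\le\|\bm A_p\uu\|_p+\|V\uu\|_p$. The constant $\kappa$ enters only through $\|V(x)\xi\|\le\Lambda_V(x)\|\xi\|\le\kappa\lambda_V(x)\|\xi\|$. So it suffices to prove
\[
\|\lambda_V\uu\|_p\le c_p'\|\bm A_p\uu\|_p,
\]
with $c_1'=1$ when $p=1$.

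For this, write $\uu=(\varepsilon+\bm A_p)^{-1}\f$ and use the Laplace transform together with Proposition \ref{prop:smgr_L1}(ii):
\[
\|\uu\|\le\int_0^\infty e^{-\varepsilon t}e^{-tH_p}\|\f\|\,dt=(\varepsilon+H_p)^{-1}\|\f\| \quad\text{a.e.}
\]
Hence $\lambda_V\|\uu\|\le\lambda_V(\varepsilon+H_p)^{-1}\|\f\|$ pointwise. This reduces the problem to the scalar maximal estimate
\[
\|\lambda_V(\varepsilon+H_p)^{-1}g\|_p\le c_p'\|g\|_p,\qquad g\ge0,
\]
uniformly in $\varepsilon>0$. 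Since $\f=\varepsilon\uu+\bm A_p\uu$, letting $\varepsilon\to0$ then gives $\|\lambda_V\uu\|_p\le c_p'\|\bm A_p\uu\|_p$; the term $\varepsilon\|\uu\|_p\le\|\f\|_p$ is harmless because it is multiplied by $\varepsilon$, and $\bm A_p$ is sectorial.

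The scalar estimate depends on $p$.
\begin{itemize}
\item For $p=1$ and $g\ge0$, set $w=(\varepsilon+H_1)^{-1}g\ge0$. Integrating $\varepsilon w-\Delta w+\lambda_Vw=g$ over $\R^d$ should give $\int\lambda_Vw\le\int g$, since $\int\Delta w=0$ and $\varepsilon w\ge0$. This yields $c_1'=1$ and hence the constants $\kappa$ and $\kappa+1$ in \eqref{eq:maximal_ineq:L1_nuovo}.
\item For $p>1$, I would invoke Shen's classical result: if $\lambda_V\in B_q$, then $\|\lambda_V(-\Delta+\lambda_V)^{-1}\|_{\mathcal B(L^p)}<\infty$ for $p\in[1,q]$. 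Gehring's self-improvement property gives $\lambda_V\in B_{q+\delta}$ for some $\delta$ depending on $[\lambda_V]_{B_q}$, which extends the range to $p\le q+\delta$. When $\lambda_V\in B_\infty$, the same holds for every $p$. Uniformity in $\varepsilon$ should follow by positivity: comparing $(\varepsilon+H)^{-1}$ with $H^{-1}$ via the resolvent identity, or applying Shen's theorem to $\lambda_V+\varepsilon$, whose $B_q$ constant is controlled.
\end{itemize}

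The main obstacle is technical justification rather than the idea. One has to make the $p=1$ integration rigorous: $\lambda_V$ is only $L^1_{\rm loc}$, and $\Delta w$ need not be integrable a priori. The same issue arises in identifying $\bm A_p\uu=-\bm\Delta_\a\uu+V\uu$ with each term in $L^p$. This is exactly where I would use the truncated potentials $V_n$ and the operators $H_{n,p}$. For these, $D(H_{n,1})$ gives $\Delta w\in L^1$ with $\int\Delta w=0$, so the bound $\int(\lambda_V\wedge n)w_n\le\int g$ holds rigorously. I would then let $n\to\infty$ using monotone convergence: the resolvents $(\varepsilon+H_n)^{-1}g$ decrease to $(\varepsilon+H)^{-1}g$ by form monotone convergence. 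Finally, density of $(1+\bm A_p)^{-1}C_c^\infty$ (Proposition \ref{prop:smgr_L1}(iv)–(v)) shows that $\bm\Delta_\a\uu$ is a well-defined distribution lying in $L^p$, and a closedness argument extends the inequalities to all of $D(\bm A_p)$.
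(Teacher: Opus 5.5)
Your argument is correct and has the same skeleton as the paper's proof: domination of the resolvent, a scalar maximal inequality uniform in $\varepsilon$, the limit $\varepsilon\to0^+$, and the Laplacian via $\bm\Delta_\a\f=V\f-\bm A_p\f$ on the core $(1+\bm A_p)^{-1}(C_c^\infty(\R^d;\C^m))$ plus density. The difference is where you truncate.

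In this proof the paper does not compare with $H$ itself. It dominates $e^{-t\bm A_p}$ by the semigroup of $-\Delta+(\lambda_V\wedge n)$, which follows from $\mathfrak a(\uu,|f|\operatorname{sign}(\uu))\ge\mathfrak h_n(\|\uu\|,|f|)$. It then bounds $\|V_n(\varepsilon+\bm A_p)^{-1}\bm v\|_p$ by $\kappa\|((\lambda_V\wedge n)+\varepsilon)(\varepsilon+H_{n,p})^{-1}\|\bm v\|\|_p$. Next it applies Kato's $L^1$ estimate (giving $c_1'=1$) or Auscher--Ben Ali to the bounded weight $(\lambda_V\wedge n)+\varepsilon$, and finally lets $n\to\infty$ by Fatou.

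The two routes trade different costs:
\begin{itemize}
\item \emph{Paper's route.} Bounded weights give $D(H_{n,p})=W^{2,p}(\R^d;\C)$, so the scalar inequality holds on the whole domain with no core issue. The price is that the constant must be uniform in $n$. That is Lemma \ref{lemma:Bp_truncate}, uniform control of $[\lambda_V\wedge n]_{B_q}$ via $A_r$ weights, which is a genuinely nontrivial fact.
\item \emph{Your route.} You only need $[\lambda_V+\varepsilon]_{B_q}\le[\lambda_V]_{B_q}$. However, you must use the scalar theorem as an $L^p$ bound for $(\lambda_V+\varepsilon)(\varepsilon+H_p)^{-1}$ with an unbounded potential, i.e.\ on all of $D(H_p)$. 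If your source states it only on test functions, you must add that $C_c^\infty(\R^d)$ is a core for $H_p$ (Kato, applicable since $\lambda_V\in L^{q+\delta}_{\rm loc}$ by Gehring).
\end{itemize}
Your last paragraph hints that you might fall back on $V_n$ and $H_{n,p}$ for $p>1$ as well. If you do, the uniform $B_q$ bound for truncations becomes indispensable, and it is missing from your plan.

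Two smaller points:
\begin{itemize}
\item For general $q>1$ the scalar $L^p$ result is Auscher--Ben Ali's, as in Lemma \ref{lemma:operatori_scalari_n}. Shen's theorem requires $q\ge d/2$ and $d\ge3$.
\item At $p=1$ you do not need $w_n\downarrow w$. Domination already gives $w\le w_n$, hence $\int(\lambda_V\wedge n)w\le\int g$, and monotone convergence finishes.
\end{itemize}
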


\begin{proof}
We use an approximation procedure which is not really needed in the case $q\in [2,\infty)$, $p\in [q',q+\delta)$ and in the case $q=\infty$.

It suffices to prove the claim when $q\in (1,\infty)$ since $B_{\infty}\subset B_q$ for every $q\in (1,\infty)$. 

Fix $q\in (1,\infty)$. From \cite{gehring}, it follows that there exists $\delta>0$, which only depends on $[\lambda_V]_{B_q}$, such that $\lambda_V\in B_p$ for every $p\in [1,q+\delta]$.

To begin with, we notice that
\begin{equation}\label{eq:diamagnetic:inequality_n}
\|(e^{-t\bm{A}_p}\uu)(x)\|\le (e^{-tH_{n,p}}\|\uu\|)(x),\qquad\;\,t>0,\;\,\textit{a.e.\ } x\in\R^d, 
\end{equation}
for every $\bm u\in C^{\infty}_c(\R^d;\C^m)$ and every
$p\in [1,\infty)$. The previous diamagnetic inequality follows from observing that the operator $H_{n,2}$ is associated with the form
$\mathfrak{h}_n:\textup{H}^1(\R^d;\C)\times \textup{H}^1(\R^d;\C)\to\C$, defined by 
\begin{equation*}
\mathfrak{h}_n(u,v)=\int_{\R^d}\langle \D u(x),\D v(x)\rangle dx +\int_{\R^d} (\lambda_V(x)\wedge n) u(x)\overline{v(x)}dx,\qquad\;\,
u,v\in	\textup{H}^1(\R^d;\C).
\end{equation*}
Since 
$\mathfrak{a}(\uu,|f|{\rm sign}(\uu))\ge \mathfrak{h}_n(\|\uu\|,|f|)$
for every $\uu\in D(\mathfrak{a})$ and every $f\in \textup{H}^1(\R^d;\C)$ (see \eqref{turner} for further details), the claim follows from \cite[Theorem 2.30]{Ouhabaz}.

By taking estimates \eqref{eq:Vn:ge:forma}, \eqref{eq:diamagnetic:inequality_n} together with the positivity of the semigroup $(e^{-tH_{n,p}})_{t\ge 0}$ into account, it follows that
\begin{align}
\|V_n(\varepsilon+\bm{A}_p)^{-1}\bm v\|_p
\le & \kappa\|(\lambda_V\wedge n)(\varepsilon+H_{n,p})^{-1}\|\bm v\|\|_p \notag \\ 
\le & \kappa\|((\lambda_V\wedge n)+\varepsilon) (\varepsilon+H_{n,p})^{-1}\|\bm v\|\|_p
\label{stima_vett_scal_Lp_nuovo}
\end{align}
for every ${\bm v}\in C^{\infty}_c(\R^d;\C^m)$ and every $\varepsilon>0$. 
We can thus apply Lemma \ref{lemma:operatori_scalari_n} and conclude that 
\begin{align}\label{eq:max:in:l+e_nuovo}
\|((\lambda_V\wedge n)+\eps)(\varepsilon+H_{n,p})^{-1}\|\bm v\|\|_{p}
\le c_p'\|\bm v\|_p
\end{align}
for a positive constant $c_p'$, which depends on $p$, $d$ and $[(\lambda_V\wedge n)+\varepsilon]_{B_p}$ but is independent of $\bm v$. Moreover, Lemma \ref{lemma:Bp_truncate}(iii) states that $[(\lambda_V\wedge n)+\varepsilon]_{B_p}$ can be bounded uniformly with respect to $n\in\N$ and $\varepsilon>0$, which implies that we can choose the constant $c_p'$ independent of $n$ and $\varepsilon$.

Substituting estimate \eqref{eq:max:in:l+e_nuovo} into \eqref{stima_vett_scal_Lp_nuovo} gives
$\|V_n(\varepsilon+\bm{A}_p)^{-1}\bm v\|_p\leq \kappa c_p'\|\bm v\|_p$. Letting $n$ tend to $\infty$ and then using a density argument, we infer that $\|V(\varepsilon+\bm{A}_p)^{-1}\bm v\|_p\leq \kappa c_p'\|\bm v\|_p$ for every $\bm v\in L^p(\R^d;\C^m)$ or, equivalently,
\begin{equation}\label{eq:V:le:L+eps_nuovo}
\|V\uu\|_p\le \kappa c_p'\|(\bm{A}_p+\eps)\uu\|_p,\qquad\;\, \uu\in D(\bm{A}_p).
\end{equation}
Letting $\eps$ tend to $0^+$ in \eqref{eq:V:le:L+eps_nuovo}, we conclude that
\begin{eqnarray*}
\|V\uu\|_p\le \kappa c_p' \|\bm{A}_p\uu\|_p,\qquad\;\, \uu\in D(\bm{A}_p).
\end{eqnarray*}

Next, we prove the maximal inequality for the magnetic Laplace operator. For this purpose, we recall that $\bm \Delta_{\a}\bm f=V\bm f-\bm{A}_p\bm f$ for every $\f\in (1+\bm{A}_p)^{-1}(C^\infty_c(\R^d;\C^m))$ (see Proposition \ref{prop:smgr_L1}(v)), so that
$\bm \Delta_{\a}\bm f\in L^p(\R^d;\C^m)$ and
\begin{align*}
\|\bm\Delta_\a\bm f\|_p\leq (1+\kappa c_p')\|\bm{A}_p\bm f\|_p.
\end{align*}
The density of the set $(1+\bm{A}_p)^{-1}(C^\infty_c(\R^d;\C^m))$ in $D(\bm{A}_p)$ (see Proposition \ref{prop:smgr_L1}(iv)) allows us to complete the proof of \eqref{eq:maximal_ineq:Lp_nuovo}.

The proof of estimate \eqref{eq:maximal_ineq:L1_nuovo} can be obtained in the same way. The only difference lies in the proof of \eqref{eq:max:in:l+e_nuovo}, which now follows from \eqref{max_ineq_1_n} and gives $c'_1=1$.
\end{proof}

\section{First-order Riesz transform}
\label{sec:RT}
Since the operator $\bm{A}$ is maximally accretive and self-adjoint on $L^2(\R^d;\C^m)$, it has a unique $m$-accretive self-adjoint square root, denoted as $\bm{A}^{\frac12}$ (see, e.g.,  \cite[Chapter V.3.11]{kato:book}). In addition,
\begin{equation*}
\mathfrak{a} (\uu,{\bm v}) = \langle \bm{A}^{\frac12}\uu,\bm{A}^{\frac12}{\bm v}\rangle_{L^2(\R^d;\C^m)}
\end{equation*}
for every $\uu,{\bm v}\in D(\bm{A}^{\frac12})= D(\mathfrak{a})$. Moreover, recalling that $V$ is nonnegative definite, it follows that
\begin{align}
\|\nabla_{\a}\uu\|^2_2 &= \sum_{j=1}^m\int_{\R^d}\langle \nabla_{\a} u_j(x),\nabla_{\a} u_j(x)\rangle dx\notag\\
&\le \sum_{j=1}^m\int_{\R^d}\langle \nabla_{\a} u_j(x),\nabla_{\a} u_j(x)\rangle dx+\int_{\R^d}\langle V(x) \bm{u}(x),\bm{u}(x) \rangle dx\notag \\
&= \mathfrak{a} (\uu,\uu)=\|\bm{A}^{\frac12}\uu\|^2_2
\label{eq:nabla:le:square:root}
\end{align}
for every $\uu\in D(\mathfrak{a})$. We start with some observations. 

\begin{rem}
\label{rem:Lp:square:root:scalar}
\begin{enumerate}[\rm (i)]
\item
The arguments above, with $V=0$, apply to the realization $-\bm{L}$ of the magnetic Laplacian $\bm{\Delta_{\a}}$ in $L^2(\R^d;\C^m)$, with domain $\{\uu\in \textup{H}^1_{\a}(\R^d;\C^m): {\bm\Delta}_{\a}\uu\in L^2(\R^d;\C^m)\}$, where $\bm{\Delta}_{\a}\uu$ is understood in the distributional sense. Since $\bm{L}$  acts componentwise, so does its square root. Indeed, the $m$-accretive operator defined by ${\rm diag}(L^{\frac{1}{2}}u_1,\dots,L^{\frac{1}{2}}u_m)$ for every $\uu\in \textup{H}_{\a}^1(\R^d;\C^m)$, where $L$ is the realization in $L^2(\R^d;\C)$, via form methods, of the magnetic Laplacian,
is a square root of $\bm{L}$ and again \cite[Chapter V.3.11]{kato:book} implies that $\textup{H}_{\a}^1(\R^d;\C^m)\ni \bm u\mapsto{\rm diag}(L^{\frac{1}{2}}u_1,\dots,L^{\frac{1}{2}}u_m)$ is the unique $m$-accretive square root of the magnetic Laplacian. 
\item 
In the scalar case, it is well known that the operator 
$\nabla_{\bm a} L^{-\frac{1}{2}}$, initially defined on the range of $L^{\frac{1}{2}}$, extends to a contraction on $L^2(\mathbb{R}^d;\mathbb{C})$. 
From property (i), it follows that the operator 
$\nabla_{\bm a}\bm{L}^{-\frac{1}{2}}$ acts componentwise and is a contraction 
on $L^2(\mathbb{R}^d;\mathbb{C}^m)$.
\item
It follows from the definition that 
$\|\bm{L}^{\frac{1}{2}}\uu\|_2 = \|\nabla_{\bm a}\uu\|_2$
for every $\uu \in \textup{H}_{\a}^1(\mathbb{R}^d;\mathbb{C}^m)$. 
Combined with \eqref{eq:nabla:le:square:root}, this yields
$\|\bm{L}^{\frac{1}{2}}\uu\|_2 \le \|\bm{A}^{\frac{1}{2}}\uu\|_2$
for every $\uu \in D(\mathfrak{a})$.
\item
Property (iii) and estimate \eqref{eq:diamagnetic:01} show that 
$\bm{L}^{\frac{1}{2}}$ is one-to-one. Indeed, if 
$\bm{L}^{\frac{1}{2}}\bm{u} = \bm{0}$ for $\bm{u}\in \textup{H}_{\a}^1(\mathbb{R}^d;\C^m)$, then 
$\nabla \|\bm{u}\| = \bm{0}$ almost everywhere in $\mathbb{R}^d$, which implies $\bm{u} = \bm{0}$ almost everywhere in $\mathbb{R}^d$. 
This observation allows us to define the operator 
$\bm{A}^{-\frac{1}{2}}$ on the range of $\bm{A}^{\frac{1}{2}}$. Since the range of $\bm{A}^{-\frac{1}{2}}$ is $D(\bm{A}^{\frac{1}{2}}) \subset \textup{H}_{\a}^1(\mathbb{R}^d;\C^m)$, the operator $\nabla_{\bm a}\bm{A}^{-\frac{1}{2}}$ is well defined on $D(\bm{A}^{-\frac12})$ and is commonly referred to as the {\it Riesz transform}. 
In the same way, one can show that the operator 
$\bm{L}^{\frac12}\bm{A}^{-\frac{1}{2}}$ is also well defined on $D(\bm{A}^{-\frac{1}{2}})$.
\end{enumerate}
\end{rem}

For every $n\in\N$ and $\alpha\in (0,1)$, the function $\psi_n$, defined by $\psi_n(z)=(n^{-1}+z)^{\alpha-1}$ is bounded and holomorphic in the sector $\{z\in\C\setminus \{0\}\colon |\arg(z) | < \theta \}$ for every $\theta\in(0,\pi)$. Therefore, by functional calculus we can define the operator $(n^{-1} + \bm{A})^{\alpha-1} = \psi_n(\bm{A})$ in $L^2(\R^d;\C^m)$ and write
\begin{equation}
\label{eq:rep:e+H:-1/2}
(n^{-1} + \bm{A})^{-\alpha}\f = \frac1{\Gamma(\alpha)}\int_0^\infty t^{\alpha-1}e^{-\frac{t}{n}}e^{-t\bm{A}}\f dt,\qquad\;\,\f\in L^2(\R^d;\C^m),
\end{equation}
for every $n\in\N$, where the integral is understood in the improper sense, see, e.g., \cite[Proposition 3.3.5]{haase2006}.

From Propositions \ref{proposition:measurability} and \ref{lemma:rappr_misurabile}, it follows that
\begin{align}
((n^{-1} + \bm{A})^{-\alpha}\f)_i(x)
= \frac1{\Gamma(\alpha)}\sum_{j=1}^m\int_0^\infty t^{\alpha-1}e^{-\frac{t}{n}}dt\int_{\R^d}q_{ij}(t,x,y)f_j(y)dy
\label{order}
\end{align}
for every $\bm f\in L^2(\R^d;\C^m)$. Estimate \eqref{stima_exp_nuclei_q} and H\"older inequality show that the function $(t,y)\mapsto t^{\alpha-1}e^{-\frac{t}{n}}q_{ij}(t,x,y)f_j(y)$ belongs to $L^1((0,\infty)\times\R^d)$ for almost every $x\in\R^d$. Hence, in 
\eqref{order} we can interchange the order of integration and conclude that
\begin{align}
((n^{-1} + \bm{A})^{-\alpha}\f)_i (x)
= & \sum_{j=1}^m\int_{\R^d}\bigg (\frac{1}{\Gamma(\alpha)}\int_0^\infty t^{\alpha-1}e^{-\frac{t}{n}}q_{ij}(t,x,y)dt\bigg ){f_j(y)}dy
\label{lene}
\end{align}
for almost every $x\in\R^d$.

\begin{prop}
\label{prop:conv_nablaH-12}
The sequences $({\nabla}_{\a} (n^{-1}+\bm{A})^{-\frac12})_{n\in\N}$ and $(\bm{L}^{\frac{1}{2}}(n^{-1}+\bm{A})^{-\frac12})_{n\in\N}$ are well defined on $L^2(\R^d;\C^m)$ and converge strongly as $n$ tends to $\infty$ to bounded operators  which are contractions on $L^2(\R^d;\C^m)$ and coincide, respectively, with $\nabla_{\bm a}{\bm A}^{-\frac{1}{2}}$ and ${\bm L}^{\frac12}{\bm A}^{-\frac{1}{2}}$ on $D({\bm A}^{-\frac{1}{2}})$. 
\end{prop}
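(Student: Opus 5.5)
Everything can be done in $L^2$ with the spectral theorem for $\bm A$, using the form identity $\mathfrak a(\uu,\uu)=\|\bm A^{\frac12}\uu\|_2^2$ and the pointwise bounds \eqref{eq:nabla:le:square:root} and Remark \ref{rem:Lp:square:root:scalar}(iii).

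\emph{Well-definedness and uniform bounds.} For $\f\in L^2(\R^d;\C^m)$ and $n\in\N$, the function $\lambda\mapsto(n^{-1}+\lambda)^{-\frac12}$ is bounded on $[0,\infty)$. Hence $(n^{-1}+\bm A)^{-\frac12}\f$ lies in $D(\bm A^{\frac12})=D(\mathfrak a)\subset \textup{H}^1_{\a}(\R^d;\C^m)$, and both $\nabla_{\a}(n^{-1}+\bm A)^{-\frac12}$ and $\bm L^{\frac12}(n^{-1}+\bm A)^{-\frac12}$ are defined on all of $L^2$. By \eqref{eq:nabla:le:square:root} and Remark \ref{rem:Lp:square:root:scalar}(iii),
\[
\|\nabla_{\a}(n^{-1}+\bm A)^{-\frac12}\f\|_2,\ \|\bm L^{\frac12}(n^{-1}+\bm A)^{-\frac12}\f\|_2\le \|\bm A^{\frac12}(n^{-1}+\bm A)^{-\frac12}\f\|_2\le\|\f\|_2 ,
\]
since $\lambda^{\frac12}(n^{-1}+\lambda)^{-\frac12}\le1$. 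The same inequality applied to differences gives
\[
\|\nabla_{\a}((n^{-1}+\bm A)^{-\frac12}-(k^{-1}+\bm A)^{-\frac12})\f\|_2\le \|(\phi_n-\phi_k)(\bm A)\f\|_2,\qquad \phi_n(\lambda)=\lambda^{\frac12}(n^{-1}+\lambda)^{-\frac12},
\]
and the analogous bound holds for $\bm L^{\frac12}$.

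\emph{Strong convergence.} The functions $\phi_n$ are bounded by $1$ and converge pointwise on $[0,\infty)$ to $\mathds 1_{(0,\infty)}$. By dominated convergence in the spectral measure, $\phi_n(\bm A)\f\to P\f$ in $L^2$, where $P$ is the projection onto $\overline{R(\bm A)}=N(\bm A)^\perp$. The previous estimate then shows that both sequences are Cauchy in $L^2$ for every $\f$. They therefore converge strongly, and by the uniform bound the limits are contractions. I will also note that $N(\bm A)=\{0\}$: if $\bm A\uu=0$ then $\mathfrak a(\uu,\uu)=0$, so $\nabla_{\a}\uu=0$ and $\uu=0$ by Remark \ref{rem:Lp:square:root:scalar}(iv). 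Hence $P=I$, although this fact is not essential for the argument.

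\emph{Identification on $D(\bm A^{-\frac12})$.} Take $\f=\bm A^{\frac12}\g$ with $\g\in D(\bm A^{\frac12})$. Then $(n^{-1}+\bm A)^{-\frac12}\f=\phi_n(\bm A)\g$, and
\[
\|\nabla_{\a}(\phi_n(\bm A)\g-\g)\|_2\le \|\bm A^{\frac12}(\phi_n(\bm A)-I)\g\|_2=\|(\phi_n(\bm A)-I)\bm A^{\frac12}\g\|_2\to0,
\]
because $\bm A^{\frac12}\g$ lies in $\overline{R(\bm A)}$, where $\phi_n(\bm A)\to I$. Thus the limit operator equals $\nabla_{\a}\g=\nabla_{\a}\bm A^{-\frac12}\f$, and the same argument handles $\bm L^{\frac12}$. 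The main point requiring care is that the bound \eqref{eq:nabla:le:square:root} is applied to differences of elements of $D(\mathfrak a)$, which is legitimate because $D(\mathfrak a)$ is a linear space. Everything else is routine functional calculus.
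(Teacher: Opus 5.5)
Your proof is correct and follows the paper's argument. Both families are dominated by $\|\bm A^{\frac12}(n^{-1}+\bm A)^{-\frac12}\f\|_2$ via \eqref{eq:nabla:le:square:root} and Remark~\ref{rem:Lp:square:root:scalar}(iii), the Cauchy property comes from the strong convergence of $\phi_n(\bm A)$, and the limit is identified on $D(\bm A^{-\frac12})$ by writing $\f=\bm A^{\frac12}\g$. The only differences are in the tools. You use the spectral theorem with dominated convergence (which, as you note, does not need injectivity of $\bm A$) where the paper uses the $\textup{H}^\infty$ convergence lemma. You identify the limit by a direct form estimate on $\phi_n(\bm A)\g-\g$ rather than through the closedness of $\nabla_{\a}$ and $\bm L^{\frac12}$.
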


\begin{proof}
Let $\bm f\in L^2(\R^d;\C^m)$. Then, $(n^{-1}+\bm{A})^{-\frac12}\f$ belongs to $D( (n^{-1}+\bm{A})^{\frac12})=D(\bm{A}^{\frac{1}{2}})=D(\mathfrak a)$ from \cite[Proposition 3.1.9]{haase2006}. From \eqref{eq:nabla:le:square:root}, it follows that 
\begin{equation}
\label{eq:stima:nablaL_e:2-2}
\|\nabla_{\a} (n^{-1}+\bm{A})^{-\frac12}\f\|^2_2\le \|\bm{A}^{\frac12}(n^{-1}+\bm{A})^{-\frac12}\f\|^2_2 \le \|\f\|^2_2,
\end{equation}
where the second inequality follows from \cite[Theorem 7.1.7]{haase2006}, since the function $\varphi_n$ ($n\in\N$), defined by $\varphi_n(z) = z^{\frac12}(n^{-1}+z)^{-\frac12}$, is holomorphic in a sector which contains the halfline $(0,\infty)$, $|\varphi_n|\le 1$, 
and $\bm{A}$ has bounded $\textup{H}^\infty$-calculus on $L^2(\R^d;\C^m)$ (see Corollary \ref{coro:Hinfinity}). Notice that $\varphi_n$ converges to the function identically equal to $1$, uniformly on compact sets of the sector. Moreover, $\bm{A}$ is injective, densely defined, with dense range and $\sup_{n\in\N}\|\varphi_n(\bm{A})\|_{{\mathcal B}(L^2(\R^d;\C^m))}\leq 1$, Hence, from  \cite[Proposition 5.1.4]{haase2006} we conclude that $\bm{A}^{\frac12}(n^{-1}+\bm{A})^{-\frac12}\f$ converges in $L^2(\R^d;\C^m)$ to $\f$ as $n$ tends to $\infty$. In particular, by \eqref{eq:stima:nablaL_e:2-2}  $(\nabla_{\a}(n^{-1}+\bm{A})^{-\frac12}\bm f)_{n\in\N}$ is a Cauchy sequence in $L^2(\R^d;\C^m)$ and this implies that, for every $\f\in L^2(\R^d;\C^m)$, $\nabla_{\a}(n^{-1}+\bm{A})^{-\frac12}\f$ converges in $L^2(\R^d;\C^m)$ as $n$ tends to $\infty$. In this way, we can define a linear operator on $L^2(\R^d;\C^m)$, which by \eqref{eq:stima:nablaL_e:2-2} is a contraction.

Let us prove that, for every $\f\in D(\bm A^{-\frac{1}{2}})$, 
$\nabla_{\a}(n^{-1}+\bm{A})^{-\frac12}\bm f$ converges to
$\nabla_{\a}\bm{A}^{-\frac12}\bm f$ in $L^2(\R^d;\C^m)$. For this purpose, we fix $\f\in D(\bm A^{-\frac{1}{2}})$. Then, there exists $\g\in D(\bm A^\frac12)$ such that $\f={\bm A}^{\frac{1}{2}}\g$. Applying the same argument as above, it can be shown that $(n^{-1}+\bm{A})^{-\frac{1}{2}}\f=(n^{-1}+\bm{A})^{-\frac{1}{2}}{\bm A}^{\frac{1}{2}}\g$ converges to $\g$ in $L^2(\R^d;\C^m)$ as $n$ tends to $\infty$.
Since $\nabla_{\bm a}$ is a closed operator, we deduce that
$\nabla_{\bm a}(n^{-1}+\bm{A})^{-\frac{1}{2}}\f$ converges in $L^2(\R^d;\C^m)$ to $\nabla_{\bm a}\g=\nabla_{\bm a}\bm{A}^{-\frac{1}{2}}\f$.

To complete the proof, we consider the second family of operators in the statement of the proposition, which are well defined and bounded on $L^2(\R^d;\C^m)$ since, by Remark \ref{rem:Lp:square:root:scalar}(iii), $D(\bm{A})\subset D(\bm{L}^{\frac{1}{2}})$  and
\begin{align}
\|\bm{L}^{\frac12}((m^{-1}+\bm{A})^{-\frac12}\bm f-(n^{-1}+\bm{A})^{-\frac12}\bm f)\|_2^2
\le  \|\bm{A}^\frac12((m^{-1}+\bm{A})^{-\frac12}\bm f-(n^{-1}+\bm{A})^{-\frac12}\bm f)\|_2^2,
\label{stima-1}
\end{align}
\begin{equation}\label{eq:bound:2-2:con:eps}
\|\bm{L}^{\frac12}(n^{-1}+\bm{A})^{-\frac12}\f\|^2_2 \le \|\bm{A}^{\frac12}(n^{-1}+\bm{A}
)^{-\frac12}\f\|^2_2 \le \|\f\|^2_2
\end{equation}
for every $m,n\in\N$ and every $\f\in L^2(\R^d;\C^m)$. Estimate \eqref{stima-1} combined with the first part of the proof shows that $(\bm{L}^{\frac12}(n^{-1}+\bm{A})^{-\frac12}\bm f)_{n\in\N}$ is a Cauchy sequence in $L^2(\R^d;\C^m)$. Hence, it converges as $n$ tends to infinity and the norm of the limit does not exceed the norm of $\f$ due to \eqref{eq:bound:2-2:con:eps}. 

The proof that, for every $\bm f\in D(\bm A^{-\frac12})$, the $L^2(\R^d;\C^m)$-limit of $\bm L^\frac12(n^{-1}+\bm A)^{-\frac12}\bm f$ as $n$ tends to infinity coincides with $\bm L^\frac12 \bm A^{-\frac12}\bm f$ can be obtained as above. 
\end{proof}

\begin{defi}
In view of Proposition $\ref{prop:conv_nablaH-12}$, we denote by 
$\nabla_{\bm a}{\bm A}^{-\frac{1}{2}}$ and ${\bm L}^{\frac{1}{2}}{\bm A}^{-\frac{1}{2}}$ the extensions of these operators to $L^2(\R^d;\C^m)$.
\end{defi}

\begin{rem}
In \cite[Lemma 2.21]{dziubanski}, the author proves the boundedness of the operator $(-\Delta)^{\frac12}H^{\frac12}$ on $L^2(\mathbb{R}^d)$ under the assumptions that $d\ge 3$, using a different argument. Clearly, Proposition~\ref{prop:conv_nablaH-12} also applies to the setting considered in \cite{dziubanski}, with no restriction on the dimension $d$.
\end{rem}

\begin{cor}
For every $\f\in L^2(\R^d;\C^m)$, the following factorization
\begin{equation}\label{eq:factorization}
\nabla_{\a}{\bm A}^{-\frac12}\f = (\nabla_{\a}\bm{L}^{-\frac12}){\bm L}^\frac12{\bm{A}}^{-\frac12}\f
\end{equation}
holds almost everywhere on $\R^d$.
\end{cor}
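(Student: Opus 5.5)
Both sides of \eqref{eq:factorization} are bounded operators on $L^2(\R^d;\C^m)$: the left-hand side by Proposition~\ref{prop:conv_nablaH-12}, and the right-hand side as the composition of the contraction ${\bm L}^{\frac12}{\bm A}^{-\frac12}$ (again Proposition~\ref{prop:conv_nablaH-12}) with the contraction $\nabla_{\a}\bm L^{-\frac12}$ of Remark~\ref{rem:Lp:square:root:scalar}(ii). It therefore suffices to prove the identity on a dense subset of $L^2(\R^d;\C^m)$ and then pass to the limit. I will use the approximations $(n^{-1}+\bm A)^{-\frac12}$, which are defined on all of $L^2$, and show the identity for every $\f$ with the regularized operators before letting $n\to\infty$.

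Fix $\f\in L^2(\R^d;\C^m)$ and set $\g_n=(n^{-1}+\bm A)^{-\frac12}\f$. As in the proof of Proposition~\ref{prop:conv_nablaH-12}, $\g_n\in D(\bm A^{\frac12})=D(\mathfrak a)\subset \textup{H}^1_{\a}(\R^d;\C^m)=D(\bm L^{\frac12})$. The key step is the elementary identity
\[
\nabla_{\a}\g_n=(\nabla_{\a}\bm L^{-\frac12})\bm L^{\frac12}\g_n \qquad\text{for every }\g_n\in D(\bm L^{\frac12}).
\]
By Remark~\ref{rem:Lp:square:root:scalar}(iv), $\bm L^{\frac12}$ is injective, so $\bm L^{-\frac12}$ is defined on $R(\bm L^{\frac12})$ and $\bm L^{-\frac12}\bm L^{\frac12}\g_n=\g_n$. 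On this range $\nabla_{\a}\bm L^{-\frac12}$ acts literally as $\nabla_{\a}$ composed with $\bm L^{-\frac12}$. One point needs checking: its bounded extension (a contraction, since $\|\nabla_{\a}\bm u\|_2=\|\bm L^{\frac12}\bm u\|_2$) must agree with the original operator on the range. This holds because the extension is unique. I expect this to be the only delicate point, namely making sure that the extension in Remark~\ref{rem:Lp:square:root:scalar}(ii) really is the extension from $R(\bm L^{\frac12})$, whose closure is all of $L^2$ by injectivity and self-adjointness. If one prefers, the same regularization $(n^{-1}+\bm L)^{-\frac12}$ applied to $\bm L$ gives this directly, together with closedness of $\nabla_{\a}$.

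Hence $\nabla_{\a}(n^{-1}+\bm A)^{-\frac12}\f=(\nabla_{\a}\bm L^{-\frac12})\,\bm L^{\frac12}(n^{-1}+\bm A)^{-\frac12}\f$ for every $n$. Letting $n\to\infty$, the left-hand side converges in $L^2$ to $\nabla_{\a}\bm A^{-\frac12}\f$ by Proposition~\ref{prop:conv_nablaH-12} and the Definition following it. On the right-hand side, $\bm L^{\frac12}(n^{-1}+\bm A)^{-\frac12}\f\to\bm L^{\frac12}\bm A^{-\frac12}\f$ in $L^2$ by the same proposition, and applying the bounded operator $\nabla_{\a}\bm L^{-\frac12}$ preserves this convergence. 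The two $L^2$-limits therefore coincide, which gives \eqref{eq:factorization} almost everywhere on $\R^d$.
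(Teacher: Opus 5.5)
Your proposal is correct and follows essentially the same route as the paper. You write $\nabla_{\a}(n^{-1}+\bm A)^{-\frac12}\f=(\nabla_{\a}\bm L^{-\frac12})\bm L^{\frac12}(n^{-1}+\bm A)^{-\frac12}\f$ and then let $n\to\infty$, using Proposition~\ref{prop:conv_nablaH-12} and the $L^2$-contractivity of $\nabla_{\a}\bm L^{-\frac12}$.

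Your check that the bounded extension of $\nabla_{\a}\bm L^{-\frac12}$ agrees with $\nabla_{\a}\circ\bm L^{-\frac12}$ on $R(\bm L^{\frac12})$ is a detail the paper leaves implicit. Your opening reduction to a dense subset is unnecessary, since the regularization already handles every $\f$ directly.
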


\begin{proof}
For every $\bm f\in L^2(\R^d;\C^m)$, we can write
\begin{align*}
\nabla_{\a}{\bm A}^{-\frac12}\bm f
= \lim_{n\to\infty}\nabla_{\a} (n^{-1}+\bm{A})^{-\frac12}\bm f 
= \lim_{n\to\infty}
\nabla_{\a} \bm{L}^{-\frac12} \bm{L}^\frac12(n^{-1}+\bm{A})^{-\frac12}\bm f,
\end{align*}
where the limit is meant in $L^2(\R^d;\C^m)$. Since $\nabla_{\a} \bm{L}^{-\frac12}$ is a  contraction on $L^2(\R^d;\C^m)$ (see  Remark \ref{rem:Lp:square:root:scalar} (ii)), we obtain easily \eqref{eq:factorization} from Proposition \ref{prop:conv_nablaH-12}. 
\end{proof}

The diamagnetic inequality \eqref{eq:diamagnetic:inequality}, formula \eqref{eq:rep:e+H:-1/2} and Lemma \ref{lemma:rappr_misurabile} yield easily the following domination result.
\begin{lemma}
For every $\bm f\in L^2(\R^d;\C^m)$, every $n\in\N$, every $\alpha\in (0,1)$ and almost every $x\in\R^d$, we have
\begin{align}
\label{cons:diam}
\|((n^{-1}+\bm{A})^{-\alpha}\bm f)(x)\| 
\leq ((n^{-1}+H)^{-\alpha}\|\bm f\|)(x).
\end{align}
\end{lemma}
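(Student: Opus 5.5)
The plan is to obtain \eqref{cons:diam} by integrating the pointwise diamagnetic inequality \eqref{eq:diamagnetic:inequality} in time against the positive weight $\Gamma(\alpha)^{-1}t^{\alpha-1}e^{-t/n}$. This is possible because both resolvent powers admit the subordination formula \eqref{eq:rep:e+H:-1/2}. For $H$ the same formula holds, since $H$ is a nonnegative self-adjoint operator on $L^2(\R^d;\C)$, so the same functional calculus argument (\cite[Proposition 3.3.5]{haase2006}) applies. Hence
\begin{equation*}
(n^{-1}+H)^{-\alpha}\|\f\| = \frac1{\Gamma(\alpha)}\int_0^\infty t^{\alpha-1}e^{-\frac tn}e^{-tH}\|\f\|\,dt,
\end{equation*}
and $\|\f\|\in L^2(\R^d)$ whenever $\f\in L^2(\R^d;\C^m)$.

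The key step is to pass from these Bochner/improper $L^2$-valued integrals to pointwise a.e.\ statements, since norms must be taken inside the $t$-integral at a.e.\ fixed $x$. For this I will use Lemma~\ref{lemma:rappr_misurabile} (and Proposition~\ref{proposition:measurability} on the $\bm A$ side). These give jointly measurable representatives $\widetilde{\bm u}(t,x)$ of $e^{-t\bm A}\f$ and $\widetilde w(t,x)$ of $e^{-tH}\|\f\|$, such that for a.e.\ $x$
\begin{equation*}
((n^{-1}+\bm A)^{-\alpha}\f)(x)=\frac1{\Gamma(\alpha)}\int_0^\infty t^{\alpha-1}e^{-\frac tn}\widetilde{\bm u}(t,x)\,dt,
\end{equation*}
with the analogous identity for $H$. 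Integrability in $t$ for a.e.\ $x$ follows from the Gaussian bound \eqref{stima_exp_nuclei_q}, exactly as in the argument leading to \eqref{lene}. For the scalar semigroup it follows from $0\le e^{-tH}\|\f\|\le e^{t\Delta}\|\f\|$ and $\|e^{t\Delta}g\|_\infty\le (4\pi t)^{-d/4}\|g\|_2$, together with the factor $e^{-t/n}$ near infinity. If one prefers to avoid the representation lemma, the fallback is to test against $\varphi\in L^2$ with $\varphi\ge0$ and integrate. The bound $|\langle(n^{-1}+\bm A)^{-\alpha}\f,\bm g\rangle|$ for $\bm g=\varphi\,\bm\xi$ with $\bm\xi$ a unit-norm measurable field then yields the pointwise bound by density, but the representation route is more direct.

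Next I fix a null set outside of which three things hold: the representations above hold, and $\|\widetilde{\bm u}(t,x)\|\le \widetilde w(t,x)$ for a.e.\ $t$. This last fact comes from \eqref{eq:diamagnetic:inequality}, which holds for each $t$ a.e.\ in $x$; by joint measurability and Fubini it holds a.e.\ in $(t,x)$, hence for a.e.\ $x$ it holds for a.e.\ $t$. Then Minkowski's inequality for the $\C^m$-valued integral gives
\begin{equation*}
\|((n^{-1}+\bm A)^{-\alpha}\f)(x)\|\le \frac1{\Gamma(\alpha)}\int_0^\infty t^{\alpha-1}e^{-\frac tn}\|\widetilde{\bm u}(t,x)\|dt\le \frac1{\Gamma(\alpha)}\int_0^\infty t^{\alpha-1}e^{-\frac tn}\widetilde w(t,x)dt,
\end{equation*}
and the right-hand side equals $((n^{-1}+H)^{-\alpha}\|\f\|)(x)$. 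This is \eqref{cons:diam}.

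The main obstacle is the measure-theoretic step: identifying the $L^2$-valued improper integral with the pointwise $t$-integral of a jointly measurable representative, and transferring the a.e.-in-$x$ (for each $t$) inequality to an a.e.-in-$(t,x)$ one. Both points are settled by Lemma~\ref{lemma:rappr_misurabile}, applied on each interval $(\varepsilon,T)$, combined with monotone convergence as $\varepsilon\to0$ and $T\to\infty$ using the integrable majorant.
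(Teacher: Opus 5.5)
Your proposal is correct and follows the paper's route. The paper obtains the lemma in one line from \eqref{eq:diamagnetic:inequality}, the subordination formula \eqref{eq:rep:e+H:-1/2} and Proposition~\ref{lemma:rappr_misurabile}; you supply the measure-theoretic details (jointly measurable representatives, Tonelli to get the domination a.e.\ in $(t,x)$, then the triangle inequality for the $\C^m$-valued $t$-integral).

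One slip needs fixing. The bound $\|e^{t\Delta}g\|_\infty\lesssim t^{-d/4}\|g\|_2$, like the Gaussian-plus-H\"older bound on the $\bm A$ side, gives integrability near $t=0$ only when $\alpha>d/4$. Instead, get a.e.\ finiteness from Minkowski's integral inequality, $\big\|\int_0^\infty t^{\alpha-1}e^{-t/n}|\widetilde w(t,\cdot)|\,dt\big\|_2\le \Gamma(\alpha)n^{\alpha}\|\f\|_2$. The domination $\|\widetilde{\bm u}\|\le\widetilde w$ then handles $\widetilde{\bm u}$ as well, and no truncation to $(\varepsilon,T)$ is needed.
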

 
In the last part of this section, we use the following representation formulae for the square root
\begin{equation}
\label{eq:repres:H+eps:12}
(n^{-1}+\bm{A})^{\frac12}\f   = \frac{1}{2\sqrt\pi}\int_0^\infty t^{-\frac32}(1-e^{-t(n^{-1}+\bm{A})})\f dt,\qquad\;\,\f\in D(\bm{A}),\;\, n\in\N,
\end{equation}
and 
\begin{equation}
\label{eq:repres:-Delta:12}
\bm{L}^{\frac12}\f   = \frac{1}{2\sqrt\pi}\int_0^\infty t^{-\frac32}(1-e^{-t \bm{L}})\f dt,\qquad\;\,\f\in D(\bm{L}),
\end{equation}
where the integrals are understood in the improper sense. These integral representations for the square root is due to V. Balakrishnan, see for instance \cite[Chapter IX, Section 11]{yosida}. It is possible to obtain the same representations using the Bochner-Phillips functional calculus, cf. \cite{schilling}, since the function $\lambda\mapsto\lambda^{\frac12}$ has a representation of the form
\begin{align*}
    \lambda^{\frac12} = \frac1{2\sqrt{\pi}} \int_0^\infty t^{-\frac{3}{2}}(1-e^{-\lambda t})dt,\qquad\;\,\lambda\in\C,\;\operatorname{Re}\lambda>0.
\end{align*} 

Proposition \ref{prop:smgr_L1}(ii), applied with $V=0$, shows that the semigroup $(e^{-t\bm{L}})_{t\ge 0}$ extends to a strongly continuous semigroup of contractions $(e^{-t\bm{L}_p})_{t\ge 0}$
in $L^p(\R^d;\C^m)$ for every $p\in [1,\infty)$. In the following lemma we prove a partial characterization of $D(\bm{L}_1)$, which is crucial for the proof of Proposition \ref{prop:bdd:1-1}. 

\begin{lemma}
\label{lemma:sole}
Let $\a\in L^q_{\rm loc}(\R^d;\R^d)$ for some $q\in (2,\infty)$ be such that $\operatorname{div}\a\in L^s_{\rm loc}(\R^d)$ for some $s\in (1,\infty)$. Then, the set $\{\bm u\in L^1(\R^d;\C^m)\cap L^\infty(\R^d;\C^m)\cap \textup{H}^1_\a(\R^d;\C^m): \bm{\Delta}_\a \uu\in L^1(\R^d;\C^m)\}$ is contained in $D(\bm{L}_1)$.
\end{lemma}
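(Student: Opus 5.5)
The plan is a duality argument. By the consistency of the semigroups, $-\bm L_1$ is the generator of the $L^1$-extension, and its adjoint $\bm L_1^*$ is the weak$^*$-generator of the adjoint semigroup on $L^\infty(\R^d;\C^m)$. Since $\bm L$ is self-adjoint on $L^2$, this adjoint semigroup is again the (weak$^*$-continuous) extension of $(e^{-t\bm L})_{t\ge0}$ to $L^\infty$. So it suffices to show the following. Let $\uu$ be in the set of the statement and put $\g:=-\bm\Delta_\a\uu\in L^1(\R^d;\C^m)$. Then for every $\bm\phi$ in a suitable core for the generator of the dual semigroup on $L^\infty$ we want
\begin{equation*}
\int_{\R^d}\langle \uu,\bm L_\infty\bm\phi\rangle\,dx=\int_{\R^d}\langle \g,\bm\phi\rangle\,dx .
\end{equation*}
Once this holds, the standard characterisation $D(\bm L_1)=\{\uu:\exists\g\in L^1 \text{ with } \langle \uu,\bm L_1^*\bm\phi\rangle=\langle\g,\bm\phi\rangle \ \forall \bm\phi\in D(\bm L_1^*)\}$ gives $\uu\in D(\bm L_1)$ and $\bm L_1\uu=\g$.

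An equivalent route, which I find more concrete, is the resolvent one. Set $\bm w:=(1+\bm L_1)^{-1}(\uu+\g)\in D(\bm L_1)$; the aim is to show $\bm w=\uu$. The function $\bm z:=\uu-\bm w$ satisfies $\bm z-\bm\Delta_\a\bm z=0$ in $\mathcal D'$, provided we know that elements of $D(\bm L_1)$ satisfy $\bm L_1\bm w=-\bm\Delta_\a\bm w$ distributionally. This is a consequence of Proposition~\ref{prop:smgr_L1}(v) with $V=0$ together with density in the graph norm, since $\bm\Delta_\a$ maps $L^1$ into distributions continuously. That continuity is where the hypotheses on $\a$ enter. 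Writing
\begin{equation*}
\Delta_\a f=\Delta f-2i\,\a\cdot\nabla f - i(\operatorname{div}\a) f-|\a|^2 f,
\end{equation*}
and testing against $\varphi\in C_c^\infty$ after integrating by parts, one gets $\int f\,\overline{\Delta_\a\varphi}$. This is continuous in $f\in L^1$ only if $\a\cdot\nabla\varphi$, $(\operatorname{div}\a)\varphi$ and $|\a|^2\varphi$ are bounded, which fails in general. So I would instead aim for uniqueness in a class that contains $\bm z$: namely $L^1$ functions $\bm z$ with $\nabla_\a z_j\in L^1_{\rm loc}$ and $|\a|^2\bm z\in L^1_{\rm loc}$, solving $\bm z-\bm\Delta_\a\bm z=0$.

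The key step is a \emph{Kato inequality}. For each component $z_j$ the aim is
\begin{equation*}
\Delta|z_j|\ \ge\ \re\big(\operatorname{sign}(\overline{z_j})\,\Delta_\a z_j\big)=|z_j|\quad\text{in }\mathcal D'.
\end{equation*}
I would prove it by regularising $\a$ by mollification: $\a_\varepsilon\to\a$ in $L^q_{\rm loc}$ and $\operatorname{div}\a_\varepsilon\to\operatorname{div}\a$ in $L^s_{\rm loc}$. I would combine this with the standard $(|z|^2+\delta^2)^{1/2}$ approximation, as in Lemma~\ref{lem:first:diamagnetic}. Kato's classical computation for $\a\in C^1$ \cite{kato:1972} then yields the inequality for $\a_\varepsilon$ with error terms of the form $(\a-\a_\varepsilon)\cdot\nabla z$, $(\operatorname{div}\a-\operatorname{div}\a_\varepsilon)z$ and $(|\a|^2-|\a_\varepsilon|^2)z$. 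These must vanish in $L^1_{\rm loc}$. This forces us to know $z\in L^{r}_{\rm loc}$ and $\nabla z\in L^{r}_{\rm loc}$ with exponents dual to $q$ and $s$. For $\uu$ we have this from $\uu\in L^\infty\cap \textup{H}^1_\a$, using $q>2$ and $s>1$. For $\bm w$ it requires local regularity of elements of $D(\bm L_1)$ produced by resolvents. To obtain it I would apply the Kato inequality not to $\bm z$ but directly to the difference of approximants $\uu-(1+\bm L)^{-1}(\uu+\g_n)$ with $\g_n\in L^1\cap L^2$ approximating $\g$; these lie in $\textup{H}^1_\a\cap L^\infty$ locally after truncation. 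This is the step I expect to be the main obstacle.

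Granted Kato's inequality, $\|\bm z\|$, or each $|z_j|$, is a nonnegative $L^1$ function with $(1-\Delta)|z_j|\le0$ in $\mathcal D'$. Convolving with the Gauss–Weierstrass resolvent $(1-\Delta)^{-1}$, which is positive, shows $|z_j|\le 0$. Hence $\bm z=\bm 0$, so $\uu=\bm w\in D(\bm L_1)$.
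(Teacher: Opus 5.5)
Your final plan is correct and is essentially the paper's proof. The paper also sets $\f=\uu-\bm\Delta_\a\uu$, approximates it in $L^1$ by $\f_n\in C_c^\infty(\R^d;\C^m)$, and applies Kato's inequality to $\uu-(1+\bm L_1)^{-1}\f_n$. It never applies it to $\uu-(1+\bm L_1)^{-1}\f$, precisely to avoid the regularity problem you identified. It then passes to the limit in $L^1$ to get $\Delta|u_j-v_j|\ge|u_j-v_j|$ in $\mathcal D'$ and concludes with Kato's positivity lemma. The duality route you sketch first is not needed.

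The one genuine difference is how Kato's inequality is proved for rough $\a$. The paper mollifies $\uu$, shows that commutators such as $(\a\cdot\nabla u_j)\star\varrho_n-\a\cdot\nabla(u_j\star\varrho_n)$ vanish in $L^1_{\rm loc}$, and then only needs the pointwise inequality for $C^2$ functions. You instead mollify $\a$ and invoke Kato's distributional inequality for $C^1$ potentials. This also works: the errors $(\a-\a_\varepsilon)\cdot\nabla z$ and $(|\a|^2-|\a_\varepsilon|^2)z$ vanish in $L^1_{\rm loc}$ for bounded $z$ with $\nabla z\in L^2_{\rm loc}$. The divergence error even drops out after taking real parts against $\operatorname{sign}(\overline z)$, although $\operatorname{div}\a\in L^s_{\rm loc}$ is still needed to ensure $\Delta_{\a_\varepsilon}z\in L^1_{\rm loc}$. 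Both versions use the hypotheses on $\a$ in the same way.

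Two small corrections. First, take the approximants $\g_n$ bounded (e.g.\ in $C_c^\infty$) rather than merely in $L^1\cap L^2$. Then $(1+\bm L)^{-1}(\uu+\g_n)$ is bounded by the domination $\|(1+\bm L)^{-1}\bm h\|\le(1-\Delta)^{-1}\|\bm h\|$, whereas truncating the approximants themselves would break the equation. Second, the final step ``convolve with $(1-\Delta)^{-1}$'' needs the usual justification, since $(1-\Delta)|z_j|$ is only a nonpositive distribution. One way is to mollify $|z_j|$ first so that every term lies in $L^1$; this is what Kato's lemma provides.
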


\begin{proof}
Let $\uu$ and $\a$ be as in the statement of the lemma. We split the proof into two steps. In the first step, we prove that 
\begin{align}
\label{diamagnetica_epsilon}
\Delta |u_j|\geq {\rm Re}(|u_j|^{-1}\overline{u_j}\Delta_{\a}u_j)
\end{align}
for every $j\in\{1,\ldots,m\}$. In the second step, we complete the proof.

{\em Step 1}. To begin with, we observe that $\nabla \bm u=\nabla_\a\bm u-i\a \bm u\in L^2_{\rm loc}(\R^d;\C^{dm})$. 

Let $(\varrho_n)_{n\in\N}$ be a standard sequence of mollifiers. Clearly, $\bm u\star\varrho_n$ converges to $\bm u$ in $L^r(\R^d;\C^m)$ for every $r\in[1,\infty)$. Fix $j\in\{1,\ldots,m\}$. Since
$\Delta_\a u_j=\Delta u_j-2i\a\cdot \nabla u_j-i(\operatorname{div}\a)u_j-|\a|^2u_j$ in the distributional sense and  $(\Delta u_j)\star\varrho_n=\Delta (u_j\star\varrho_n)$, it follows that 
\begin{align*}
(\Delta_\a u_j)\star\varrho_n-\Delta_\a(u_j\star\varrho_n)
= &  -2i[(\a\cdot \nabla u_j)\star\varrho_n-(\a\cdot \nabla(u_j\star\varrho_n))]\\
&-i[((\operatorname{div}\a)u_j)\star\varrho_n-(\operatorname{div} \a)(u_j\star\varrho_n)]\\
&-[(|\a|^2u_j)\star\varrho_n-|\a|^2(u_j\star\varrho_n)]
\end{align*}
for every $n\in\N$.
The assumptions on $\a$ and on $\bm u$ guarantee that the functions
$\a\cdot\nabla u_j$, $(\operatorname{div}\a)u_j$ and $|\a|^2u_j$ belong to $L^1_{\rm loc}(\R^d;\C)$ and that the right-hand side of the previous equality converges to zero in $L^1_{\rm loc}(\R^d;\C)$ as $n$ tends to $\infty$. It thus follows that $\Delta_{\a}(u_j\star\varrho_n)$ converges to $\Delta_\a u_j$ in $L^1_{\rm loc}(\R^d;\C)$ as $n$ tends to $\infty$.

We can now prove \eqref{diamagnetica_epsilon}, adapting the arguments in the proof of \cite[Lemma A]{kato:1972}. For this purpose, we recall that, if 
$v\in C^2(\R^d;\C)$, then the proof of \cite[Lemma 3]{kato:1972} shows that 
\begin{align}
\Delta\varphi_{\varepsilon}(v)\geq {\rm Re}(\varphi_{\varepsilon}(v)^{-1}\overline v\Delta_{\a}v), \qquad     \varepsilon>0,
\label{diamagnetic-kato}
\end{align}
where we set $\varphi_{\varepsilon}(v)=(|v|^2+\varepsilon)^\frac{1}{2}$.
Applying \eqref{diamagnetic-kato} with $v=u_j\star\varrho_n$ we obtain
\begin{align}
\label{diamagnetica_rho_epsilon}
\Delta \varphi_{\varepsilon}(u_j\star\varrho_n)\geq {\rm Re}(|\varphi_{\varepsilon}(u_j\star\varrho_n)|^{-1}(\overline u_j\star\varrho_n)\Delta_{\a}(u_j\star\varrho_n)).
\end{align}
Since $\varphi_{\varepsilon}$ is Lipschitz continuous, $\varphi_{\varepsilon}(u_j\star\varrho_n)$ converges to $\varphi_{\varepsilon}(u_j)$ in $L^1_{\rm loc}(\R^d;\C)$ as $n$ tends to $\infty$. Consequently, $\Delta \varphi_{\varepsilon}(u_j\star\varrho_n)$ converges to $\Delta\varphi_{\varepsilon}(u_j)$ in the sense of distributions. Similarly, the right-hand side of \eqref{diamagnetica_rho_epsilon} converges to 
${\rm Re}(|\varphi_{\varepsilon}u_j|^{-1}\overline{ u_j}\Delta_{\a}u_j)$ in $L^1_{\rm loc}(\R^d;\C)$ as $n$ tends to $\infty$.

Letting $n$ tend to $\infty$ in \eqref{diamagnetica_rho_epsilon} and then $\varepsilon$ to $0^+$, we obtain \eqref{diamagnetica_epsilon}.

{\em Step 2.} We now complete the proof, showing that $\uu$ belongs to $D(\bm{L}_1)$.
Set $\bm f=\bm u-\Delta_\a \bm u$, which is a function in $L^1(\R^d;\C^m)$, and let $(\bm f_n)_{n\in\N}\subset C^{\infty}_c(\R^d;\C^m)$ be a sequence which converges to  $\bm f$ in $L^1(\R^d;\C^m)$. For every $n\in\N$, set $\bm v_n=(1+\bm L_1)^{-1}\bm f_n$. Clearly, $\bm v_n$ converges to $\bm v=(1+\bm L_1)^{-1}\bm f$ in $D(\bm L_1)$. 

We note that $\bm v_n\in L^{\infty}(\R^d;\C^m)$, since by the classical diamagnetic inequality for the magnetic Laplacian (which follows also from \eqref{weak-diamagnetic}, taking $V\equiv 0$) we infer that $\|(1+\bm{L}_1)^{-1}\f_n\|\le (1-\Delta)^{-1}\|\f_n\|$ almost everywhere in $\R^d$, and the Gauss-Weierstrass semigroup maps $L^{\infty}(\R^d;\C^m)$ into itself. Moreover, from the consistency of the semigroups $\bm \Delta_\a\bm v_n=-\bm L_1\bm v_n\in L^1(\R^d;\C^m)$ and $\bm v_n\in \textup{H}^1_\a(\R^d;\C^m)$. 

Applying Step 1 to the function $\bm w_n=\bm u-\bm v_n$, which solves $\bm w-\bm \Delta_\a\bm w=\bm f-\bm f_n$, we obtain
\begin{align*}
\Delta |w_{n,j}|\geq {\rm Re}(|w_{n,j}|^{-1}\overline{w_{n,j}}\Delta_\a w_{n,j})
= |w_{n,j}|-{\rm Re}(|w_{n,j}|^{-1}\overline{w_{n,j}}(f_j-f_{n,j}))
\end{align*}
for every $j\in\{1,\ldots,m\}$.

Fix $j$. Since $\bm{w}_n$ converges to $\bm u-\bm{v}$ in $L^1(\R^d;\C^m)$, as $n$ tends to $\infty$, it follows that  $\Delta|w_{n,j}|$ converges to $\Delta|u_j-v_j|$ in the sense of distributions, while ${\rm Re}(|w_{n,j}|^{-1}\overline{w_{n,j}}(f_j-f_{n,j}))$ converges to zero in $L^1(\R^d;\C)$. We have so proved that $\Delta|u_j-v_j|\geq |u_j-v_j|$ in the sense of distributions. By \cite[Lemma 2]{kato:1986}, this implies $|u_j-v_j|=0$, i.e., $u_j=v_j$ almost everywhere in $\R^d$. We have so proved that $\uu=\bm v$, i.e., that $\bm u$ belongs to $D(\bm{L}_1)$ as claimed.
\end{proof}

In view of Lemma \ref{lemma:sole}, we refine Hypothesis \ref{hyp-operatore}(i) as follows.
\begin{hyp}
\label{hyp:a_regolare}
$\a\in L^q_{\rm loc}(\R^d;\R^d)$ for some $q\in (2,\infty)$ and $\operatorname{div}\a\in L^s_{\rm loc}(\R^d)$ for some $s\in (1,\infty)$.
\end{hyp}
In the rest of the section, we assume Hypotheses \ref{hyp-operatore}, \ref{hyp:avl_potenziale} and \ref{hyp:a_regolare}.

\begin{rem}
\begin{enumerate}[\rm (i)]
\item If $d=1$, then we can consider the weaker assumption $\bm a\in W^{1,1}_{\rm loc}(\R)$.
\item It is easy to see that, Hypothesis \ref{hyp:a_regolare} is verified if $\a\in W^{1,r}(\R^d;\R^d)$, with $r>1$, if $d=2$, and with $r>\frac{2d}{d+2}$, if $d\geq 3$. 
\item Inequality \eqref{diamagnetica_epsilon} has been proved in a more general form in \cite{kato:1972} for functions $v\in L^1_{\rm loc}(\R^d;\C)$ such that $\Delta_\a v\in L^1_{\rm loc}(\R^d;\C)$ under the assumption $\a\in C^1(\R^d;\R^d)$. Since we are interested in \eqref{diamagnetica_epsilon} for smoother functions $v$, then we are able to weaken the conditions on $\a$.
\end{enumerate}
\end{rem}

\begin{prop}
\label{prop:bdd:1-1}
The operator $\bm L^\frac12{\bm{A}}^{-\frac12}$ has a unique extension to a bounded operator on $L^1(\R^d;\C^m)$ and
\begin{equation}\label{eq:stima:L1-L1}
\|\bm L^\frac12{\bm{A}}^{-\frac12}\f\|_1 \le (1+\kappa)\|\f\|_1, \qquad\;\, \bm f\in L^1(\R^d;\C^m),
\end{equation}
where $\kappa$ is the constant in Hypothesis $\ref{hyp:avl_potenziale}$.
\end{prop}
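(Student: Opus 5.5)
We approximate $\bm A^{-\frac12}$ by $(n^{-1}+\bm A)^{-\frac12}$, as in Proposition \ref{prop:conv_nablaH-12}, and write $\bm L = \bm A - V$ on suitable functions. This gives a representation of $\bm L^{\frac12}(n^{-1}+\bm A)^{-\frac12}$ in which the $L^1$-maximal inequality \eqref{eq:maximal_ineq:L1_nuovo} enters. We work with $\bm f\in C^\infty_c(\R^d;\C^m)$ and conclude by density, using that the $L^2$-limit in Proposition \ref{prop:conv_nablaH-12} gives an $L^1$-bound on a dense set.

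\emph{Step 1: Balakrishnan representation.} Set $\bm g_n=(n^{-1}+\bm A)^{-\frac12}\bm f$. We would like to apply \eqref{eq:repres:-Delta:12} to $\bm g_n$, so we need $\bm g_n\in D(\bm L)$ or at least in $D(\bm L_1)$. Here Lemma \ref{lemma:sole} is used. From \eqref{cons:diam}, $\bm g_n$ is dominated by $(n^{-1}+H)^{-\frac12}\|\bm f\|$, which lies in $L^1\cap L^\infty$ (the Gaussian bound gives this on the time integral \eqref{eq:rep:e+H:-1/2}, thanks to $e^{-t/n}$). Moreover $\bm g_n\in D(\mathfrak a)\subset \textup{H}^1_\a$. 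We also need $\bm\Delta_\a\bm g_n=V\bm g_n-\bm A\bm g_n\in L^1$. For this we replace $\bm f$ by a smoother function, e.g. $\bm f\in (1+\bm A_1)^{-1}(C_c^\infty)$, so that $\bm A_1\bm g_n=(n^{-1}+\bm A)^{-\frac12}\bm A_1\bm f\in L^1$, and then use the $L^1$-maximal inequality to get $V\bm g_n\in L^1$. Lemma \ref{lemma:sole} then gives $\bm g_n\in D(\bm L_1)$ with $\bm L_1\bm g_n=-\bm\Delta_\a\bm g_n$.

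\emph{Step 2: the key estimate.} We write
\[
\bm L^{\frac12}\bm g_n=\frac{1}{2\sqrt\pi}\int_0^\infty t^{-\frac32}(1-e^{-t\bm L})\bm g_n\,dt,\qquad (1-e^{-t\bm L})\bm g_n=\int_0^t e^{-s\bm L}\bm L_1\bm g_n\,ds .
\]
Splitting the $t$-integral at a level $\tau$, we bound the small-$t$ part by $\tau^{\frac12}\|\bm L_1\bm g_n\|_1$ and the large-$t$ part by $\tau^{-\frac12}\|\bm g_n\|_1$. This only yields an interpolation inequality, not a bound by $\|\bm f\|_1$.

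A more promising route is to compare directly with $\bm A$. Using $\bm L_1\bm g_n=\bm A\bm g_n-V\bm g_n$, we write
\[
\bm L^{\frac12}\bm g_n=\bm L^{-\frac12}\bm L_1\bm g_n=\bm L^{-\frac12}(n^{-1}+\bm A)\bm g_n-\bm L^{-\frac12}(n^{-1}+V)\bm g_n .
\]
The first term should equal $\bm A^{\frac12}$ applied to $\bm f$ "up to $\bm L$ versus $\bm A$" and is not obviously controlled. Instead, we plan the formulation, in the spirit of \cite{dziubanski},
\[
\bm L^{\frac12}(n^{-1}+\bm A)^{-\frac12}\bm f=\bm f_n^{(1)}+\bm f_n^{(2)},
\]
derived from the Balakrishnan formula for $(n^{-1}+\bm A)^{\frac12}$ in \eqref{eq:repres:H+eps:12} and the Duhamel formula
\[
e^{-t\bm L}-e^{-t(n^{-1}+\bm A)}=\int_0^t e^{-(t-s)\bm L}\,(n^{-1}+V)\,e^{-s(n^{-1}+\bm A)}\,ds .
\]
Subtracting the two Balakrishnan integrals and applying them to $\bm g_n$ gives
\[
\bm L^{\frac12}\bm g_n-\bm f=\frac{1}{2\sqrt\pi}\int_0^\infty t^{-\frac32}\int_0^t e^{-(t-s)\bm L}(n^{-1}+V)e^{-s(n^{-1}+\bm A)}\bm g_n\,ds\,dt .
\]
Since $e^{-r\bm L}$ is an $L^1$-contraction, Fubini yields
\[
\|\bm L^{\frac12}\bm g_n\|_1\le\|\bm f\|_1+\frac1{\sqrt\pi}\int_0^\infty s^{-\frac12}\|(n^{-1}+V)e^{-s(n^{-1}+\bm A)}\bm g_n\|_1\,ds .
\]
Here we use $\int_s^\infty t^{-3/2}\,dt=2s^{-1/2}$. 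Finally,
\[
\frac{1}{\sqrt\pi}\int_0^\infty s^{-\frac12}e^{-s(n^{-1}+\bm A)}\bm g_n\,ds=(n^{-1}+\bm A)^{-\frac12}\bm g_n=(n^{-1}+\bm A)^{-1}\bm f .
\]
The remaining norm is bounded by $\|(n^{-1}+V)(n^{-1}+\bm A)^{-1}\bm f\|_1$, provided the $L^1$ norm can be pulled inside. This is the delicate point: we need a pointwise positivity or domination argument rather than the triangle inequality in the wrong direction. By the triangle inequality (norm of integral $\le$ integral of norms) we only get an upper bound involving $\int s^{-1/2}\|(n^{-1}+V)e^{-s(n^{-1}+\bm A)}\bm g_n\|_1\,ds$. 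Using \eqref{eq:Vn:ge:forma}-type bounds $\|V\xi\|\le\kappa\lambda_V\|\xi\|$ and the domination by $H$, we bound this by
\[
\kappa\int s^{-\frac12}\|(n^{-1}+\lambda_V)e^{-s(n^{-1}+H)}(n^{-1}+H)^{-\frac12}\|\bm f\|\|_1\,ds=\kappa\sqrt\pi\,\|(n^{-1}+\lambda_V)(n^{-1}+H)^{-1}\|\bm f\|\|_1,
\]
where positivity of the scalar semigroup makes the integral exact. The scalar $L^1$-maximal inequality (the case $c_1'=1$ in the proof of \eqref{eq:maximal_ineq:L1_nuovo}) bounds the last expression by $\|\bm f\|_1$. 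Altogether $\|\bm L^{\frac12}\bm g_n\|_1\le(1+\kappa)\|\bm f\|_1$.

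\emph{Step 3: conclusion.} We let $n\to\infty$ using Proposition \ref{prop:conv_nablaH-12} ($L^2$-convergence, hence a.e. along a subsequence) and Fatou's lemma, then extend by density. We expect the main obstacle to be justifying the Duhamel and Fubini manipulations in $L^1$ with unbounded $V$. This is handled by the domain information from Step 1 (Lemma \ref{lemma:sole}), or by first replacing $V$ by the bounded approximations $V_n$ and then passing to the limit.
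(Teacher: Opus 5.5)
Your proposal is correct and follows essentially the paper's proof. The shared steps are:
\begin{enumerate}
\item subtracting the Balakrishnan representations of $\bm L^{1/2}$ and $(n^{-1}+\bm A)^{1/2}$;
\item the Duhamel formula for $e^{-t\bm L}-e^{-t(n^{-1}+\bm A)}$, justified through Lemma~\ref{lemma:sole} and the $L^1$-maximal inequality;
\item $L^1$-contractivity of $e^{-t\bm L}$, then Fubini with $\int_s^\infty t^{-3/2}dt=2s^{-1/2}$;
\item diamagnetic domination plus Kato's scalar $L^1$ estimate for $(n^{-1}+\lambda_V)(n^{-1}+H)^{-1}$;
\item Fatou and density.
\end{enumerate}

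The technical differences are these. The paper first applies $e^{-\tau\bm L}$ and lets $\tau\to0^+$ by Fatou at the end, so that Balakrishnan's formula is only used on $D(\bm L)$. Your $\bm g_n$ lies only in $D(\bm L^{1/2})$, so you need the improper-integral version of that formula there, which is still valid by the spectral theorem. The paper also applies Lemma~\ref{lemma:sole} to $(e^{-t\bm L}-e^{-t(n^{-1}+\bm A)})\bm g_n$ for every $t$, not only to $\bm g_n$; this is what the Duhamel step actually requires.
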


\begin{proof}
We will prove that ${\bm L}^{\frac{1}{2}}\bm{A}^{-\frac{1}{2}}\f\in L^1(\R^d;\C^m)$ and \eqref{eq:stima:L1-L1} holds for every $\f$ which belongs to $(1+\bm{A}_1)^{-1}(C^\infty_c(\R^d;\C^m))$ (see Proposition \ref{prop:smgr_L1}). 
The density of this space in $L^1(\R^d;\C^m)$ (see again Proposition \ref{prop:smgr_L1}) will allow us to conclude the proof.

Fix $\bm f\in (1+\bm{A}_1)^{-1}(C^\infty_c(\R^d;\C^m))$, $\tau >0$ and $n\in\N$. Notice that $(n^{-1}+\bm{A})^{-\frac12}\bm f\in D(\bm{A})$ since $(1+\bm{A}_1)^{-1}\bm g=(1+\bm{A})^{-1}\bm g$ for every $\bm g\in C^\infty_c(\R^d;\C^m)$. 
Moreover, the bounded analytic semigroup $(e^{-t \bm{L}})_{t\geq0}$ commutes with $\bm{L}$, then it does with $\bm{L}^{\frac12}$ by \cite[Proposition 3.1.1]{haase2006}. Hence, 
from \eqref{eq:repres:H+eps:12} and \eqref{eq:repres:-Delta:12}, we infer that
\begin{align}
&e^{-\tau \bm{L}}\bm{L}^{\frac12}(n^{-1}+\bm{A})^{-\frac12}\f\notag\\  
= & \bm{L}^{\frac12}e^{-\tau \bm{L}}(n^{-1}+\bm{A})^{-\frac12}\f\notag\\
= & \frac{1}{2\sqrt\pi}\int_{0}^\infty t^{-\frac{3}{2}}(1-e^{-t \bm{L}})e^{-\tau \bm{L}}(n^{-1}+\bm{A})^{-\frac12}\f dt\notag\\
= & \frac{e^{-\tau \bm{L}}}{2\sqrt\pi}\lim_{\delta\to 0}\bigg[\int_{\delta}^\infty t^{-\frac{3}{2}}(1-e^{-t(n^{-1}+\bm{A})})(n^{-1}+\bm{A})^{-\frac12}\f dt\notag\\
&\qquad\qquad\quad - \int_{\delta}^\infty t^{-\frac{3}{2}}(e^{-t \bm{L}}-e^{-t(n^{-1}+\bm{A})})(n^{-1}+\bm{A})^{-\frac12}\f dt\bigg]\notag\\
=& e^{-\tau \bm{L}}\f - \lim_{\delta\to 0}\frac{e^{-\tau \bm{L}}}{2\sqrt\pi}\int_{\delta}^\infty t^{-\frac{3}{2}}(e^{-t \bm{L}}-e^{-t(n^{-1}+\bm{A})})(n^{-1}+\bm{A})^{-\frac12}\f dt.
\label{eq:split:semigruppi}   
\end{align}

To estimate the integral in last side of 
\eqref{eq:split:semigruppi}, we begin by observing that $\f= (1+\bm{A}_1)^{-1}{\bm g}$ for some ${\bm g}\in C^{\infty}_c(\R^d;\C^m)$. From the consistency of the semigroups $(e^{-t\bm L_p})_{t\geq0}$ and $(e^{-t\bm{A}_p})_{t\geq0}$, for $p\in [1,\infty)$, the diamagnetic inequality \eqref{weak-diamagnetic} and the integral representation \eqref{eq:rep:e+H:-1/2} with $\alpha=\frac12$ and $\alpha=1$, it follows that both $e^{-t\bm L}(n^{-1}+\bm A)^{-\frac12}\bm f$ and $e^{-t\bm A}(n^{-1}+\bm A)^{-\frac12}\f$ belong to $L^1(\R^d;\C^m)\cap L^{\infty}(\R^d;\C^m)$ for every $t\in[0,\infty)$ (see Step $2$ in the proof of Lemma \ref{lemma:sole} for further details).
Further, from the integral representation \eqref{eq:rep:e+H:-1/2} of $(n^{-1}+\bm{A})^{-\frac12}$, it follows that
\begin{align*}
& \bm{A}^k_1e^{-t(n^{-1}+\bm{A})}(n^{-1}+\bm{A})^{-\frac12}\bm f= e^{-t(n^{-1}+\bm{A}_1)}(n^{-1}+\bm{A}_1)^{-\frac12}\bm{A}_1^k\bm f, \qquad\;\, t\geq 0,\;\, k=0,1,
\end{align*}
so that the function $t\mapsto e^{-t(n^{-1}+\bm{A})}(n^{-1}+\bm{A})^{-\frac12}\bm f$ belongs to $C([0,\infty);D(\bm{A}_1))$. In particular, \eqref{eq:maximal_ineq:L1_nuovo} with $V+n^{-1}$ instead of $V$ shows that
\begin{align*}
&\|(n^{-1}+V)e^{-t(n^{-1}+\bm{A})}(n^{-1}+\bm{A})^{-\frac12}\bm f\|_1\\
\leq & \kappa\|(n^{-1}+\bm{A}_1)e^{-t(n^{-1}+\bm{A})}(n^{-1}+\bm{A})^{-\frac12}\bm f\|_1
\end{align*}
for every $t\in [0,\infty)$, so that both the functions 
$t\mapsto Ve^{-t(n^{-1}+\bm{A})}(n^{-1}+\bm{A})^{-\frac12}\bm f$ and
$t\mapsto (n^{-1}+V)e^{-t(n^{-1}+\bm{A})}(n^{-1}+\bm{A})^{-\frac12}\bm f$ belong to 
$C([0,\infty);L^1(\R^d;\C^m))$. 

Since ${\bm A}\bm g=-\bm\Delta_{\bm a}\bm g+V\bm g$ in the sense of distributions for every $\bm{g}\in D(\bm{A})$, it follows that 
$\bm{\Delta}_{\bm a} e^{-t(n^{-1}+\bm{A})}(n^{-1}+\bm{A})^{-\frac12}\bm f$ belongs to $C([0,\infty);L^1(\R^d;\C^m))$ as well.

We claim that 
$\uu(t)=(e^{-t \bm{L}}-e^{-t(n^{-1}+\bm{A})})(n^{-1}+\bm{A})^{-\frac12}\bm f$ belongs to $D(\bm{L}_1)$ for every $t\in [0,\infty)$. For this purpose, it suffices to apply Lemma \ref{lemma:sole} since $\uu(t)\in L^1(\R^d;\C^m)\cap L^{\infty}(\R^d;\C^m)$ and $\bm{\Delta}_{\a}\uu(t)\in L^1(\R^d;\C^m)$ for every $t\ge 0$, by the previous part of the proof. Moreover,
since the semigroups in $L^2(\R^d;\C^m)$ are defined through forms, whose domain is contained in $\textup{H}^1_{\a}(\R^d;\C^m)$, $\uu(t)$ belongs to this space as well. 

From the above remarks, we infer  that
the function $t\mapsto (e^{-t \bm{L}}-e^{-t(n^{-1}+\bm{A})})(n^{-1}+\bm{A})^{-\frac12}\bm f$  is a classical solution to the Cauchy problem
\begin{align*}
\left\{
\begin{array}{ll}
\bm u'(t)=-\bm{L}_1 \bm u(t)+(n^{-1}+V)e^{-t(n^{-1}+\bm{A})}(n^{-1}+\bm{A})^{-\frac12}\bm f, & t>0,\\[1mm]
{\bm u}(0)= {\bm 0},
\end{array}
\right.
\end{align*}
in $L^1(\R^d;\C^m)$.
We can thus apply Duhamel formula (\cite[Section 4.2, Corollary 2.2]{pazy}), to obtain that 
\begin{align*}
&(e^{-t \bm{L}}-e^{-t(n^{-1}+\bm{A})})(n^{-1}+\bm{A})^{-\frac12}\bm f\\
=&\int_0^te^{-(t-s)\bm{L_1}}((n^{-1}+V)e^{-s(n^{-1}+\bm{A})}(n^{-1}+\bm{A})^{-\frac12}\bm f)ds
\end{align*}
for every $t\in (0,\infty)$.
By replacing in \eqref{eq:split:semigruppi}, recalling the consistency of the semigroups $(e^{-t\bm L_p})_{t\geq0}$, $p\in[1,\infty)$, taking the modulus and integrating with respect to $x$ on $\R^d$, we infer that
\begin{align*}
& \bigg\|e^{-\tau \bm{L}}\int_\delta^\infty t^{-\frac{3}{2}}(e^{-t\bm{L}}-e^{-t(n^{-1}+\bm{A})})(n^{-1}+\bm{A})^\frac12\bm f dt\bigg\|_1  \\
\leq & \int_\delta^\infty t^{-\frac{3}{2}}dt\int_0^t\|e^{-(\tau+t-s)\bm{L_1}}(n^{-1}+V)e^{-s(n^{-1}+\bm{A})}(n^{-1}+\bm{A})^{-\frac12}\bm f\|_1ds  \\
\leq & \int_\delta^\infty t^{-\frac{3}{2}}dt\int_0^t \|(n^{-1}+V)e^{-s(n^{-1}+\bm{A})}(n^{-1}+\bm{A})^{-\frac12}\bm f\|_1ds,
\end{align*}
since $e^{-t\bm{L_1}}$ is a contraction on $L^1(\R^d;\C^m)$ for every $t\geq0$. Further, from Hypothesis \ref{hyp:avl_potenziale}, estimates \eqref{eq:diamagnetic:inequality} and \eqref{cons:diam} and formula \eqref{eq:rep:e+H:-1/2}, it follows that
\begin{align*}
& \int_{\delta}^\infty t^{-\frac{3}{2}}dt\int_0^t\|(n^{-1}+V)e^{-s(n^{-1}+\bm{A})}(n^{-1}+\bm{A})^{-\frac12}\bm f\|_1dx\\
= & \int_0^\infty ds\int_{\R^d}\|(n^{-1}+V(x))(e^{-s(n^{-1}+\bm{A})}(n^{-1}+\bm{A})^{-\frac12}\bm f)(x)\|dx\int_{\max\{s,\delta\}}^\infty t^{-\frac32}dt  \\
\leq & 2\kappa\int_{\R^d}(n^{-1}+\lambda_V(x))dx\int_0^\infty s^{-\frac{1}{2}}(e^{-s(n^{-1}+H)}\|(n^{-1}+\bm{A})^{-\frac12}\bm f\|)(x)ds \\
\leq & 2\kappa\int_{\R^d}(n^{-1}+\lambda_V(x))dx\int_0^\infty s^{-\frac{1}{2}}(e^{-s(n^{-1}+H)}(n^{-1}+H)^{-\frac12}\|\bm f\|)(x) ds \\
= & 2\kappa\sqrt \pi\|(n^{-1}+\lambda_V)(n^{-1}+H)^{-1}\|\bm f\|\|_1\\
\le & 2\kappa\sqrt \pi\|\bm f\|_1,
\end{align*}
where the last inequality follows from  \cite[Lemma 6]{kato:1986}. From the above estimates we conclude that
\begin{align}
\bigg\| \frac{1}{2\sqrt \pi}e^{-\tau \bm{L}}\int_\delta^\infty t^{-\frac{3}{2}}(e^{-t\bm{L}}-e^{-t(n^{-1}+\bm{A})})(n^{-1}+\bm{A})^\frac12\bm fdt\bigg \|_1
\leq \kappa\|\bm f\|_1
\label{nome}
\end{align}
for every $\delta>0$. By replacing \eqref{nome} in \eqref{eq:split:semigruppi}, we infer that
\begin{align*}
\|e^{-\tau \bm{L}}\bm{L}^{\frac12}(n^{-1}+\bm{A})^{-\frac12}\bm f\|_1
\leq (1+\kappa)\|\bm f\|_1.
\end{align*}

Since $e^{-\tau \bm{L}}\bm{L}^{\frac12}(n^{-1}+\bm{A})^{-\frac12}\f$ converges to $\bm{L}^{\frac12}(n^{-1}+\bm{A})^{-\frac12}\f$ in $L^2(\R^d;\C^m)$ as $\tau$ tends to $0^+$, there exists a decreasing null sequence $(\tau_n)_{n\in\N}$ such that $e^{-\tau_n\bm{L}}\bm{L}^{\frac12}(n^{-1}+\bm{A})^{-\frac12}\f$ converges to $\bm{L}^{\frac12}(n^{-1}+\bm{A})^{-\frac12}\f$ almost everywhere in $\R^d$. By applying Fatou's Lemma, we conclude that
\begin{equation}
\|\bm{L}^{\frac12}(n^{-1}+\bm{A})^{-\frac12}\bm f\|_1\leq (1+\kappa)\|\bm f\|_1.
\label{Fatou}\end{equation}

Since the sequence $\bm{L}^{\frac12}(n^{-1}+\bm{A})^{-\frac12}\bm f)$, converges to the function $\bm L^\frac12{\bm{A}}^{-\frac12}\f$ in $L^2(\R^d;\C^m)$ as $n$ tends to $\infty$, up to a subsequence, the convergence is almost everywhere in $\R^d$. Applying Fatou's lemma to estimate
\eqref{Fatou}, we complete the proof. 
\end{proof}

\begin{theo} 
The operators $\bm L^\frac12\bm A^{-\frac12}$ and $\nabla_\a{\bm{A}}^{-\frac12}\f$ admit bounded extensions to $L^p(\R^d;\C^m)$ for all $p\in(1,2]$. In addition,
\begin{equation*}
\|\bm L^\frac12\bm A^{-\frac12}\f\|_p\leq (1+\kappa)^{\frac{2-p}{p}}\|\f\|_p, \qquad 
\|\nabla_\a{\bm{A}}^{-\frac12}\f\|_p \le c'(1+\kappa)^{\frac{2-p}{p}}\|\f\|_p
\end{equation*}
for every $\f\in L^p(\R^d;\C^m)$, where $\kappa$ is the constant in Hypothesis $\ref{hyp:avl_potenziale}$ and $c'$ is a positive constant.
\end{theo}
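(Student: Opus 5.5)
The plan is to obtain the bound for $\bm L^{\frac12}\bm A^{-\frac12}$ on $L^p$ by interpolating between $L^1$ and $L^2$. We then deduce the bound for $\nabla_\a\bm A^{-\frac12}$ from the factorization \eqref{eq:factorization}, combined with the $L^p$-boundedness of the magnetic Riesz transform $\nabla_\a\bm L^{-\frac12}$.

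\emph{Step 1 (interpolation).} By Proposition \ref{prop:conv_nablaH-12}, $\bm L^{\frac12}\bm A^{-\frac12}$ is a contraction on $L^2(\R^d;\C^m)$. By Proposition \ref{prop:bdd:1-1}, it extends to a bounded operator on $L^1(\R^d;\C^m)$ with norm at most $1+\kappa$. Both extensions are obtained as limits of the same operators $\bm L^{\frac12}(n^{-1}+\bm A)^{-\frac12}$ on the dense set $(1+\bm A_1)^{-1}(C^\infty_c(\R^d;\C^m))\subset L^1\cap L^2$. So the two extensions are consistent on $L^1\cap L^2$, and we may apply the Riesz--Thorin theorem for $\C^m$-valued $L^p$ spaces, for which the scalar proof goes through verbatim. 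With $\frac1p=\frac{1-\theta}{1}+\frac{\theta}{2}$ we get $1-\theta=\frac{2-p}{p}$, hence
\begin{equation*}
\|\bm L^{\frac12}\bm A^{-\frac12}\f\|_p\le (1+\kappa)^{\frac{2-p}{p}}\|\f\|_p,\qquad \f\in L^1\cap L^2,
\end{equation*}
and density gives the extension to $L^p$.

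\emph{Step 2 (magnetic Riesz transform).} By Remark \ref{rem:Lp:square:root:scalar}(ii), $\nabla_\a\bm L^{-\frac12}$ acts componentwise. It therefore suffices to know that the scalar operator $\nabla_\a L^{-\frac12}$ is bounded on $L^p(\R^d;\C)$ for $p\in(1,2]$, with a constant $c'$ independent of $\a$ (and possibly of $d$). This is the magnetic case $V=0$ of the known results mentioned in the introduction, e.g.\ \cite{sikora, DOY06}: they obtain weak type $(1,1)$ and $L^p$ boundedness for $p\in(1,2]$ using Gaussian bounds of the heat kernel, which hold here by \eqref{stima_nuclei_gauss} with $V=0$. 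I would invoke that result. Alternatively, I would adapt the argument of \cite{dziubanski} using \eqref{weak-diamagnetic}.

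\emph{Step 3 (conclusion).} For $\f\in L^2\cap L^p$, the factorization \eqref{eq:factorization} gives
\begin{equation*}
\|\nabla_\a\bm A^{-\frac12}\f\|_p\le c'\|\bm L^{\frac12}\bm A^{-\frac12}\f\|_p\le c'(1+\kappa)^{\frac{2-p}{p}}\|\f\|_p.
\end{equation*}
This requires $\nabla_\a\bm L^{-\frac12}$ on $L^2$ and its $L^p$-extension to agree on $L^2\cap L^p$, which holds by construction. Density of $L^2\cap L^p$ in $L^p$ then concludes the proof.

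The main obstacle is Step 2, namely justifying the $L^p$-boundedness of $\nabla_\a L^{-\frac12}$ under the weak regularity of $\a$, since any cited result must not require smooth $\a$. The first thing I would try is the Gaussian-bound based weak $(1,1)$ argument of \cite{sikora}, which only uses the form definition and Davies--Gaffney estimates. These estimates are valid for any $\a\in L^2_{\rm loc}$.
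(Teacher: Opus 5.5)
Your proposal is correct and follows the paper's proof essentially step for step. Like the paper, you apply Riesz--Thorin between the $L^1$ bound $1+\kappa$ of Proposition \ref{prop:bdd:1-1} and the $L^2$ contraction of Proposition \ref{prop:conv_nablaH-12}, and then combine the factorization \eqref{eq:factorization} with the scalar $L^p$-boundedness of $\nabla_\a L^{-\frac12}$ taken from \cite{sikora, DOY06}.

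The one step you pass over is going from the scalar operator to its componentwise action on $\C^m$-valued functions. Done naively, this costs a factor such as $m^{\frac1p-\frac12}$, which is harmless for the statement as written. The paper instead uses the $\ell^2$-valued extension \cite[Proposition 3.1]{IM96} to obtain $c'=\|\nabla_\a L^{-\frac12}\|_{\mathcal B(L^p(\R^d;\C))}$, independent of $m$.
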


\begin{proof}
Since, from Propositions \ref{prop:conv_nablaH-12} and \ref{prop:bdd:1-1}, $\bm{L}^{\frac{1}{2}}\bm{A}^{-\frac{1}{2}}\in {\mathcal B}(L^1(\R^d;\C^d))\cap {\mathcal B}(L^2(\R^d;\C^m))$, the Riesz-Thorin interpolation Theorem (see, e.g., \cite[Theorem 4.5.2]{grafakos-classic}) implies that $\bm L^{\frac12}{\bm{A}}^{-\frac12}$ is bounded on $L^p(\R^d;\C^m)$ for every $p\in (1,2)$ and 
\begin{equation}\label{eq:stima:Lp-Lp}
\|\bm L^{\frac12}{\bm{A}}^{-\frac12}\|_{{\mathcal B}(L^p(\R^d;\C^m))} \le (1+\kappa)^{\frac{2-p}{p}}.
\end{equation}
As already observed in Remark \ref{rem:Lp:square:root:scalar}(ii), the operator $\nabla_\a \bm{L}^{-\frac12}$ acts componentwise. Moreover, $\nabla_\a L^{-\frac12}$ is a bounded operator on $L^p(\R^d;\C)$ for every $p\in(1,2]$ (see,  \cite{sikora, DOY06, BenAli1}). Hence, 
taking \cite[Proposition 3.1]{IM96} into account, we can estimate 
\begin{align}
\|\nabla_\a \bm{L}^{-\frac12}\uu\|_p =& \bigg\|\bigg (\sum_{k=1}^m|\nabla_\a L^{-\frac12}u_j|^2\bigg )^{\frac12}\bigg \|_p
\le \|\nabla_\a L^{-\frac12}\|_{{\mathcal B}(L^p(\R^d;\C))}\|\uu\|_p\eqqcolon c'\|\uu\|_p
\label{eq:stima:p-p:riesz:lap}
\end{align}
for every $\uu\in L^p(\R^d;\C^m)$.

Let $p$ be fixed in $(1,2]$ and let $\f$ be in $C^\infty_c(\R^d;\C^m)$. From the factorization \eqref{eq:factorization}, estimates \eqref{eq:stima:Lp-Lp} and \eqref{eq:stima:p-p:riesz:lap}, we conclude that
\begin{align*}
\|\nabla_\a{\bm{A}}^{-\frac12}\f\|_p= \|(\nabla_\a \bm{L}^{-\frac12})\bm L^{\frac12}{\bm{A}}^{-\frac12}\f\|_p
\le c'\|\bm L^{\frac12}{\bm{A}}^{-\frac12}\f\|_p\le c'(1+\kappa)^{\frac{2-p}{p}}\|\f\|_p.
\end{align*}

The density of $C^\infty_c(\R^d;\C^m)$ in $L^p(\R^d;\C^m)$ allows us to complete the proof.
\end{proof}

\begin{rem}
\begin{enumerate}[\rm (i)]
\item 
Suppose $\kappa$ is independent of $m$. Then the Riesz-transform estimate for $A$ is independent of the number of components $m$: i.e., the operator norm bound is uniform in $m$ for every potential in the class determined by Hypothesis \ref{hyp:avl_potenziale} with the fixed constant $\kappa$.
\item 
This estimate is also independent of the potential, in the sense that it holds for every matrix potential $V$ satisfying Hypothesis \ref{hyp:avl_potenziale} with the same constant $\kappa$.
\item 
Notice that all the results obtained so far for ${\bm A}$ remain valid when $\bm a = \bm 0$. In particular, for $p\in(1,2]$, the estimate
\begin{equation*}
\|\nabla {(-\bm \Delta)}^{-\frac12}\bm f\|_p \le \frac{c}{p-1}\|\bm \f\|_p
\end{equation*}
is sharp for every $\bm f \in L^p(\R^d;\C^m)$, where  $c$ is a positive constant independent of the dimension, see \cite[Theorem 1.1]{IM96} or \cite[Theorem 3]{BW95}. It follows that the Riesz transform of the purely electric Schr\"odinger operator satisfies the estimate
\begin{equation*}
\|\nabla(-\bm \Delta+V)^{-\frac12} \bm f\|_p \le \frac{c(1+\kappa)^{\frac{2-p}{p}}}{p-1}\|\bm f\|_p
\end{equation*}
for every $\bm f \in L^p(\R^d;\C^m)$ and $p\in(1,2]$. Here, $\kappa$ denotes the constant appearing in Hypothesis \ref{hyp:avl_potenziale}. Hence, if $\kappa$ does not depend on $d$, then as in \cite{dziubanski} we recover a dimension-free estimate for the Riesz transform, which is also potential-free in the sense that it holds for every matrix potential $V$ which satisfies Hypothesis \ref{hyp:avl_potenziale} with the same constant $\kappa$.
\end{enumerate}
\end{rem}

\section{\texorpdfstring{$V$}{V} zero-order Riesz transform}
\label{sec:0:RT}
In this section, assuming Hypotheses \ref{hyp-operatore} and \ref{hyp:avl_potenziale},we define the Riesz transform $V^{\alpha}\bm{A}^{-\alpha}$ and prove that it is bounded on $L^p(\mathbb{R}^d;\mathbb{C}^m)$ for every $p\in(1,2]$ and every $\alpha\in[0,1/p]$. 
The form method shows that the operator $V^{\frac{1}{2}}\bm{A}^{-\frac{1}{2}}$ is well defined on $D(\bm{A}^{-\frac{1}{2}})$. 

For other values of $\alpha$ and $p$, the definition of the operator $V^{\alpha}\bm{A}^{-\alpha}$ is not straightforward, due to the fact that the entries of the potential $V$ are only locally integrable functions. We therefore define the operator $V^{\alpha}\bm{A}^{-\alpha}$ by means of an approximation procedure.

For every $n\in\N$, we consider the 
sequence $(V_n)_{n\in\N}$ of bounded potential introduced at the beginning of Section \ref{sec:max_ineq_Lp}, which is defined by
\begin{align*}
V_n(x)\coloneqq{\mathds 1}_{F_n}(x) V(x)+(1-{\mathds 1}_{F_n}(x))(\lambda_V(x)\wedge n)\operatorname{Id}_{\R^m},\qquad\;\,x\in\R^d,    
\end{align*}
where $F_n=\{x\in\R^d: n>\kappa\lambda_V(x)\}$. Moreover, we set $\bm{R}^{0,n}\coloneqq\pmb{\operatorname{Id}}$ and
\begin{eqnarray*} 
\bm{R}^{\alpha,n}\bm f= V_n^{\alpha}
(n^{-1}+\bm{A})^{-\alpha}\bm f
\end{eqnarray*}
for every simple function $\bm f$ with bounded support, if $\alpha>0$. 

\begin{rem}
\label{rem:V:eps:bip}
For every $n\in\N$ and almost every $x\in \R^d$, the matrix $V_n(x)$, seen as an operator from $\C^m$ into itself, admits bounded imaginary powers $(V_n(x))^{is}$ and $\|(V_n(x))^{is}\|_{{\mathcal B}(\C^m)}\le1$ for every $s\in\R$. This is a consequence of \cite[Corollary 7.1.6(a)]{haase2006}, which shows that the imaginary powers of positive self-adjoint operators in a Hilbert space have unitary norm. Further, from the Komatzu representation formula, it follows that $V_n^{is}(x)=(V_n(x))^{is}$ for almost every $x\in\R^d$ and every $s\in\R$, where $V_n^{is}$ is the imaginary power of $V_n$, seen as a bounded linear operator from $L^p(\R^d;\C^m)$ ($p\in[1,\infty)$) into itself. 
\end{rem}

\begin{prop}
\label{prop:bdd:R:a,e:1/p}
For every $n\in\N$ and every $p\in [1,2]$ the operator $\bm{R}^{\frac1p,n}$ is bounded on $L^p(\R^d;\C^m)$ by a positive constant, depending on $p$ but  independent of $n$.
\end{prop}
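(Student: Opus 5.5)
Following the scalar argument of \cite{KW25}, the plan is to apply the vector-valued Stein complex interpolation theorem of the Appendix to the analytic family
\begin{equation*}
\bm T_z \coloneqq V_n^{z}(n^{-1}+\bm A)^{-z},
\end{equation*}
defined on simple functions with bounded support. The parameter $z$ ranges over the strip $0\le \re z\le 1$, and the two endpoints are taken in different spaces: $L^2(\R^d;\C^m)$ on the line $\re z=0$ and $L^1(\R^d;\C^m)$ on the line $\re z=1$. Since $\frac1p=\frac{1-\theta}{2}+\frac{\theta}{1}$ for $\theta=\frac2p-1\in[0,1]$, the point $z=\theta$ would a priori only give the exponent $\theta$, not $\frac1p$. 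I will therefore rescale and use the family $\bm T_{z/p\,\cdot\,p}$ with the natural parametrization. Concretely, the endpoints I want are exponent $\frac12$ at $L^2$ and exponent $1$ at $L^1$, so I set $\bm S_z=V_n^{(1+z)/2}(n^{-1}+\bm A)^{-(1+z)/2}$. At $z=\theta$ its exponent is $\frac{1+\theta}{2}=\frac1p$, which is the right one.

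\emph{Endpoint $\re z=0$, i.e.\ exponent $\frac12+is$.} I write $\bm S_{is}=V_n^{is/2}\,V_n^{1/2}(n^{-1}+\bm A)^{-1/2}\,(n^{-1}+\bm A)^{-is/2}$.
\begin{itemize}
\item By Remark~\ref{rem:V:eps:bip}, $\|V_n^{is/2}\|\le1$ pointwise.
\item By \eqref{eq:Vn:ge:forma}, $\langle V_n\bm u,\bm u\rangle\le\kappa\langle V\bm u,\bm u\rangle$, since $\lambda_V\wedge n\le\lambda_V$ and $V\ge\lambda_V$. Hence $\|V_n^{1/2}\bm u\|_2^2\le\kappa\,\mathfrak a(\bm u,\bm u)\le\kappa\|(n^{-1}+\bm A)^{1/2}\bm u\|_2^2$, so $\|V_n^{1/2}(n^{-1}+\bm A)^{-1/2}\|_{\mathcal B(L^2)}\le\sqrt\kappa$.
\item $(n^{-1}+\bm A)^{-is/2}$ is unitary on $L^2$, because $\bm A$ is self-adjoint.
\end{itemize}
This gives the bound $\sqrt\kappa$, uniform in $n$ and $s$.

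\emph{Endpoint $\re z=1$, i.e.\ exponent $1+is$.} I write $\bm S_{1+is}=V_n^{is/2}\,V_n(n^{-1}+\bm A)^{-1}\,(n^{-1}+\bm A)^{-is/2}$.
\begin{itemize}
\item The middle factor is bounded on $L^1$ by $\kappa$, uniformly in $n$. This follows from \eqref{stima_vett_scal_Lp_nuovo}--\eqref{eq:max:in:l+e_nuovo} with $p=1$ and $c_1'=1$, with $\varepsilon=n^{-1}$ (the proof of the maximal inequality in Section~\ref{sec:max_ineq_Lp} gives exactly $\|V_n(\varepsilon+\bm A_1)^{-1}\|\le\kappa$).
\item The imaginary power $(n^{-1}+\bm A_1)^{-is/2}$ is the delicate factor, since it is not bounded on $L^1$.
\end{itemize}
This is the main obstacle.

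To get around it, I will move the imaginary power to the interior of the strip. I interpolate between $L^2$ at $\re z=0$ and $L^{p_0}$ at $\re z=1$ for some $p_0\in(1,p)$. On $L^{p_0}$, Corollary~\ref{coro:Hinfinity} gives $\|(n^{-1}+\bm A_{p_0})^{-is}\|\le C_{p_0}e^{\omega|s|}$, with $\omega$ arbitrarily small and uniform in $n$ (shifting by $n^{-1}$ preserves the $\textup{H}^\infty$ bound). This admissible exponential growth is allowed by Stein's theorem. The endpoint estimate I then need is $\|V_n^{1/p_0}(n^{-1}+\bm A)^{-1/p_0}\|_{\mathcal B(L^{p_0})}$, which is the statement itself at $p_0$, so the argument would be circular.

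Instead I will run the interpolation with the endpoints ``exponent $1$ at $L^1$'' and ``exponent $0$ at $L^\infty$'' (where $\bm T_{is}$ is bounded on $L^\infty$ by the $\textup{H}^\infty$ calculus) and then dualize. If this scheme fails, I will fall back on the route of \cite{KW25}. That route is a Stein interpolation on the family $z\mapsto V_n^{z}(n^{-1}+\bm A)^{-z}$ with the real endpoint $L^1$ estimate used directly, and with the imaginary powers of $\bm A$ placed on the test-function side. Writing $\langle \bm T_z\bm f,\bm g\rangle=\langle (n^{-1}+\bm A)^{-z}\bm f, V_n^{\bar z}\bm g\rangle$, the imaginary powers then act in $L^{p}$ for $p>1$ through Corollary~\ref{coro:Hinfinity}, while the $L^1$ estimate at real exponent $1$ supplies the endpoint bound. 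Analyticity and the admissible-growth condition for simple $\bm f,\bm g$ follow from the boundedness of $V_n$ and the integral representation \eqref{eq:rep:e+H:-1/2}.
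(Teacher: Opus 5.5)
Your $L^2$ endpoint is correct, and your unitarity argument there is simpler than the paper's. There is, however, a genuine gap at the $L^1$ endpoint, and your factorization creates it. Writing $\bm S_{1+is}=V_n^{is/2}\,V_n(n^{-1}+\bm A)^{-1}\,(n^{-1}+\bm A)^{-is/2}$ isolates an imaginary power that is indeed not bounded on $L^1$. The complex power $(n^{-1}+\bm A)^{-(1+i\sigma)}$ taken as a whole causes no trouble.

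The missing idea is to use the Gamma-function representation \eqref{eq:rep:e+H:-1/2} directly with the complex exponent,
\[
(n^{-1}+\bm A)^{-(1+i\sigma)}\bm f=\frac{1}{\Gamma(1+i\sigma)}\int_0^\infty t^{i\sigma}e^{-\frac tn}e^{-t\bm A}\bm f\,dt,
\]
in which $|t^{i\sigma}|=1$. Combine this with the diamagnetic inequality \eqref{eq:diamagnetic:inequality}, $\|V_n(x)^{i\sigma}\|\le 1$ and $\|V_n(x)\|\le\kappa(\lambda_V(x)\wedge n)$. This gives the pointwise domination
\[
\big\|\big(V_n^{1+i\sigma}(n^{-1}+\bm A)^{-(1+i\sigma)}\bm f\big)(x)\big\|\le\frac{\kappa}{|\Gamma(1+i\sigma)|}\,(\lambda_V(x)\wedge n)\big((n^{-1}+H)^{-1}\|\bm f\|\big)(x).
\]
The scalar $L^1$ maximal inequality of Kato, $\|(n^{-1}+\lambda_V)(n^{-1}+H)^{-1}g\|_1\le\|g\|_1$ for $0\le g\in L^1(\R^d)$, then yields $\|\bm T_{1+i\sigma}\bm f\|_1\le\kappa|\Gamma(1+i\sigma)|^{-1}\|\bm f\|_1$, uniformly in $n$. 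Since $|\Gamma(1+i\sigma)|^{-1}=(\sinh(\pi|\sigma|)/(\pi|\sigma|))^{1/2}$ grows only like $e^{\pi|\sigma|/2}$, this bound is admissible in Theorem \ref{theo:vv:stein}.

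This is exactly how the paper handles the line $\re z=1$ of the strip $\frac12\le\re z\le 1$; it treats $\re z=\frac12$ by the same domination. You never need imaginary powers of $\bm A$ on $L^1$, because the Laplace-transform representation turns the whole complex power into a bound by a positive operator.

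None of your workarounds replaces this step.
\begin{itemize}
\item The $L^\infty$ endpoint at $\re z=0$ is not available. Corollary \ref{coro:Hinfinity} gives the $\textup{H}^\infty$-calculus only for $p\in(1,\infty)$. Imaginary powers of $n^{-1}+\bm A$, like those of $-\Delta$, are in general unbounded on $L^\infty$ and $L^1$. For real part $0$ the Gamma integral does not even converge absolutely at $t=0$, so no domination argument helps there.
\item That scheme also still needs the same $L^1$ bound at $\re z=1$. Dualizing would concern the adjoint $(n^{-1}+\bm A)^{-\bar z}V_n^{\bar z}$, which is a different operator.
\item Moving $V_n^{\bar z}$ onto the test function only rewrites the same pairing. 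Stein's theorem still requires $|\langle\bm T_{1+is}\bm f,\bm g\rangle|\le b_1(s)\|\bm f\|_1\|\bm g\|_\infty$, and the imaginary power still acts on $L^1$ data. Moreover, $\|V_n^{1-is}\bm g\|_\infty$ is only bounded by $\kappa n\|\bm g\|_\infty$, which is not uniform in $n$.
\item The $L^{p_0}$ route is circular, as you note yourself.
\end{itemize}
As written, the proposal does not establish the $L^1$ endpoint, and so it does not prove the proposition.
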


\begin{proof}
For every $z\in\Pi_{1/2}^1=\left\{z\in\C:\frac12<{\rm Re}(z)<1\right\}$ and for every $n\in\N$, the function $\lambda\mapsto (n^{-1}+\lambda)^{-z}$ is bounded on $[0,\infty)$ and holomorphic in a sector  containing $[0,\infty)$. Moreover, since the operator $\bm{A}$ admits bounded $\textup{H}^\infty$-calculus on $L^2(\R^d;\C^m)$, the operator $(n^{-1}+\bm{A})^{-z}$ is well-defined and bounded on $L^2(\R^d;\C^m)$. 

Let $\{\bm{T}_z: z\in\overline \Pi_{1/2}^1\}$ be the analytic family defined by
\begin{align}
\label{eq:def:T:z}
\bm{T}_z\bm f= V_n^z
(n^{-1}+\bm{A})^{-z}\bm f, \qquad\;\, \bm f\in L^2(\R^d;\C^m).    
\end{align}

We aim to apply the Stein interpolation Theorem \ref{theo:vv:stein}. For this purpose, we begin by noticing that
the matrix $V_n(x)$ is symmetric and positive definite for every $x\in\R^d$ and it satisfies 
the estimate \eqref{eq:Vn:ge:forma}. It follows that $\|(V_n(x))^{\alpha}\|_{{\mathcal B}(\C^m)}\le \kappa^{\alpha} n^{\alpha}$ for every $\alpha>0$.  Taking also Remark \ref{rem:V:eps:bip} into account, we can estimate
\begin{align*}
|\langle \bm{T}_z\bm f,\bm g\rangle_{L^2(\R^d;\C^m)}| 
&\le \|V_n^{i\operatorname{Im}(z)}\|_{{\mathcal B}(L^2(\R^d;\C^m))}\|V_n^{\operatorname{Re}(z)}\|_{{\mathcal B}(L^2(\R^d;\C^m))}\|(n^{-1}+\bm{A})^{-z}\f\|_2\|\bm g\|_2\\
&\le (\kappa n)^{\operatorname{Re}(z)} 
\|(n^{-1}+\bm{A})^{-i\operatorname{Im}(z)}\|_{ {\mathcal B}(L^2(\R^d;\C^m))}\|(n^{-1}+\bm{A})^{-\operatorname{Re}(z)}\f\|_2\|\bm g\|_2\\
&\le (\kappa n)^{\operatorname{Re}(z)}\frac1{|\Gamma(z)|}\bigg (\int_0^\infty t^{{\rm Re} (z)-1}e^{-\frac{t}{n}}\|e^{-tA}\f\|_2dt\bigg )\|\bm g\|_2\\
&\le \kappa n^{2\operatorname{Re}(z)}\frac{\Gamma(\operatorname{Re}(z))}{|\Gamma(z)|}\|\f\|_2\|\bm g\|_2\\
&\le c\kappa n(\operatorname{cosh}(\pi\operatorname{Im}(z)))^{\frac12}\|\f\|_2\|\bm g\|_2\\
& \le c\kappa n\|\f\|_2\|\bm g\|_2,
\end{align*}
for some positive constant $c$, independent of $\f$ and $\bm{g}$.
This implies that $\{\bm{T}_z: z\in\overline\Pi_{1/2}^1\}$ is an analytic family of operators with admissible growth. 

It remains to  bound $\bm{T}_z$ for ${\rm Re}(z)=\frac12$ and ${\rm Re}(z)=1$ on $L^{2}(\R^d;\C^m)$ and $L^{1}(\R^d;\C^m)$, respectively. For this purpose, we fix a simple function $\f$ with bounded support and notice that from the domination \eqref{eq:diamagnetic:inequality} and Remark \ref{rem:V:eps:bip} it follows that
\begin{align*}
\|(\bm{T}_{1+is}\bm f)(x)\|
& \leq  \frac{\|(V_n(x))^{is}\|_{{\mathcal B}(\C^m)}}{|\Gamma(1+is)|}
\bigg\|V_n(x)\bigg (\int_0^\infty t^{is}e^{-\frac{t}{n}}e^{-t\bm{A}}\bm fdt\bigg )(x)\bigg \| \\
& \leq  \frac{\kappa\sqrt{\sinh(\pi |s|)}}{\sqrt{\pi |s|}}(\lambda_V(x)\wedge n)\bigg (\int_0^\infty e^{-\frac{t}{n}}e^{-tH}\|\bm f\|dt\bigg )(x) \\
& =   \frac{\kappa\sqrt{\sinh(\pi |s|)}}{\sqrt{\pi |s|}}(\lambda_V(x)\wedge n)((n^{-1}+H)^{-1}\|\bm f\|)(x)
\end{align*}
for almost every $x\in\R^d$, 
where we also used the formula $|\Gamma(1+is)|=\sqrt{\pi |s|}(\sinh(\pi |s|))^{-\frac12}$ for every $s\in\R$.
Integrating over $\R^d$ and taking the maximal inequality in $L^1(\R^d)$ for the operator $-(n^{-1}+H)$ into account, we get $\|\bm{T}_{1+is}\bm f\|_1\le b_1(s) \|\bm f\|_1$, where $b_1(s)= \kappa((\pi |s|)^{-1}\sinh(\pi |s|))^{\frac12}$ for every $s\in\R$. 

Analogously, recalling that $|\Gamma\left(\frac12+is\right)|=\sqrt\pi(\cosh(\pi s))^{-\frac12}$ we deduce that
\begin{align*}
\|(\bm{T}_{\frac12+is}\bm f)(x)\|
&\le  b_{\frac12}(s)
(\lambda_V(x)\wedge n)^\frac12((n^{-1}+H)^{-\frac12}\|\bm f\|)(x)
\end{align*}
for almost every $x\in\R^d$, where  $b_{\frac12}(s)= (\pi^{-1}\kappa\cosh{(\pi s)})^{\frac12}$ for every $s\in\R$. 
Hence, taking into account that $(\lambda_V\wedge n)^{\frac{1}{2}}(n^{-1}+H)^{-\frac{1}{2}}$ is a contraction in $L^2(\R^d;\C)$ (a property that follows from the definition of $\mathfrak{h}$ together with $\|\lambda_V^{1/2}f\|_2 \le \|(n^{-1}+H)^{\frac12}f\|_2$ for every $f\in D(\mathfrak{h})$), it can be shown that $\|\bm{T}_{\frac12+is}\bm f\|_2\le b_{\frac12}(s)\|\bm f\|_2$. 
Since the functions $b_1$ and $b_{\frac12}$ satisfy the growth condition in the statement  of Theorem \ref{theo:vv:stein} over the interval $\left(\frac12,1\right)$, by applying that theorem with $a_0=\frac12$, $a_1=1$, $q_0=p_0=2$ and $q_1=p_1=1$, we get that for every $p\in [1,2]$ there exists a positive constant $M$, depending on $p$ but  independent of $n$, such that
\begin{align*}
\|\bm{R}^{\frac{1}{p},n}\bm f\|_p\leq M\|\bm f\|_p
\end{align*}
for every simple function $\bm f$ with bounded support. The conclusion follows by density.
\end{proof}

It is possible to prove the following stronger interpolation result for the operators $\bm{R}^{\alpha,n}$.

\begin{prop}\label{prop:KW:gen}
Let $n\in\N$, $0\le \alpha_0< \alpha_1$ and $p_0,p_1\in(1,\infty)$. Assume that the operators ${\bm R}^{\alpha_j,n}$ are bounded on $L^{p_j}(\R^d;\C^m)$ for $j\in\{0,1\}$. Then, the operator ${\bm R}^{\alpha,n}$ is bounded on $L^{p}(\R^d;\C^m)$ with $p$ and $\alpha$ satisfying
\begin{equation*}
\frac1p = \frac{\theta}{p_1}+\frac{1-\theta}{p_0},\qquad\;\, \alpha= \alpha_1\theta +  \alpha_0(1-\theta),\qquad\theta\in(0,1).
\end{equation*}
In particular, if the norm of ${\bm R}^{\alpha_j,n}$ on $L^{p_j}(\R^d;\C^m)$, $j\in\{0,1\}$, is bounded by a constant which is independent of $n$, then so does for the norm of ${\bm R}^{\alpha,n}$ as bounded linear operator on $L^p(\R^d;\C^m)$.
\end{prop}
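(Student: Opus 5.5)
We will derive this from the vector-valued Stein interpolation theorem (Theorem \ref{theo:vv:stein} in the Appendix), used exactly as in the proof of Proposition \ref{prop:bdd:R:a,e:1/p}, but now on the strip $\overline\Pi_{\alpha_0}^{\alpha_1}=\{z\in\C:\alpha_0\le{\rm Re}(z)\le\alpha_1\}$.

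For $z$ in the strip we set
\begin{equation*}
\bm T_z\bm f = V_n^{z}(n^{-1}+\bm A)^{-z}\bm f,
\end{equation*}
with the convention $V_n^0=\pmb{\operatorname{Id}}$, so that $\bm T_0=\bm R^{0,n}$ when $\alpha_0=0$. We first check that this is an analytic family of admissible growth. As in Proposition \ref{prop:bdd:R:a,e:1/p}, estimate \eqref{eq:Vn:ge:forma} gives $\|V_n^{{\rm Re}(z)}\|\le(\kappa n)^{{\rm Re}(z)}$. Remark \ref{rem:V:eps:bip} shows that $V_n^{i{\rm Im}(z)}$ is a contraction. The bounded $\textup{H}^\infty$-calculus of $\bm A$ on $L^2$ (Corollary \ref{coro:Hinfinity}), together with the bound $\|(n^{-1}+\bm A)^{-{\rm Re}(z)}\|\le n^{{\rm Re}(z)}$, controls the second factor. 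Combining these, $|\langle\bm T_z\bm f,\bm g\rangle|$ grows at most like a power of $n$ times $\Gamma({\rm Re}(z))/|\Gamma(z)|$. By Stirling this is $O(e^{c|{\rm Im} z|})$, which is admissible. Analyticity in $z$ follows from the holomorphy of $z\mapsto\lambda^{-z}$ and of $z\mapsto V_n(x)^z$, tested against simple functions with bounded support.

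The key step is the boundary estimates. On the line ${\rm Re}(z)=\alpha_j$ we factor
\begin{equation*}
\bm T_{\alpha_j+is}=V_n^{is}\,\bm R^{\alpha_j,n}\,(n^{-1}+\bm A)^{-is},
\end{equation*}
using that $V_n^{\alpha_j}$ and $V_n^{is}$ commute pointwise, and that $(n^{-1}+\bm A)^{-\alpha_j-is}=(n^{-1}+\bm A)^{-\alpha_j}(n^{-1}+\bm A)^{-is}$ by the functional calculus. The three factors are bounded on $L^{p_j}$ as follows:
\begin{itemize}
\item $V_n^{is}$ is a pointwise contraction by Remark \ref{rem:V:eps:bip}, hence bounded by $1$ on every $L^{p_j}$.
\item $\bm R^{\alpha_j,n}$ is bounded on $L^{p_j}$ by assumption.
\item $(n^{-1}+\bm A)^{-is}$ is bounded on $L^{p_j}$ because $p_j\in(1,\infty)$: Corollary \ref{coro:Hinfinity}, applied to $\bm A_{p_j}+n^{-1}$ (which inherits the bounded $\textup{H}^\infty$-calculus by shifting), gives $\|(n^{-1}+\bm A_{p_j})^{-is}\|\le C_{p_j}e^{\theta|s|}$ with $\theta<\pi$ and $C_{p_j}$ independent of $n$.
\end{itemize}
This is precisely where the hypothesis $p_0,p_1>1$ enters. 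We therefore obtain $\|\bm T_{\alpha_j+is}\bm f\|_{p_j}\le b_j(s)\|\bm f\|_{p_j}$ with $b_j(s)=C_{p_j}\|\bm R^{\alpha_j,n}\|e^{\theta|s|}$. These functions meet the growth condition of Theorem \ref{theo:vv:stein} (growth $e^{c|s|}$ with $c<\pi/(\alpha_1-\alpha_0)$ after rescaling the strip).

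Applying Theorem \ref{theo:vv:stein} with $a_0=\alpha_0$, $a_1=\alpha_1$, $p_j=q_j$, and evaluating at $z=\alpha=\alpha_1\theta+\alpha_0(1-\theta)$, gives boundedness of $\bm R^{\alpha,n}=\bm T_\alpha$ on $L^p$ for simple functions with bounded support. Density then completes the argument.

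The main obstacle is ensuring that the $n$-independence claimed in the final statement survives. The constant coming out of Stein's theorem depends only on $b_0$ and $b_1$. These depend on $\|\bm R^{\alpha_j,n}\|$, which is uniform in $n$ by assumption, and on the $\textup{H}^\infty$ (imaginary-power) constants of $n^{-1}+\bm A_{p_j}$. For the latter we will argue that adding a nonnegative constant does not worsen the $\textup{H}^\infty$-calculus bound on sectors. Indeed, $f(\cdot+n^{-1})$ is again in $\textup{H}^\infty$ of the sector with no larger sup-norm, so the constants are uniform in $n$.

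A second delicate point is the admissible-growth bound when $\alpha_0=0$, where $\Gamma({\rm Re}(z))$ blows up at ${\rm Re}(z)=0$. We handle this by bounding $(n^{-1}+\bm A)^{-z}$ directly through the $L^2$ functional calculus, $\sup_{\lambda\ge0}|(n^{-1}+\lambda)^{-z}|\le n^{{\rm Re}(z)}e^{\pi|{\rm Im} z|/2}$, rather than through the Gamma integral.
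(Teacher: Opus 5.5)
Your proof follows the paper's argument: the same analytic family $\bm T_z=V_n^{z}(n^{-1}+\bm A)^{-z}$ on $\overline\Pi_{\alpha_0}^{\alpha_1}$, the same boundary factorization $\bm T_{\alpha_j+is}=V_n^{is}\bm R^{\alpha_j,n}(n^{-1}+\bm A)^{-is}$ with the imaginary powers controlled on $L^{p_j}$ by the bounded $\textup{H}^\infty$-calculus of Corollary \ref{coro:Hinfinity}, and then Theorem \ref{theo:vv:stein}. Your two additions fill in points the paper covers only by ``arguing as before'': the shift argument makes the bounds on $(n^{-1}+\bm A_{p_j})^{-is}$ independent of $n$, and the direct spectral bound gives admissible growth near ${\rm Re}(z)=0$ when $\alpha_0=0$.
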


\begin{proof}
Throughout the proof, by $c$ we denote a positive constant, which depends at most on $m$, $p_0$ and $p_1$ but does not depend on $n$ and may vary from line to line.
    
We consider the analytic family $\{\bm{T}_z: z\in\overline{\Pi}_{\alpha_0}^{\alpha_1}\}$, where the operator $\bm{T}_z$ has been introduced in \eqref{eq:def:T:z}. Arguing as before, this is an analytic for every simple function with admissible growth and, for every $\alpha\in [\alpha_0,\alpha_1]$, every  $n\in\N$ and every $\f\in L^2(\R^d;\C^m)$, we have $\bm{T}_\alpha\bm f=\bm{R}^{\alpha,n}\bm f$.
To apply again Theorem \ref{theo:vv:stein}, we have to bound $\bm{T}_z$ for ${\rm Re}(z)= \alpha_0$ and ${\rm Re}(z)= \alpha_1$ on $L^{p_0}(\R^d;\C^m)$ and on $L^{p_1}(\R^d;\C^m)$, respectively.

Notice that, writing $z=\alpha_j+is$, for $s\in\R$ and $j\in\{0,1\}$, it follows that
\begin{align*}
\bm{T}_z = V_n^{is}\bm{T}_{\alpha_j}(n^{-1}+\bm{A})^{-is}.
\end{align*}
This implies that $\|\bm{T}_{ \alpha_0+is}\bm f\|_{p_0}\le \|\bm{T}_{ \alpha_0}\|_{{\mathcal B}(L^{p_0}(\R^d;\C^m))}\|(n^{-1}+\bm{A})^{-is}\bm f\|_{p_0}$ for every simple function $\f$. By Corollary \ref{coro:Hinfinity}, the operator $\bm{A}$ admits bounded $\textup{H}^\infty$-calculus in $L^q(\R^d;\C^m)$ for every $q\in(1,\infty)$. Hence, $\|(n^{-1}+\bm{A})^{-is}\|_{{\mathcal B}(L^{p_0}(\R^d;\C^m))}\leq c  \|\bm{T}_{ \alpha_0}\|_{{\mathcal B}(L^{p_0}(\R^d;\C^m))}e^{\frac{|s|\pi}2}$. Further, since $\bm{T}_{\alpha_0}=\bm{R}^{\alpha_0,n}$ is bounded on $L^{p_0}(\R^d;\C^m)$ from assumption, we infer that for every $s\in\R$, 
$\|\bm{T}_{\alpha_0+is}\bm f\|_{p_0}\le b_{\alpha_0}(s) \|\bm f\|_{p_0}$ for $b_{\alpha_0}(s) \coloneqq c  \|\bm{R}^{\alpha_0,n}\|_{{\mathcal B}(L^{p_0}(\R^d;\C^m))}e^{\frac{|s|\pi}{2}}$. 
    
Analogously, using the fact that $\bm{T}_{\alpha_1}={\bm R}^{\alpha_1,n}$ is bounded on $L^{p_1}(\R^d;\C^m)$ from assumption, it can be proved that for every $s\in\R$, $\|\bm{T}_{\alpha_1+is}\bm f\|_{p_1}\le b_{\alpha_1}(s) \|\bm f\|_{p_1}$ where $b_{\alpha_1}(s) \coloneqq c  \|\bm{R}^{ \alpha_1,n}\|_{{\mathcal B}(L^{p_1}(\R^d;\C^m))}e^{\frac{|s|\pi}2}$. 
It is clear that the functions $b_{\alpha_0}$ and $b_{\alpha_1}$ satisfy the growth condition \eqref{eq:gb}. Hence the thesis follows from Theorem \ref{theo:vv:stein}. 
\end{proof}

\begin{theo}
Fix $p\in (1,2]$. For every $\f\in C^{\infty}_c(\R^d;\C^m)$, the sequence $(\bm{R}^{\alpha,n}\f)_{n\in\N}$ converges almost everywhere in $\R^d$ for every $\alpha\in [0,1/p]$. 
Moreover, if we denote by 
$V^{\alpha}{\bm A}^{-\alpha}\f$ the previous limit, then $V^{\alpha}{\bm A}^{-\alpha}$ extends to a bounded operator on $L^p(\R^d;\C^m)$.
\end{theo}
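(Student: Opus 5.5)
The argument has two parts. First we get uniform $L^p$ bounds for $\bm R^{\alpha,n}$, $\alpha\in[0,1/p]$, from Propositions \ref{prop:bdd:R:a,e:1/p} and \ref{prop:KW:gen}. Then we prove a.e. convergence of $\bm R^{\alpha,n}\f$ for $\f\in C^\infty_c(\R^d;\C^m)$ and pass to the limit with Fatou's lemma.

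\emph{Uniform bounds.} For $\alpha=0$ the operator is the identity. For $\alpha=1/p$ with $p\in(1,2]$, Proposition \ref{prop:bdd:R:a,e:1/p} gives $\|\bm R^{1/p,n}\|_{\mathcal B(L^p)}\le M_p$, independently of $n$. For $\alpha\in(0,1/p)$ we use Proposition \ref{prop:KW:gen} with $(\alpha_0,p_0)=(0,p_0)$ and $(\alpha_1,p_1)=(1/p_1,p_1)$, where $p_0,p_1\in(1,2]$ are chosen so that
\[
\frac1p=\frac{\theta}{p_1}+\frac{1-\theta}{p_0},\qquad \alpha=\frac{\theta}{p_1}.
\]
For instance take $p_1=p$, $\theta=\alpha p\in(0,1)$ and $p_0=p$. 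Then $\alpha=\theta/p$ and $1/p=\theta/p+(1-\theta)/p$, so the relations hold. We obtain $\sup_n\|\bm R^{\alpha,n}\|_{\mathcal B(L^p)}\le C_{p,\alpha}<\infty$.

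\emph{Pointwise convergence.} Fix $\f\in C^\infty_c$. For $x$ with $\lambda_V(x)<\infty$ we have $x\in F_n$ for $n$ large, so $V_n(x)=V(x)$ eventually and $V_n^\alpha(x)\to V^\alpha(x)$. It then remains to show that $(n^{-1}+\bm A)^{-\alpha}\f$ converges a.e.; this is the main obstacle, since $\bm A^{-\alpha}\f$ need not be in $L^2$ for $\alpha\ge d/4$. Use the kernel formula \eqref{lene}: the scalar kernels $\frac1{\Gamma(\alpha)}\int_0^\infty t^{\alpha-1}e^{-t/n}q_{ij}(t,x,y)\,dt$ converge as $n\to\infty$ by monotone/dominated convergence, dominated by the Riesz potential kernel $c|x-y|^{2\alpha-d}$ thanks to \eqref{stima_exp_nuclei_q}. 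This domination is valid when $2\alpha<d$, and since $\alpha\le 1/p<1$ this covers all $d\ge2$. Because $\f$ is bounded with compact support, $\int|x-y|^{2\alpha-d}\|\f(y)\|dy<\infty$ for every $x$, so dominated convergence gives a.e. convergence to $\bm A^{-\alpha}\f$, defined by the limiting kernel.

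In dimension $d=1$ with $\alpha\ge 1/2$ this fails, and I would argue differently. From \eqref{cons:diam} and the structure $V_n\le\kappa(\lambda_V\wedge n)$, I would try to show the integrand is dominated using $e^{-tH}$ instead of the free heat kernel: $\|(e^{-tH}\|\f\|)(x)\|$ decays faster when $\lambda_V\not\equiv0$, and when $\lambda_V=0$ a.e. we have $V_n^\alpha=0$ on that set. Alternatively, one can obtain a.e. convergence along a subsequence from $L^p$-Cauchy-ness, using the representation $(n^{-1}+\bm A)^{-\alpha}-(k^{-1}+\bm A)^{-\alpha}$ together with the uniform bounds. I expect this case to need the most care.

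\emph{Conclusion.} With a.e. convergence in hand, Fatou's lemma and the uniform bound give
\[
\|V^\alpha\bm A^{-\alpha}\f\|_p\le\liminf_n\|\bm R^{\alpha,n}\f\|_p\le C_{p,\alpha}\|\f\|_p
\]
for all $\f\in C^\infty_c(\R^d;\C^m)$. Linearity of the limit and density of $C^\infty_c$ in $L^p$ then yield the unique bounded extension.
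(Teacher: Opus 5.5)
Your uniform bounds (Proposition \ref{prop:bdd:R:a,e:1/p} at $\alpha=1/p$, then Proposition \ref{prop:KW:gen} with $\alpha_0=0$, $\alpha_1=1/p$, $p_0=p_1=p$, $\theta=\alpha p$) and your concluding Fatou-plus-density step coincide with the paper's Step 2. For the pointwise convergence you take a different route. You split off $V_n(x)^\alpha$, which equals $V(x)^\alpha$ once $n>\kappa\lambda_V(x)$. You then dominate $(n^{-1}+\bm A)^{-\alpha}\f(x)$, via \eqref{stima_exp_nuclei_q}, by the Riesz potential $c\int_{\R^d}|x-y|^{2\alpha-d}\|\f(y)\|\,dy$. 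Whenever $2\alpha<d$, hence for all $d\ge 2$, this is correct and more elementary than the paper's argument: it uses nothing about $H$ and works for every $\alpha\in(0,1)$.

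The gap is $d=1$ with $\alpha\in[1/2,1/p]$, a range that is never empty since $1/p\ge 1/2$. There your reduction to a.e.\ convergence of $(n^{-1}+\bm A)^{-\alpha}\f$ is not merely unproved but false in general. Take $m=1$, $\a=\bm 0$, $V\equiv 0$ (admissible, with $V_n\equiv 0$) and a nontrivial $0\le f\in C^\infty_c(\R)$. Then $(e^{t\Delta}f)(x)\ge c_x t^{-1/2}$ for large $t$, so $(n^{-1}-\Delta)^{-\alpha}f(x)\to\infty$ for every $x$ by monotone convergence, whereas $\bm R^{\alpha,n}f\equiv 0$. So the factor $V_n(x)^\alpha$ must be built into the dominating function, as in your first suggested fix. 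The decisive point is then that
\begin{equation*}
t\mapsto \kappa^\alpha t^{\alpha-1}\lambda_V(x)^\alpha (e^{-tH}|f|)(x)\ \text{ belongs to } L^1((0,\infty)) \ \text{ for a.e. } x\in\R^d,
\end{equation*}
which you only assert heuristically (``decays faster''). The paper proves this in every dimension by combining three ingredients: the uniform scalar bound \eqref{stima-KW25} (from \cite{KW25}), monotone convergence of $(n^{-1}+H)^{-\alpha}|f|$ as $n\to\infty$, and Fatou's lemma. The pointwise bound \eqref{stima-conv} then gives dominated convergence in $t$ for $\bm R^{\alpha,n}(f\bm e_j)(x)$. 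A direct proof of your decay claim would need a genuine large-time heat-kernel bound for $-\Delta+\lambda_V$ on $\R$, which you neither prove nor cite. Your second fix fails: the uniform bounds give no $L^p$-Cauchy property of $(\bm R^{\alpha,n}\f)_n$. Moreover, a.e.\ convergence along a subsequence would not yield the convergence of the whole sequence that the theorem asserts.
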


\begin{proof}
We split the proof into two steps. In the first one, we prove the convergence of $\bm{R}^{\alpha,n}\f$, as $n$ tends to $\infty$, for every $\f\in C^{\infty}_c(\R^d;\C^m)$. Then, in Step 2, we conclude the proof.

{\em Step 1}. 
Fix $f\in C^\infty_c(\R^d;\C)$ and $j\in\{1,\ldots,m\}$, $p\in (1,2]$ and $\alpha\in (0,1/p]$. By
\eqref{lene} we have 
\begin{align*}
((n^{-1} + \bm{A})^{-\alpha}(f{\bm e}_j))(x)
= \frac{1}{\Gamma(\alpha)}\int_0^\infty t^{\alpha-1}e^{-\frac{t}{n}}\bm{w}_A(t,x)dt,\qquad\;\,\textit{a.e.}\ x\in\R^d,
\end{align*}
where $\bm{w}_A=(w_{A,1},\ldots,w_{A,m})$ and
\begin{eqnarray*}
w_{A,i}(t,x)=\int_{\R^d}q_{ij}(t,x,y)f(y)dy,\qquad\;\,i\in\{1,\ldots,m\}.
\end{eqnarray*}
Moreover, by Proposition \ref{proposition:measurability} there exists a measurable set $E$ with complement of zero measure such that, for every $t\in E$, there exists a measurable set $F_t\subset\R^d$ with complement of zero measure satisfying
$(e^{-t\bm{A}}(f\bm{e}_j))(x)=\bm{w}_A(t,x)$ for $t\in E$ and $x\in F_t$.

From \eqref{eq:diamagnetic:inequality} we deduce that 
\begin{align}
t^{\alpha-1}e^{-\frac{t}{n}}\|(V_n(x))^{\alpha}(e^{-t\bm{A}}(f{\bm e}_j))(x)\|\le &t^{\alpha-1}e^{-\frac{t}{n}}\|(V_n(x))^{\alpha}\|\|(e^{-t\bm{A}}(f{\bm e}_j))(x)\|\notag\\
\le &\kappa^{\alpha}t^{\alpha-1}(\lambda_V(x))^{\alpha}(e^{-tH}|f|)(x)
\label{stima-conv}
\end{align}
for every $t>0$, $n\in\N$, and almost every $x\in\R^d$. In analogy with Proposition \ref{proposition:measurability}, we denote by $w_H:(0,\infty)\times\R^d\to\R$ a measurable function such that there exists a measurable set $E'\subset (0,\infty)$ with complement of zero measure and, for every $t\in E'$, a measurable subset $F_t'$ of $\R^d$, with complement of measure zero satisfying 
$(e^{-tH}|f|)(x)=w_H(t,x)$ for every $t\in E'$, $x\in F_t'$.

Combining the previous properties, we conclude that there exist $E''\subset (0,\infty)$, with complement of measure zero and, for every $t\in E''$, a measurable set $F_t''\subset\R^d$, with complement of zero measure such that
\begin{align*}
t^{\alpha-1}e^{-\frac{t}{n}}\|(V_n(x))^{\alpha}\bm{w}_A(t,x)\|
\le \kappa^{\alpha}t^{\alpha-1}(\lambda_V(x))^{\alpha}w_H(t,x)
\end{align*}
for every $t\in E''$ and every $x\in F_t''$.
Clearly, for $t\in E''$, $t^{\alpha-1}e^{-\frac{t}{n}}V_n^{\alpha}\bm{w}_A(t,\cdot)$ tends to $t^{\alpha-1}V^{\alpha}\bm{w}_A(t,\cdot)$, pointwise in $F_t''$ as $n$ tends to $\infty$. 
We claim that the function on the right-hand side of \eqref{stima-conv} belongs to $L^1((0,\infty))$ for almost every $x\in\R^d$. To prove the claim, we observe that, 
from \cite[Theorem 2.2]{KW25}, it follows that there exists a positive constant $c$, independent of $n$ and $f$, such that
\begin{align}
\|\lambda_V^\alpha\chi_{\{n^{-1}<\lambda_V<n\}}(n^{-1}+H)^{-\alpha}f\|_p\leq c\|f\|_p, \qquad\;\, n\in\N.
\label{stima-KW25}
\end{align}
Moreover,  $t^{\alpha-1}e^{-\frac{t}{n}}w_H(t,x)$ monotonically converges to $t^{\alpha-1}w_H(t,x)$ as $n$ tends to infinity for every $t\in E''$ and every $x\in F_t''$. 
Hence, for almost every $x\in\R^d$,
\begin{eqnarray*}
((n^{-1}+H)^{-\alpha})(x)=\frac{1}{\Gamma(\alpha)}
\int_0^{\infty}t^{\alpha-1}e^{-\frac{t}{n}}w_H(t,x)dt
\end{eqnarray*}
tends monotonically to $\int_0^{\infty}t^{\alpha-1}w_H(t,x)dt$. 

From \eqref{stima-KW25} and Fatou's lemma we infer that
\begin{align*}
\bigg\|\lambda_V^\alpha\int_0^\infty t^{\alpha-1}w_H(t,\cdot)dt\bigg\|_p\leq c\|f\|_p.    
\end{align*}
Hence, for almost every $x\in\R^d$ 
the function $t\mapsto t^{\alpha-1}(\lambda_V(x))^{\alpha}w_H(t,x)$ belongs to $L^1((0,\infty))$.
We can thus apply the dominated convergence theorem and conclude that
\begin{align*}
({\bm R}^{\alpha,n}(f{\bm e}_j))(x)=&(V_n(x))^{\alpha}((n^{-1}+\bm{A})^{-\alpha}(f\bm e_j))(x)\\
= & \frac{1}{\Gamma(\alpha)}\int_0^\infty t^{\alpha-1}e^{-\frac{t}{n}}(V_n(x))^{\alpha}\bm{w}_A(t,x)dt
\end{align*}
converges for almost every in $x\in\R^d$ as $n$ tends to $\infty$, and the limit is denoted by $V^\alpha {\bm A}^{-\alpha}(f{\bm e}_j)$. 
Finally, for a general $\bm{f}\in C^{\infty}_c(\R^d;\C^m)$, the convergence follows by linearity, writing $\bm{f}=\sum_{j=1}^mf_j\bm{e}_j$.

{\em Step 2}. Fix $p\in(1,2]$ and $n\in\N$. Proposition \ref{prop:bdd:R:a,e:1/p} yields $\bm{R}^{\frac1p,n}\in {\mathcal B}(L^p(\R^d;\C^m))$, while $\bm{R}^{0,n}=\pmb{\operatorname{Id}}$ belongs to ${\mathcal B}(L^{p}(\R^d;\C^m))$ by construction. Moreover, for both $\bm{R}^{\frac1p,n}$ and $\pmb{\operatorname{Id}}$ the operator norm in ${\mathcal B}(L^p(\R^d;\C^m))$ is bounded by a constant which is independent of $n$. Applying Proposition \ref{prop:KW:gen} to $\alpha_0=0$ and $\alpha_1=1/p$, we infer that $\bm{R}^{\alpha,n}$ is a bounded operator in $L^p(\R^d;\C^m)$ for every $\alpha\in [0,1/p]$ and its operator norm satisfies $\|\bm{R}^{\alpha,n}\|_{{\mathcal B}(L^{p}(\R^d;\C^m))}\le c(\alpha,p)$, which is a positive constant independent of $n$. 

Fix now $\bm f\in C^\infty_c(\R^d;\C^m)$ and $\alpha\in [0,1/p]$. Since $\bm{R}^{\alpha,n}\f$ converges to $V^{\alpha}\bm{A}^{-\alpha}f$ pointwise almost everywhere in $\R^d$, as $n$ tends to $\infty$, by Fatou's lemma we have
\begin{align}
\label{V-riesz}
\int_{\R^d}\|V^{\alpha}\bm{A}^{-\alpha}\bm f\|^pdx
&\le  \liminf_{n\to\infty}\int_{\R^d}\|\bm{R}^{\alpha,n}\f\|^pdx\le c(\alpha,p)\|\bm f\|_p^p.
\end{align}
By density, we conclude that \eqref{V-riesz} holds true for every $\bm f\in L^p(\R^d;\C^m)$.
\end{proof}

\begin{rem}
If $\alpha=\frac12$, then for every $\bm f\in L^2(\R^d;\C^m)$ the sequence $(\bm{R}^{\frac{1}{2},n}\bm f)_{n\in\N}$ converges in $L^2(\R^d;\C^m)$ to $V^{\frac12}\bm{A}^{-\frac12}\f$. Indeed, the definition of $\mathfrak a$ gives $\|V_n^\frac12 \bm f\|_2^2
\leq  \|(n^{-1}+{\bm A})^\frac12 \bm f\|_2^2$ for every $\bm f\in D(\mathfrak a)$, which implies
$\|\bm{R}^{\frac{1}{2},n}\bm f\|_2^2
\leq \|\bm f\|_2^2$ for every $\bm f\in L^2(\R^d;\C^m)$ and every $n\in\N$. Since $\|\bm{R}^{\frac{1}{2},n}\bm f\|_2^2\leq \|{\bm A}^{\frac{1}{2}}(n^{-1}+\bm{A})^{-\frac12}\bm f\|_2^2$ and we have already proved that $(\bm{A}^\frac12(n^{-1}+\bm{A})^{-\frac12}\bm f)_{n\in\N}$ converges in $L^2(\R^d;\C^m)$, we infer that 
$(\bm{R}^{\frac{1}{2},n}\bm f)_{n\in\N}$ is a Cauchy sequence in $L^2(\R^d;\C^m)$, so that it converges.
In particular, is this case the limit operator $V^{\frac12}\bm{A}^{-\frac12}$ is a contraction on $L^2(\R^d;\C^m)$.
\end{rem}

\appendix

\section{Basic results}
\label{appendix}

In this appendix, we collect some basic results that have been used in the paper. 
For the sake of completeness, we provide detailed proofs.

\subsection{A measure theoretical result}
Here, we state and prove the following result which is essentially shown in \cite[Theorem 4.2]{Neidhardt}, see also \cite[Lemma 2.2]{RS99}. We recall that, for $E\subset\R$ and $p\in[1,\infty)$, $C_b(E;L^p(\R^d;\C))$ denotes the set of all functions $u\colon E\to L^p(\R^d;\C)$ that are bounded and continuous.

\begin{prop}
\label{lemma:rappr_misurabile}
Fix $p\in[1,\infty)$. For every function $u\in C_b([0,\infty);L^p(\R^d;\C))$ there exists a function $\widetilde u:(0,\infty)\times \R^d\to\C$, belonging to $\widetilde u\in L^p((0,T)\times \R^d;\C)$ for every $T>0$, such that $u(t)=\widetilde u(t,\cdot)$ almost everywhere in $\R^d$, for almost every $t\in[0,\infty)$. Further, for every measurable function $v:(0,\infty)\times \R^d\to \C$ which satisfies $\widetilde u=v$ almost everywhere in $(0,\infty)\times \R^d$ and every $f\in L^1((0,\infty);\C)$, it holds that
\begin{align}
\label{ug_int_punt}
\int_0^\infty f(t)v(t,x)dt=\bigg (\int_0^\infty f(t)u(t)dt\bigg )(x), \qquad\;\, \textit{a.e. }x\in\R^d.    
\end{align}
\end{prop}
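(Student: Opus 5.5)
The plan is to build $\widetilde u$ from step-function approximations of $u$, and to obtain \eqref{ug_int_punt} by testing against indicator functions of sets in $\R^d$ and using Fubini.

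Fix $T>0$. Since $u$ is continuous on $[0,T]$, it is uniformly continuous there. For $k\in\N$ we set
\[
u_k(t)=\sum_{j}\mathds 1_{[jT2^{-k},(j+1)T2^{-k})}(t)\,u(jT2^{-k}).
\]
Each $u_k$ is a finite sum of products $\mathds 1_{I_j}(t)g_j(x)$ with $g_j\in L^p(\R^d;\C)$. We fix representatives of the $g_j$, so each $u_k$ is a jointly measurable function on $(0,T)\times\R^d$. By uniform continuity,
\[
\sup_{t\in(0,T)}\|u_k(t)-u(t)\|_p\to0,
\]
and hence $(u_k)$ is Cauchy in $L^p((0,T)\times\R^d)$, because $\|u_k-u_l\|^p_{L^p((0,T)\times\R^d)}=\int_0^T\|u_k(t)-u_l(t)\|_p^pdt$. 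Let $\widetilde u_T$ be the limit. A subsequence converges almost everywhere in $(0,T)\times\R^d$. By Fubini, for almost every $t$ the slices $u_{k}(t,\cdot)$ converge in $L^p(\R^d)$ to $\widetilde u_T(t,\cdot)$ along a further subsequence. Since they also converge to $u(t)$, we get $u(t)=\widetilde u_T(t,\cdot)$ a.e. in $\R^d$ for a.e. $t\in(0,T)$.

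To pass to $(0,\infty)$ we take $T=N\in\N$. For different $N$ the limits agree a.e. on the common domain, since both equal $u(t)$ slice-wise. We then define $\widetilde u=\widetilde u_N$ on $(N-1,N]\times\R^d$, which gives a globally measurable function with the required properties.

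For \eqref{ug_int_punt}, fix $v$ equal to $\widetilde u$ a.e. and $f\in L^1((0,\infty))$. Since $u$ is bounded, the Bochner integral $\int_0^\infty f(t)u(t)\,dt$ exists in $L^p(\R^d)$. Let $E\subset\R^d$ have finite measure. Then $g\mapsto\int_E g\,dx$ is a bounded functional on $L^p$ and commutes with the Bochner integral, so
\[
\int_E\Big(\int_0^\infty f u\,dt\Big)dx=\int_0^\infty f(t)\int_E u(t)\,dx\,dt.
\]
The right-hand side equals $\int_0^\infty f(t)\int_E v(t,x)\,dx\,dt$ by the slice identity. We then apply Fubini, which is justified because
\[
\int_0^\infty|f(t)|\int_E|v(t,x)|\,dx\,dt\le|E|^{1-1/p}\sup_t\|u(t)\|_p\,\|f\|_1<\infty.
\]
This yields $\int_E\int_0^\infty f v\,dt\,dx$. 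The same bound shows that $x\mapsto\int_0^\infty f(t)v(t,x)\,dt$ is defined a.e. and is locally integrable. Since $E$ is arbitrary, the two functions coincide a.e. in $\R^d$.

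The main obstacle is measure-theoretic bookkeeping rather than analysis. The equality $u(t)=\widetilde u(t,\cdot)$ holds only for almost every $t$, so the slice identity must enter inside the $dt$-integral, where null sets of $t$ are harmless. We also need to check that replacing $\widetilde u$ by $v$ changes only a null set of slices, which follows from Fubini applied to $\{\widetilde u\neq v\}$.
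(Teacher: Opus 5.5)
Your proof is correct. Its first half is essentially the paper's construction. The paper also approximates $u$ in $L^p((0,T);L^p(\R^d))$ by simple functions $\sum_i\mathds 1_{B_i}(t)\psi_i$, reads them as jointly measurable functions, and takes the limit in $L^p((0,T)\times\R^d)$. Your dyadic step functions, built from uniform continuity, are one particular choice of such approximants. Your explicit patching over $T=N$ makes precise a point the paper leaves implicit (``the arbitrariness of $T$''). One small fix is needed in the slice identity: justify it by Tonelli, $\int_0^T\|u_k(t)-\widetilde u_T(t,\cdot)\|_p^p\,dt\to0$, which gives $L^p$-convergence of the slices for a.e.\ $t$ along a subsequence. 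A.e.\ convergence on the product alone does not give that, although combined with $u_k(t)\to u(t)$ in $L^p(\R^d)$ it would also yield the identity.

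The second half takes a genuinely different route. The paper proves \eqref{ug_int_punt} first for $f=\mathds 1_B$ with $B$ bounded. It passes to the limit in the trivial identity for the simple approximants via Minkowski's integral inequality in $L^p(\R^d)$. It then extends by linearity to simple $f$ and by density to all $f\in L^1$, using $\sup_t\|u(t)\|_p$ to control both sides. You instead handle every $f\in L^1$ at once by duality in $x$: you test against $\mathds 1_E$, commute this bounded functional with the Bochner integral, insert the slice identity under the $dt$-integral, and apply Fubini with the bound $|E|^{1-1/p}\sup_t\|u(t)\|_p\|f\|_1$.

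Your version avoids approximating $f$ altogether. It also shows directly that $x\mapsto\int_0^\infty f(t)v(t,x)\,dt$ is a.e.\ defined and locally integrable. The paper's version stays entirely at the level of $L^p$-norm estimates for the approximants and never identifies Bochner integrals through functionals. The price is the two-stage extension from indicators to simple functions to $L^1$.
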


\begin{proof}
Fix $p\in[1,\infty)$, $u\in C_b([0,\infty);L^p(\R^d;\C))$, $T>0$. Since $u\in L^p((0,T);L^p(\R^d;\C))$ there exists a sequence of simple functions
\begin{align*}
u_n(t)=\sum_{i=1}^{m_n}\mathds 1_{B_i}(t)\psi_i, \qquad\;\, t\in(0,T), \;\, B_i\in\mathscr{L} ((0,T)),\;\, \psi_i\in L^p(\R^d;\C), \;\, n\in\N,      
\end{align*}
such that $u_n$ converges to $u$ in $L^p((0,T);L^p(\R^d))$. For every $n\in\N$, we set
\begin{align*}
\widetilde u_n(t,x)=u_n(t)(x)=\sum_{i=1}^{m_n}\mathds 1_{B_i}(t)\psi_i(x), \qquad\;\, (t,x)\in (0,T)\times \R^d,    
\end{align*}
and prove that $\widetilde u_n$ converges in $L^p((0,T)\times \R^d;\C)$ for every $T>0$. To this end, we fix $T>0$ and observe that, since 
\begin{align*}
\int_{(0,T)\times \R^d}|\widetilde u_n(t,x)-\widetilde u_m(t,x)|^pdtdx
= \int_0^T\|u_n(t)- u_m(t)\|_p^pdt
\end{align*}
for every $m,n\in\N$ and $u_n$ converges to $u$ in $L^p((0,T);L^p(\R^d;\C))$, it follows that $(\widetilde u_n)$ is a Cauchy sequence in $L^p((0,T)\times \R^d;\C)$ and therefore it converges to a function $\widetilde u\in L^p((0,T)\times \R^d;\C)$. Further,
\begin{align*}
\lim_{n\to\infty}\int_0^T\|u_n(t)-\widetilde u(t,\cdot)\|_{L^p(\R^d;\C)}^pdt
= \lim_{n\to\infty}\int_{0}^Tdt\int_{\R^d}|\widetilde u_n(t,x)-\widetilde u(t,x)|^pdx=0,  
\end{align*}
so that, for almost every $t\in(0,T)$, $\widetilde u(t,\cdot)=u(t)$ almost everywhere in $\R^d$. The arbitrariness of $T>0$ yields the first part of the statement. 

To complete the proof, we start by noticing that it is sufficient to prove the statement for $\widetilde u$ since
for almost every $x\in\R^d$, the functions $\widetilde u(\cdot,x)$ and $v(\cdot,x)$ coincide almost everywhere in $(0,\infty)$. 

Let $p\in[1,\infty)$ and $B\in\mathscr{L}((0,\infty))$ be a bounded set. Notice that, from definition, for every $n\in\N$ and almost every $x\in\R^d$,
\begin{align*}
\bigg (\int_0^\infty\mathds 1_B(t) u_n(t)dt\bigg )(x)dx
= &  \int_0^\infty\mathds 1_B(t)u_n(t)(x)dt 
= \int_{0}^\infty\mathds 1_B(t)\widetilde u_n(t,x)dt.
\end{align*}
If $T>0$ is such that $B\subset[0,T)$, then
\begin{align*}
\lim_{n\to\infty}\left\|\int_0^\infty\mathds 1_B(t)(u_n(t)-u(t))dt \right\|_p
\leq & \lim_{n\to\infty}\int_0^T\|u_n(t)-u(t)\|_pdt=0.
\end{align*}
By applying Minkowski's integral inequality, we deduce that
\begin{align*}
& \lim_{n\to\infty}\left\|\int_{0}^\infty\mathds 1_B(t)\widetilde u_n(t,\cdot)dt-\int_{0}^\infty\mathds 1_B(t)\widetilde u(t,\cdot)dt\right\|_p^p  \\
= &\lim_{n\to\infty} \int_{\R^d}\left|\int_0^\infty \mathds 1_B(t)(\widetilde u_n(t,x)-\widetilde u(t,x))dt\right|^pdx \\
\leq & |B|^{\frac{p}{p'}} \lim_{n\to\infty}\int_{(0,T)\times\R^d}|\widetilde u_n(t,x)-\widetilde u(t,x)|^pdtdx=0,
\end{align*}
with the convention $|B|^{\frac{1}{\infty}}:=1$.
It follows that 
\begin{align*}
\bigg (\int_0^\infty\mathds 1_B(t)u(t)dt\bigg )(x)=\int_0^\infty\mathds 1_B(t)\widetilde u(t,x)dt,\qquad\;\,\textit{a.e. }x\in\R^d. 
\end{align*}
We have proved that \eqref{ug_int_punt} holds true for every characteristic function $f$ with bounded support and, by linearity for every
simple function $f$ with bounded support.

Let us now fix $f\in L^1((0,\infty);\C)$ and choose a sequence $(f_n)$ of simple functions with bounded support which converges to $f$ in $L^1((0,\infty))$. Then, 
\begin{align*}
\lim_{n\to\infty}\left\|\int_0^\infty f_n(t)u(t)dt-\int_0^\infty f(t)u(t)dt\right\|_p
\leq & \lim_{n\to\infty}\int_0^\infty |f_n(t)-f(t)|\|u(t)\|_pdt \\
\leq & \sup_{t\in[0,\infty)}\|u(t)\|_p\lim_{n\to\infty}
\|f_n-f\|_1
=0.
\end{align*}
Similarly, by applying Minkowski's integral  inequality,
\begin{align*}
\lim_{n\to\infty}\left\|\int_0^\infty f_n(t)\widetilde u(t,\cdot)dt-\int_0^\infty f(t)\widetilde u(t,\cdot)dt \right\|_p 
\leq &\lim_{n\to\infty}\int_0^\infty\|\widetilde u(t,\cdot)\|_p|f_n(t)-f(t)|dt \\
\leq & \esssup_{t\in(0,\infty)}\|\widetilde u(t,\cdot)\|_p\lim_{n\to\infty}\|f_n-f\|_1\\
= & \sup_{t\in(0,\infty)}\|u(t)\|_p\lim_{n\to\infty}\|f_n-f\|_1=0.
\end{align*}
Since
\begin{align*}
\bigg (\int_0^\infty f_n(t)u(t)dt\bigg )(x)
= \int_0^\infty f_n(t)\widetilde u(t,x)dt, \qquad\;\, \textit{a.e. }x\in\R^d,    
\end{align*}
for every $n\in\N$, the assertion follows.
\end{proof}

\subsection{Some properties of \texorpdfstring{$B_q$}{Bq}-functions}

In this subsection, we show that, if $w\in B_q$ for some $q\in(1,\infty)$, then $w\wedge n\in B_q$ and $[w\wedge n]_{B_q}$ can be estimated uniformly with respect to $n\in\N$. For this purpose, we begin by recalling some preliminary results about Muckenhoupt weights. In particular, a nonnegative locally integrable function $w$ belongs to $A_r$, the Muckenhoupt class with exponent $r\in(1,\infty)$, if 
\begin{equation*}
    [w]_{A_r}\coloneqq\sup_{Q}\bigg (\frac 1{|Q|}\int_Qw(x)\,dx\bigg )\bigg (\frac1{|Q|}\int_Q w^{-\frac{r'}{r}}(x)\,dx\bigg )^{\frac{r}{r'}}<\infty,
\end{equation*}
where $r'$ is the conjugate exponent of $r$, and $w$ belongs to $A_1$ if
\begin{equation*}
    [w]_{A_1}\coloneqq\sup_{Q}\bigg (\frac 1{|Q|}\int_Qw(x)\,dx\bigg )\|w^{-1}\|_{L^\infty(Q)}<\infty.
\end{equation*}
Here and in the following with $Q$ we denotes a cube of $\R^d$ with sides parallel to the axes.

For later use, for $a\in (0,1)$, $\gamma, r>0$, we introduce the functions
$\mathfrak{f}_{a,\gamma,r}, \mathfrak{g}_{a,r}:[1,\infty)\to\mathbb\R$, defined by
\begin{equation*}
    \mathfrak{f}_{a,\gamma,r}(t)\coloneqq1+\frac{(2^da^{-1})^\gamma}{1-(2^da^{-1})^\gamma(1-(1-a)^rt^{-1})},\qquad\;\,\mathfrak{g}_{a,r}(t)\coloneqq\frac{\log(t)-\log(t-(1-a)^r)}{d\log (2)-\log(a)}
\end{equation*}
for every $t\in [1,\infty)$.  

\begin{lemma}
\label{lem:prop:Ap}
For every $w\in A_r$ $(r\in [1,\infty))$, the following properties hold:
\begin{enumerate}[\rm(i)]
\item 
$w\wedge k\in A_r$ and $[w\wedge k]_{A_r}\leq (1\vee 2^{r-2})([w]_{A_r}+1)$
for every $k>0$;
\item 
for every $a\in (0,1)$ and every $\gamma\in (0,\mathfrak{g}_{a,r}([w]_{A_r}))$, it holds that
\begin{align}
\label{reverse_holder_gamma}
\left(\frac{1}{|Q|}\int_Q(w(x))^{1+\gamma}dx\right)^{\frac1{1+\gamma}}\leq \frac{\mathfrak{f}_{a,\gamma,r}([w]_{A_r})}{|Q|}\int_Qw(x)dx   \end{align}
for every cube $Q$;
\item 
for every cube $Q$ and every measurable subset $A\subseteq Q$, it holds that
\begin{align}
\label{ineq_sets}
\int_Aw(x)dx\leq \mathfrak{f}_{a,\gamma,r}([w]_{A_r})\left(\frac{|A|}{|Q|}\right)^{\frac{\gamma}{1+\gamma}}\int_Qw(x)dx
\end{align}
for every $\gamma$ as in $(ii)$.
\end{enumerate}    
\end{lemma}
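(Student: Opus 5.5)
For (i) we treat $r>1$ and $r=1$ separately and compare the truncated weight $w\wedge k$ with $w$ pointwise. Fix a cube $Q$. Since $w\wedge k\le w$ and $w\wedge k\le k$, we have
\[
\frac1{|Q|}\int_Q (w\wedge k)\,dx\le \min\Big\{\frac1{|Q|}\int_Q w\,dx,\;k\Big\}.
\]
For $r>1$ put $\sigma=r'/r=1/(r-1)$. Then
\[
(w\wedge k)^{-\sigma}=\max\{w^{-\sigma},k^{-\sigma}\}\le w^{-\sigma}+k^{-\sigma}.
\]
Applying the elementary inequality $(s+t)^{r-1}\le (1\vee 2^{r-2})(s^{r-1}+t^{r-1})$ to the average of this sum gives
\[
\Big(\frac1{|Q|}\int_Q (w\wedge k)^{-\sigma}\,dx\Big)^{r-1}\le (1\vee 2^{r-2})\Big[\Big(\frac1{|Q|}\int_Q w^{-\sigma}\,dx\Big)^{r-1}+k^{-1}\Big].
\]
Now multiply by the average of $w\wedge k$. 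For the first term, bound that average by the average of $w$, which yields $[w]_{A_r}$. For the term containing $k^{-1}$, bound it by $k$, which yields $1$. This gives $[w\wedge k]_{A_r}\le(1\vee2^{r-2})([w]_{A_r}+1)$. For $r=1$ the same argument works with $\|(w\wedge k)^{-1}\|_{L^\infty(Q)}=\max\{\|w^{-1}\|_{L^\infty(Q)},k^{-1}\}$.

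For (ii) we run the classical Gehring-type argument (Coifman–Fefferman) and track the constants; the shape of $\mathfrak f_{a,\gamma,r}$ and $\mathfrak g_{a,r}$ indicates the bookkeeping. The first step is the standard consequence of the $A_r$ condition, obtained by H\"older's inequality:
\[
\Big(\frac{|E|}{|Q|}\Big)^r\le [w]_{A_r}\frac{w(E)}{w(Q)},\qquad E\subseteq Q.
\]
Applying it to $Q\setminus E$ shows that $|E|\le a|Q|$ implies $w(E)\le\beta\,w(Q)$ with $\beta:=1-(1-a)^r[w]_{A_r}^{-1}$.

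Next set $\lambda_0=|Q|^{-1}\int_Q w\,dx$ and $\lambda_j=(2^da^{-1})^j\lambda_0$. At each level $\lambda_j$ we perform a Calder\'on–Zygmund decomposition of $w$ on $Q$ and obtain families of maximal dyadic subcubes $\{Q^j_i\}$. These satisfy $\lambda_j<|Q^j_i|^{-1}\int_{Q^j_i}w\le 2^d\lambda_j$ and are nested in $j$. On each cube $Q^{j}_i$, the union of the level-$(j+1)$ cubes it contains has measure at most $a|Q^j_i|$; this is because $\lambda_{j+1}=2^da^{-1}\lambda_j$ and the average of $w$ over $Q^j_i$ is at most $2^d\lambda_j$. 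Hence, by the first step, $w(\Omega_{j+1})\le\beta\,w(\Omega_j)$ where $\Omega_j=\bigcup_i Q^j_i$. Iterating gives $w(\Omega_j)\le \beta^j w(Q)$. By Lebesgue differentiation, $w\le\lambda_{j+1}$ a.e.\ on $\Omega_j\setminus\Omega_{j+1}$ and $w\le\lambda_0$ a.e.\ off $\Omega_0$. Therefore
\[
\int_Q w^{1+\gamma}\,dx\le \lambda_0^\gamma w(Q)+\sum_{j\ge0}\lambda_{j+1}^\gamma\beta^j w(Q)
= \lambda_0^\gamma w(Q)\Big(1+\frac{(2^da^{-1})^\gamma}{1-(2^da^{-1})^\gamma\beta}\Big).
\]
The geometric series converges exactly when $(2^da^{-1})^\gamma\beta<1$, that is, when $\gamma<\mathfrak g_{a,r}([w]_{A_r})$. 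Dividing by $|Q|$, using $w(Q)=\lambda_0|Q|$, and taking the $(1+\gamma)$-th root (note $\mathfrak f\ge1$, so its root is at most $\mathfrak f$) gives \eqref{reverse_holder_gamma}. The main obstacle is precisely this stopping-time bookkeeping: checking the nesting, the measure bound $a|Q^j_i|$, and the a.e.\ bound on the layers so that the constant comes out exactly as $\mathfrak f_{a,\gamma,r}$.

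For (iii), H\"older's inequality with exponents $1+\gamma$ and $(1+\gamma)/\gamma$ gives
\[
\int_A w\,dx\le |A|^{\frac{\gamma}{1+\gamma}}\Big(\int_Q w^{1+\gamma}\,dx\Big)^{\frac1{1+\gamma}}.
\]
Inserting (ii), written as $\big(\int_Q w^{1+\gamma}\big)^{1/(1+\gamma)}\le \mathfrak f_{a,\gamma,r}([w]_{A_r})\,|Q|^{-\gamma/(1+\gamma)}\int_Q w\,dx$, yields \eqref{ineq_sets}.
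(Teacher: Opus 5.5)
Your proof is correct and follows the paper. Part (i) uses the same comparison $(w\wedge k)^{-r'/r}\le w^{-r'/r}+k^{-r'/r}$, together with $(s+t)^{r-1}\le(1\vee 2^{r-2})(s^{r-1}+t^{r-1})$ and $\frac{1}{|Q|}\int_Q (w\wedge k)\,dx\le k\wedge\frac{1}{|Q|}\int_Q w\,dx$; your $r=1$ treatment via $\|(w\wedge k)^{-1}\|_{L^\infty(Q)}=\max\{\|w^{-1}\|_{L^\infty(Q)},k^{-1}\}$ is a tidier version of the paper's case split. For (ii)--(iii) the paper only cites Grafakos's reverse H\"older and $A_\infty$ theorems, and your Calder\'on--Zygmund stopping-time argument at the levels $(2^da^{-1})^j\lambda_0$, followed by H\"older's inequality, is exactly the standard proof behind that citation, producing the constants $\mathfrak{f}_{a,\gamma,r}$ and $\mathfrak{g}_{a,r}$.
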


\begin{proof}
(i) Let $w\in A_r$, $k>0$ and set $w_k:=w\wedge k$.

We first consider the case $r\in (1,\infty)$. Fix a cube $Q$. It is immediate to check,
\begin{align*}
\frac{1}{|Q|}\int_Q w_k^{-\frac{r'}{r}}dx
\le \frac{1}{|Q|}\int_Q w^{-\frac{r'}{r}}dx
+ k^{-\frac{r'}{r}}.
\end{align*}

Using the inequality
$(a+b)^{r-1} \le (1\vee 2^{r-2})(a^{r-1}+b^{r-1})$, which holds for every $a,b>0$,
we deduce that
\begin{align*}
\bigg (\frac{1}{|Q|}\int_Q w_k^{-\frac{r'}{r}}dx\bigg )^{r-1}
\le (1\vee 2^{r-2})\bigg [\bigg (\frac{1}{|Q|}\int_Q w^{-\frac{r'}{r}}dx\bigg )^{r-1}+ \frac{1}{k}\bigg ].
\end{align*}
Observing that
\begin{align*}
\frac{1}{|Q|}\int_{Q}w_kdx\le k\wedge \frac{1}{|Q|}\int_Qwdx,
\end{align*}
we deduce that
\begin{align*}
&\frac{1}{|Q|}\int_{Q}w_kdx \bigg (\frac{1}{|Q|}\int_Q w_k^{-\frac{r'}{r}}dx\bigg )^{r-1}\\
\le &(1\vee 2^{r-2})\left(k\wedge \frac{1}{|Q|}\int_{Q}wdx\right)\bigg [\bigg (\frac{1}{|Q|}\int_Q w^{-\frac{r'}{r}}dx\bigg )^{r-1}+ \frac{1}{k}\bigg ]\\
\le &(1\vee 2^{r-2})([w]_{A_r}+1).
\end{align*}
Taking the supremum over all the cubes $Q$ yields the claim.

The case $r=1$ is easier. Fix a cube $Q$. If $\|w^{-1}\|_{L^{\infty}(Q)}\le k$, then 
\begin{align*}
\bigg (\frac{1}{|Q|}\int_Qw_kdx\bigg )\|w_k^{-1}\|_{L^{\infty}(Q)}=&\bigg (\frac{1}{|Q|}\int_Qw_kdx\bigg )\|w^{-1}\|_{L^{\infty}(Q)}\\
\le &\bigg (\frac{1}{|Q|}\int_Qwdx\bigg )\|w^{-1}\|_{L^{\infty}(Q)}\leq [w]_{A_1}.
\end{align*}

If, instead, $\|w^{-1}\|_{L^\infty(Q)}>k$, then $w_k=k$ almost everywhere on $Q$, and hence
\begin{eqnarray*}
\bigg (\frac{1}{|Q|}\int_Qw_kdx\bigg )\|w^{-1}_k\|_{L^\infty(Q)}=1.
\end{eqnarray*}

Summarizing, we have proved that
\begin{align*}
\bigg (\frac{1}{|Q|}\int_Qw_kdx\bigg )\|w_k^{-1}\|_{L^\infty(Q)}\le [w]_{A_1}+1.
\end{align*}
Taking the supremum over all cubes yields the claim also in this case.

(ii)-(iii). These properties follow from \cite{grafakos-modern} (see Theorems 9.2.2 \& 9.3.3).
\end{proof}

Based on Lemma \ref{lem:prop:Ap}, we can now prove that $w\wedge n$ belongs to $B_q$ if $w$ does.

\begin{lemma}
\label{lemma:Bp_truncate}
The following properties are satisfied.
\begin{enumerate}[\rm (i)]
\item
If $w\in B_q$ for some $q\in(1,\infty)$, then $w\wedge n\in B_q$ for every $n\in\N$ and $[w\wedge n]_{B_q}$ is uniformly bounded with respect to $n\in\N$. 
\item 
If $w\in B_\infty$ then $w\wedge n\in B_q$ for every $q\in(1,\infty)$ and $[w\wedge n]_{B_q}$ can be estimated independently of $n\in\N$.
\item 
For every $n\in\N$ and every $\varepsilon>0$, the function $(w\wedge n)+\varepsilon$ belongs to $B_q$ and $[(w\wedge n)+\varepsilon]_{B_q}$ is bounded uniformly with respect to $n\in\N$ and $\varepsilon>0$. 
\end{enumerate} 
\end{lemma}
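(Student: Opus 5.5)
The plan is to pass from the reverse H\"older condition to a Muckenhoupt condition, and then use Lemma \ref{lem:prop:Ap}(i) to control truncations. Recall the classical fact that $w\in B_q$ for some $q\in(1,\infty]$ implies $w\in A_\infty=\bigcup_{r\ge1}A_r$; more precisely, $w\in A_r$ for some $r$ with $[w]_{A_r}$ controlled by $[w]_{B_q}$ and $d$. I will take this from the standard theory, e.g.\ \cite{grafakos-modern}. Once $w\in A_r$, Lemma \ref{lem:prop:Ap}(i) gives $w\wedge n\in A_r$ with $[w\wedge n]_{A_r}\le(1\vee 2^{r-2})([w]_{A_r}+1)$, a bound uniform in $n$.

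\textbf{Part (i).} The direct route is to check the $B_q$ inequality cube by cube. Fix a cube $Q$ and set $w_n=w\wedge n$ and $E=\{x\in Q: w(x)>n\}$. On $Q\setminus E$ we have $w_n=w$, and on $E$ we have $w_n=n$. Then
\begin{equation*}
\Big(\frac1{|Q|}\int_Q w_n^q\Big)^{1/q}\le \Big(\frac1{|Q|}\int_{Q\setminus E} w^q\Big)^{1/q}+n\Big(\frac{|E|}{|Q|}\Big)^{1/q}.
\end{equation*}
The second term is at most $n|E|/|Q|$ only when $|E|=|Q|$, so this bound does not close by itself.

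For that reason I would use the $A_r$ route instead. Since $w_n\in A_r$ uniformly, Lemma \ref{lem:prop:Ap}(ii) yields a reverse H\"older inequality for $w_n$ with exponent $1+\gamma$. Here $\gamma<\mathfrak g_{a,r}(C)$, where $C$ is the uniform $A_r$ bound, so the constant is uniform in $n$. This exponent may be smaller than $q$, however.

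To reach exactly $q$, I would instead prove the estimate directly. The pointwise inequality $w_n^q\le w^{q}\wedge n^q$ holds. I split into two cases.

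\emph{Case $|E|\le\frac12|Q|$.} Here $\int_Q w_n\ge\int_{Q\setminus E}w$. Applying Lemma \ref{lem:prop:Ap}(iii) to $w$ with $A=E$ gives $\int_E w\le \mathfrak f\, 2^{-\gamma/(1+\gamma)}\int_Q w$. This is useful only if $\mathfrak f\,2^{-\gamma/(1+\gamma)}<1$. If it is not, I would replace $\tfrac12$ by a threshold $\eta$ so small that $\mathfrak f\,\eta^{\gamma/(1+\gamma)}\le\frac12$. Then $\int_{Q\setminus E}w\ge\frac12\int_Q w$. Combining $\int_Q w_n^q\le\int_Q w^q\le [w]_{B_q}^q|Q|\big(\frac1{|Q|}\int_Q w\big)^q$ with $\int_Q w_n\ge\frac12\int_Q w$ gives the $B_q$ bound with constant $2[w]_{B_q}$.

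\emph{Case $|E|>\eta|Q|$.} Here $\frac1{|Q|}\int_Q w_n\ge n\eta$, while $w_n\le n$. Hence $\big(\frac1{|Q|}\int_Q w_n^q\big)^{1/q}\le n\le \eta^{-1}\frac1{|Q|}\int_Q w_n$.

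Both cases give constants depending only on $[w]_{B_q}$ and $d$, which proves (i). The main obstacle is making $\eta$ depend only on $[w]_{B_q}$. This is exactly where the $A_\infty$ property, namely Lemma \ref{lem:prop:Ap}(iii) applied to $w$ itself, is needed.

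\textbf{Part (ii).} If $w\in B_\infty$, then $w\in B_q$ for every $q$ with $[w]_{B_q}\le[w]_{B_\infty}$, so (i) applies.

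\textbf{Part (iii).} Write $u=w\wedge n$ and fix $\varepsilon>0$. Minkowski's inequality gives
\begin{equation*}
\Big(\frac1{|Q|}\int_Q(u+\varepsilon)^q\Big)^{1/q}\le\Big(\frac1{|Q|}\int_Q u^q\Big)^{1/q}+\varepsilon\le [u]_{B_q}\frac1{|Q|}\int_Q u+\varepsilon.
\end{equation*}
This is at most $\max\{[u]_{B_q},1\}\frac1{|Q|}\int_Q(u+\varepsilon)$. Therefore $[u+\varepsilon]_{B_q}\le[u]_{B_q}$, using $[u]_{B_q}\ge1$, and this is uniform in $n$ and $\varepsilon$ by (i).
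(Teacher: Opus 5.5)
Your final two-case argument is correct, but it takes a different route from the paper's.

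The paper proceeds in four steps:
\begin{itemize}
\item It uses Gehring's lemma to get $w\in B_s$ for some $s>q$, hence $w^q\in B_{s/q}\subset A_\infty$, so $w^q\in A_r$.
\item Lemma \ref{lem:prop:Ap}(i) with $k=n^q$ then gives $A_r$ bounds for $(w\wedge n)^q$ that are uniform in $n$.
\item These yield uniform constants in \eqref{ineq_sets} for $(w\wedge n)^q$.
\item A Chebyshev argument on $E_Q=\{x\in Q: w(x)\wedge n>K\,{\rm av}_Q(w\wedge n)\}$ absorbs the large values and gives $[w\wedge n]_{B_q}\le 2^{1/q}K$.
\end{itemize}

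You instead split cubes according to how much of $Q$ the truncation set $\{w>n\}$ occupies. You only need an absolute-continuity estimate for the untruncated $w$. If that set is small, $w\wedge n$ retains half the mass of $w$ on $Q$, and the $B_q$ inequality for $w$ transfers with constant $2[w]_{B_q}$. If it is large, ${\rm av}_Q(w\wedge n)\ge \eta n\ge\eta\|w\wedge n\|_{L^\infty(Q)}$. This bypasses Gehring's lemma, the stability of $A_r$ under truncation, and the passage to $w^q$.

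Moreover, the step you single out as the main obstacle needs no $A_\infty$ theory at all. H\"older's inequality and the $B_q$ condition give
\[
\int_E w\,dx\le |E|^{\frac{1}{q'}}\bigg(\int_Q w^q\,dx\bigg)^{\frac1q}\le [w]_{B_q}\bigg(\frac{|E|}{|Q|}\bigg)^{\frac{1}{q'}}\int_Q w\,dx .
\]
So $\eta=(2[w]_{B_q})^{-q'}$ works, and you obtain the explicit, dimension-free bound $[w\wedge n]_{B_q}\le (2[w]_{B_q})^{q'}$. The paper's constant, by contrast, depends on $[w^q]_{A_r}$ through $\mathfrak f_{a,\gamma,r}$ and $\mathfrak g_{a,r}$.

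Parts (ii) and (iii) are exactly as in the paper. In a write-up, delete the abandoned attempts (the \emph{direct route} and the discussion of the reverse H\"older exponent $1+\gamma$). Also delete the opening appeal to Lemma \ref{lem:prop:Ap}(i), which your final argument never uses.
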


\begin{proof}
Fix $q\in (1,\infty)$ and $w\in B_q$. By \cite{gehring}, it follows that there exists $s>q$ such that $w\in B_s$. Hence, $w^q\in B_{s/q}$, since by applying H\"older's inequality it follows that
\begin{align*}
\left(\frac1{|Q|}\int_Q ((w(x))^q)^{\frac{s}{q}}dx\right)^{\frac{q}{s}}
\leq \left(\frac{[w]_{B_s}}{|Q|}\int_Q w(x)dx\right)^q
\leq \frac{[w]_{B_s}^q}{|Q|}\int_Q(w(x))^qdx
\end{align*}
for every cube $Q$.  
This implies that $w^q\in A_r$ for some $r\in[1,\infty)$ (see \cite[Theorem 9.3.3]{grafakos-modern}). From Lemma \ref{lem:prop:Ap}(i), we infer that $(w\wedge n)^q$ belongs to $A_r$ as well and $[(w\wedge n)^q]_{A_r}\le (1\vee 2^{r-2})([w^q]_{A^r}+1)$.

Next, we claim that there exist positive constants $\overline\gamma$ and $\overline{c}$, independent of $n$, such that \eqref{reverse_holder_gamma} and \eqref{ineq_sets} are satisfied with $\gamma=\overline\gamma$ and with $\mathfrak{f}_{a,\gamma,r}([w]_{A_r})$ and $[w]_{A_r}$ being replaced, respectively, by $\overline{c}$ and $[w\wedge n]_{A_r}$ for every $n\in\N$. Indeed, for every $a\in(0,1)$, the function $\mathfrak{g}_{a,r}$ is decreasing, while the function $\mathfrak{f}_{a,\gamma,r}$ is increasing. Hence, if we fix $\overline a\in(0,1)$ and choose 
$\overline\gamma=\mathfrak{g}_{\overline a,r}((1\vee 2^{r-2})([w^q]_{A_r}+1)+1)$ and
$\overline c=\mathfrak{f}_{\overline{a},\overline\gamma,r}((1\vee 2^{r-2})([w^q]_{A_r}+1)+1)$, then from Lemma \ref{lem:prop:Ap} 
we get the claim.

We can now prove the three properties in the statement of the lemma.

(i) We fix $n\in\N$, $K>0$ such that $K^{\frac{\overline\gamma}{1+\overline \gamma}}\geq 2\overline c$  and, for every cube $Q$, we introduce the set $E_Q\coloneqq\{x\in Q:(w(x)\wedge n)>K{\rm av}_Q(w\wedge n)\}$. From Chebyshev's inequality, it follows that
\begin{align}
\label{stimaE_Q}
|E_Q|\leq \frac{1}{K{\rm av}_Q(w\wedge n)}\int_Q(w(x)\wedge n)dx= K^{-1}|Q|.  \end{align}
Hence, \eqref{ineq_sets}, with $(w\wedge n)^q$ instead of $w$, applied to the set $E_Q$ and \eqref{stimaE_Q}, yield
\begin{align*}
\int_{E_Q}(w(x)\wedge n)^q dx
\le  \overline cK^{-\frac{\overline \gamma}{1+\overline \gamma}}\int_{Q}(w(x)\wedge n)^qdx\le\frac{1}{2}\int_{Q}(w(x)\wedge n)^qdx. 
\end{align*}
It thus follows that
\begin{align*}
\int_{Q}(w(x)\wedge n)^qdx
= & \int_{E_Q}(w(x)\wedge n)^qdx+\int_{Q\setminus E_Q}(w(x)\wedge n)^qdx \\
\leq & \frac12 \int_{Q}(w(x)\wedge n)^qdx
+(K{\rm av}_Q(w\wedge n))^q|Q\setminus E_Q|,
\end{align*}
which gives
\begin{align*}
\int_Q(w(x)\wedge n)^qdx\leq 2|Q|\left(\frac{K}{|Q|}\int_Q (w(x)\wedge n)dx\right)^q.    
\end{align*}
We conclude that
\begin{align*}
\left(\frac{1}{|Q|}\int_Q(w(x)\wedge n)^qdx\right)^{\frac1q}\leq \frac{2^{\frac1q}K}{|Q|}\int_{Q}(w(x)\wedge n) dx,
\end{align*}
which means that $w\wedge n\in B_q$, with $[w\wedge n]_{B_q}\leq 2^{\frac1q}K$ for every $n\in\N$.

(ii) If $w\in B_\infty$, then $[w]_{B_q}\leq [w]_{B_\infty}$ for every $q\in(1,\infty)$ and the statement follows from the first part of the lemma.

(iii) It follows immediately from Minkowski's inequality
that $(w\wedge n)+\varepsilon$ belongs to $B_q$ and
$[(w\wedge n)+\varepsilon]_{B_q}\le [w\wedge n]_{B_q}$ for every $\varepsilon>0$. 
Since $[w\wedge n]_{B_q}$ can be estimated uniformly with respect to $n\in\N$, the assertion follows at once.
\end{proof}

\subsection{Some properties of Schr\"odinger operators with bounded potential}
In the following lemma, we recall some classical results related to the operator $-\Delta+v$, when $v$ is a measurable and bounded function. These results are used to prove the maximal inequalities for the magnetic Schr\"odinger operator in Section \ref{sec:max_ineq_Lp}.

We consider the realization $T_p$ $(p\in [1,\infty))$ of the formal operator $-\Delta+v$, with domain $D(T_p)=W^{2,p}(\R^d;\C)$, if $p\in (1,\infty)$, and $D(T_1)=\{u\in L^1(\R^d;\C): \Delta u\in L^1(\R^d;\C)$\}.

\begin{lemma}
\label{lemma:operatori_scalari_n}
The following facts are true.    
\begin{enumerate}[\rm(i)]
\item 
For every $p\in[1,\infty)$, the operator $-T_p$
generates a positive strongly continuous analytic semigroup $(e^{-tT_p})_{t\ge 0}$ in $L^p(\R^d;\C)$. The semigroups $(e^{-tT_p})_{t\ge 0}$ are consistent with each other.
\item 
For every $u\in D(T_1)$ it holds
\begin{align}
\label{max_ineq_1_n}
\|vu\|_1\leq \|T_1u\|_1, \qquad \|\Delta u\|_1\leq 2\|T_1u\|_1,
\end{align}
and, if $v\in B_q$ for some $q\in (1,\infty)\cup\{\infty\}$, then there exist a positive constant $c_p$ which depends on $[v]_{B_q}$ such that
\begin{equation}
\|vu\|_p+\|\Delta u\|_p\leq c_p\|T_pu\|_p , \qquad u\in D(T_p),
\label{max_ineq_p_n}
\end{equation}
for every $p\in [1,q]$, if $q<\infty$, and for every $p\in [1,\infty)$, if $q=\infty$. 
\end{enumerate}
\end{lemma}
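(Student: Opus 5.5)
We treat the two parts of the lemma separately. The operator $T_p$ is $-\Delta$ perturbed by a bounded potential, so we will read off its properties from the Gauss--Weierstrass semigroup. The one extra point is that $v\ge 0$ is implicitly assumed: the lemma is applied only with $v=\lambda_V\wedge n$, or with this function plus $\varepsilon$.

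\emph{Part (i).} Since $v\in L^\infty$, multiplication by $v$ is bounded on every $L^p(\R^d;\C)$. Bounded perturbation of the analytic generator $\Delta$ (with domain $W^{2,p}$ for $p>1$, and the maximal domain for $p=1$) gives analytic strongly continuous semigroups. Positivity follows from the Trotter product formula, or from the Dyson--Phillips series applied to $e^{-t(-\Delta+\|v\|_\infty)}$ with the positive perturbation $\|v\|_\infty-v\ge0$. In either form $0\le e^{-tT_p}f\le e^{t\Delta}f$ for $f\ge0$, which also gives contractivity. Consistency holds because both the Gaussian semigroups and the multiplication operators are consistent, and the Dyson--Phillips series is built out of them.

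\emph{Part (ii), case $p=1$.} This is Kato's classical argument (cf. \cite[Lemma 6]{kato:1986}). Fix $u\in D(T_1)$ and set $f=T_1u$. Since $v\ge0$, we can write $u=\lim_{\varepsilon\to0}(\varepsilon+T_1)^{-1}f$. For $f\ge0$ we compute
\[
\int v\,(\varepsilon+T_1)^{-1}f\,dx=\int f\,dx-\int(\varepsilon-\Delta)(\varepsilon+T_1)^{-1}f\,dx+\varepsilon\int(\ldots)\,dx\le\|f\|_1 .
\]
The key inequality behind this is $\int\Delta w\,dx\le0$ for $w\ge0$ in the domain; more precisely, $\int\Delta w\,dx=0$ when $w,\Delta w\in L^1$. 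For general $f$ we use positivity, $|(\varepsilon+T_1)^{-1}f|\le(\varepsilon+T_1)^{-1}|f|$. This yields $\|vu\|_1\le\|T_1u\|_1$, and then $\|\Delta u\|_1\le\|T_1u\|_1+\|vu\|_1\le2\|T_1u\|_1$.

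\emph{Part (ii), case $p>1$.} Here we use Shen's theory \cite{Shen}: for a nonnegative potential $v\in B_q$ one has $\|v(-\Delta+v)^{-1}f\|_p\le c_p\|f\|_p$ for $1\le p\le q$, with $c_p$ depending only on $d$, $p$ and $[v]_{B_q}$. We apply this to $u=T_p^{-1}f$, or to $(\varepsilon+T_p)^{-1}f$ followed by $\varepsilon\to0$ if invertibility is in doubt. The bound for $\Delta u$ then follows by the triangle inequality. For $q=\infty$, note that $B_\infty\subset B_q$ for every $q$, so every $p\in[1,\infty)$ is covered.

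The main obstacle is making sure that Shen's constant depends only on $[v]_{B_q}$, and not on qualitative properties of $v$ such as its size or boundedness. Shen proves his estimates through the auxiliary function $m(x,v)$, and I would verify that every step there is scale invariant and controlled by $[v]_{B_q}$ and $d$ alone. A secondary concern is the endpoint $p=q$, which Shen covers. If needed, we can move slightly past it using Gehring's self-improvement $B_q\subset B_{q+\delta}$ \cite{gehring}.
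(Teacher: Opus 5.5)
Your part (i) and the $L^1$ estimate follow the paper's route, which appeals to classical semigroup theory and to Kato \cite{kato:1986}. The gap is in the case $p>1$.

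\textbf{The case $p>1$.} Shen's theory \cite{Shen} is only available for $d\ge 3$ and for potentials in $B_q$ with $q\ge d/2$. His critical radius $1/m(x,v)$ is defined through $r^{2-d}\int_{B(x,r)}v$, and its basic properties need $v\in B_q$ with $q>d/2$. His pointwise fundamental-solution estimates rely on $d\ge 3$.

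The lemma, however, is claimed for every $q\in(1,\infty]$ and every dimension. It is used in exactly that generality in Section~\ref{sec:max_ineq_Lp}, with $v=(\lambda_V\wedge n)+\varepsilon$ and $\lambda_V\in B_q$ for an arbitrary $q>1$. For $q<d/2$, or for $d\le 2$, Shen's results do not apply at all. Checking that his constants are scale invariant cannot repair this, because the obstruction is a structural hypothesis of his method. The dependence of the constants, which you single out as the main obstacle, is not where the problem lies.

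The missing ingredient is \cite[Theorem 1.1]{auscher-benali:2007}, which is what the paper invokes. For nonnegative $v\in B_q$, $1<q\le\infty$, it gives $\|\Delta u\|_p+\|vu\|_p\le C\|(-\Delta+v)u\|_p$ for $1<p<q+\epsilon$. This holds in every dimension, with $C$ depending only on $d$, $p$ and $[v]_{B_q}$. Auscher and Ben Ali start from Kato's $L^1$ bound and extrapolate upward through an improved Fefferman--Phong inequality and an $L^p$-boundedness criterion for operators without kernels. That is why no condition of the type $q\ge d/2$ or $d\ge 3$ enters.

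\textbf{A smaller point in the $L^1$ argument.} The representation $u=\lim_{\varepsilon\to0}(\varepsilon+T_1)^{-1}T_1u$ does not follow from $v\ge0$, and it need not hold in $L^1$-norm. Indeed $(\varepsilon+T_1)^{-1}T_1u=u-\varepsilon(\varepsilon+T_1)^{-1}u$. For instance, for $d\ge 3$ and a small compactly supported $v$, the semigroup $(e^{-tT_1})_{t\ge0}$ retains a positive part of its mass, so $\|\varepsilon(\varepsilon+T_1)^{-1}u\|_1\not\to 0$.

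You can avoid this limit altogether. Apply your resolvent bound to $f_\varepsilon=(\varepsilon+T_1)u$, so that $u=(\varepsilon+T_1)^{-1}f_\varepsilon$ exactly. This gives $\|vu\|_1\le\|T_1u\|_1+\varepsilon\|u\|_1$, and you then let $\varepsilon\to0^+$.
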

\begin{proof}
(i) It follows from classical results of the semigroup theory.

(ii) Estimate \eqref{max_ineq_1_n} follows from \cite{kato:1986} and the fact that $v$ is bounded, whereas
estimate \eqref{max_ineq_p_n} follows from \cite[Theorem 1.1]{auscher-benali:2007}. Notice that the constant $c_p$ only depends on $p$, $d$ and $[v]_{B_q}$.
\end{proof}

\subsection{An interpolation result}
Next, we prove an interpolation result for analytic families of vector-valued operators,
which a crucial tool in Section \ref{sec:0:RT}.
In the scalar case, this goes back to the work of Stein \cite{stein56} and it has been extended first to analytic families of operators, taking values in Banach spaces, by Cwikel and Sagher \cite{CwSa88}, and then, again, to analytic families of multilinear operators by Grafakos and  Masty\l{}o \cite{GrMa14}. 
Here, we present a version adapted to our context, which is not a straightforward consequence of the above results, and, for the sake of completeness, provide a detailed proof.
	
First of all, we need to introduce the notion of analytic function of admissible growth.
From now on, for every $\alpha_0, \alpha_1\in\R$, with $\alpha_0<\alpha_1$, we set $\Pi_{\alpha_0}^{\alpha_1}=\{z\in\C\colon \alpha_0<\operatorname{Re}(z)<\alpha_1\}$  and denote by $\overline{\Pi}_{\alpha_0}^{\alpha_1}$ its closure. When, $\alpha_0=0$ and $\alpha_1=1$, we simply write $\Pi$ and $\overline\Pi$.

\begin{defi} 
Let $\alpha_0,\alpha_1\in\R$ be such that $\alpha_0<\alpha_1$. A function $F:\overline\Pi_{\alpha_0}^{\alpha_1}\to\C$, which is analytic in $\Pi_{\alpha_0}^{\alpha_1}$ and continuous in $\overline\Pi_{\alpha_0}^{\alpha_1}$, is called 
of admissible growth if  there exist constants $a\in (0,\pi)$ and $c>0$ such that 
\begin{equation*}
\log(|F(z)|)\le ce^{\frac{a|\operatorname{Im}(z)|}{\alpha_1-\alpha_0}},\qquad\;\,z\in\overline\Pi_{\alpha_0}^{\alpha_1}.
\end{equation*}
\end{defi}

\begin{defi} 
Let $\alpha_0,\alpha_1\in\R$ be such that $\alpha_0<\alpha_1$. A family 
$\{\bm{T}_z: z\in\overline\Pi_{\alpha_0}^{\alpha_1}\}$ of linear operators which map simple $\C^m$-valued functions defined in $\R^d$ with bounded support into measurable $\C^m$-valued functions, still defined in $\R^d$, is  called of admissible growth if for every pair of simple functions  $\f$ and $\g$, the map $\Phi_{\f,\g}\colon\overline \Pi_{\alpha_0}^{\alpha_1}\to\C$, defined by
\begin{equation*}
\Phi_{\f,\g}(z)\coloneqq \int_{\R^d}\langle \bm{T}_z\f,\g\rangle_{\C^m} dx, \qquad\;\, z\in \overline\Pi_{\alpha_0}^{\alpha_1},
\end{equation*}
is of admissible growth.
\end{defi}

To prove the interpolation theorem, we need the following lemma, which is an improvement of the well-known Hadamard's three lines lemma, due to I. Hirschman \cite[Lemma 1.3.8]{grafakos-classic}.
	
\begin{lemma}\label{lem:Hadamard}
Let $F:\overline\Pi\to\C$ be a function of admissible growth. Assume that there exist positive real measurable functions $M_0, M_1$ such that 
$|F(iy)|\le M_0(y)$ and $|F(1+iy)|\le M_1(y)$
for every $y\in\R$. Then, for $x\in (0,1)$, it holds that
\begin{equation}\label{eq:stima:F}
|F(x)|\le\exp{\left\{\frac{\sin(\pi x)}{2}\int_{\R}\left[\frac{\log(M_0(y))}{\cosh(\pi y)-\cos(\pi x)}+\frac{\log(M_1(y))}{\cosh(\pi y)+\cos(\pi x)}\right]dy\right\}}.
\end{equation}
\end{lemma}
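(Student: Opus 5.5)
The plan is the classical subharmonicity argument for Hirschman's refinement of the three lines lemma. We map the strip $\overline\Pi$ conformally onto the closed unit disc and use the Poisson representation of subharmonic functions. Set
\begin{equation*}
h(\zeta)=\frac{1}{\pi i}\log\Big(i\,\frac{1+\zeta}{1-\zeta}\Big),
\end{equation*}
which maps the open unit disc $D$ onto $\Pi$. On $\partial D$ the upper semicircle is sent to the line $\operatorname{Re}z=0$ and the lower one to $\operatorname{Re}z=1$, or the reverse depending on the branch; the points $\pm1$ correspond to $\operatorname{Im}z\to\mp\infty$. The function $u=\log|F\circ h|$ is subharmonic in $D$ and continuous up to $\partial D\setminus\{\pm1\}$, with values in $[-\infty,\infty)$.

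The key step is to justify, for $\zeta=re^{i\theta}$ with $r<1$, the Poisson inequality
\begin{equation*}
u(\zeta)\le\frac{1}{2\pi}\int_{-\pi}^{\pi}\frac{1-r^2}{1-2r\cos(\theta-\varphi)+r^2}\,u(e^{i\varphi})\,d\varphi,
\end{equation*}
despite the blow-up near $\zeta=\pm1$. I would first apply it on circles of radius $R<1$, where it holds because $u$ is subharmonic. Then I let $R\to1$ using the admissible growth hypothesis. Near $\varphi=0,\pi$ one has $|\operatorname{Im}h(Re^{i\varphi})|\approx\frac1\pi\log\frac{1}{|\varphi|}$ (resp.\ $|\varphi-\pi|$). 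Hence $u^+(Re^{i\varphi})\le c\,e^{a|\operatorname{Im}h|}\lesssim|\varphi|^{-a/\pi}$, and this is integrable uniformly in $R$ because $a<\pi$. That uniform integrability lets one pass to the limit via Fatou's lemma for the negative parts and dominated convergence for the positive parts. I expect this limiting step to be the main technical point; everything else is a change of variables.

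Next I insert the bounds $u\le\log M_0(y)$ and $u\le\log M_1(y)$ on the two boundary arcs. Since the conformal map preserves harmonic measure, one can change variables back to the strip, i.e.\ to $e^{i\varphi}=h^{-1}(j+iy)$ with $j=0,1$. This turns the Poisson kernel of the disc into the Poisson kernel of the strip evaluated at the real point $x$:
\begin{equation*}
\frac{\sin(\pi x)}{2}\,\frac{1}{\cosh(\pi y)\mp\cos(\pi x)},
\end{equation*}
with the minus sign on $\operatorname{Re}z=0$ and the plus sign on $\operatorname{Re}z=1$. The constants can be checked by the standard computation of $d\varphi/dy$ and of $|e^{i\varphi}-\zeta|^2$ for $\zeta=h^{-1}(x)$ real, or alternatively by verifying that this kernel reproduces bounded harmonic functions such as $\operatorname{Re}z$ and $1$.

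Exponentiating then yields \eqref{eq:stima:F}. If the integral of $\log M_j$ diverges to $+\infty$, the inequality is trivial. If $F(x)=0$, there is nothing to prove.
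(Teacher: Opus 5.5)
Your proposal is correct and is essentially the argument the paper relies on: the paper gives no proof of its own but cites \cite[Lemma 1.3.8]{grafakos-classic}, where Hirschman's lemma is proved this same way (same conformal map $h$, Poisson inequality for the subharmonic function $\log|F\circ h|$ on circles of radius $R<1$, the limit $R\to1$ justified by the uniform bound of order $|\varphi|^{-a/\pi}$ coming from admissible growth, change of variables back to the strip), and the version with $M_0,M_1$ follows at once because the kernels are positive. One small slip: for your $h$ the point $h^{-1}(x)=i\tan\big(\frac{\pi x}{2}-\frac{\pi}{4}\big)$ lies on the imaginary diameter of the disc, not the real one, so the kernel computation has to be done at that point (or after a rotation), although the strip kernel you end up with is the correct one.
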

	
We can now prove the interpolation result.

\begin{theo}\label{theo:vv:stein}
Let $\{\bm{T}_z: z\in\overline\Pi_{\alpha_0}^{\alpha_1}\}$ be a family of admissible growth. Fix $p_0,p_1,q_0,q_1\in [1,\infty)\cup\{\infty\}$, $\theta\in (0,1)$ and set
\begin{equation}\label{eq:inter:exponent}
\frac1p = \frac{1-\theta}{p_0} + \frac{\theta}{p_1},\qquad\;\,\frac1q = \frac{1-\theta}{q_0} + \frac{\theta}{q_1}.
\end{equation}
Suppose further that
\begin{equation*}
(i)~\|\bm{T}_{\alpha_0+iy}\f\|_{q_0}\le b_0(y)\|\f\|_{p_0},
\qquad\;\, (ii)~\|\bm{T}_{\alpha_1+iy}\f\|_{q_1}\le b_1(y)\|\f\|_{p_1}
\end{equation*}
for every simple function $\f$ with bounded support, every $y\in\R$ and some measurable functions $b_0,b_1:\R\to (0,\infty)$ such that
\begin{equation}\label{eq:gb}
\log(b_i(y))\le ce^{\frac{b|y|}{\alpha_1-\alpha_0}},\qquad\;\,y\in\R,\;\,i\in\{0,1\},
\end{equation}
for some constants $b\in (0,\pi)$ and $c\in (0,\infty)$.
Then, there exists a positive constant $c'$, depending only on $m$, $\theta$, $p_0$, $q_0$, $p_1$ and $q_1$, such that 
\begin{equation*}
\|\bm{T}_{\alpha_0+(\alpha_1-\alpha_0)\theta}\f\|_{q}\le c'\|\f\|_{p},\qquad\;\,
\bm{f}\in L^p(\R^d;\C^m).
\end{equation*}
\end{theo}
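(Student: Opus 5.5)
We follow Stein's classical argument, reduced to the scalar three-lines estimate of Lemma~\ref{lem:Hadamard} by duality. By an affine change of variable $z\mapsto \alpha_0+(\alpha_1-\alpha_0)z$ we may assume $\alpha_0=0$, $\alpha_1=1$, so that the growth conditions become the ones with denominator $1$. Let $\theta\in(0,1)$ be fixed and let $q'$ be the conjugate exponent of $q$. By duality it suffices to show
\begin{equation*}
\bigg|\int_{\R^d}\langle \bm{T}_\theta\f,\g\rangle\,dx\bigg|\le c'\|\f\|_p\|\g\|_{q'}
\end{equation*}
for simple functions $\f,\g$ with bounded support and $\|\f\|_p=\|\g\|_{q'}=1$. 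The final bound then extends to $L^p(\R^d;\C^m)$ by density, using Fatou's lemma if needed; the case $p=\infty$ is handled with the usual modification.

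For the vector-valued analytic extension we write $\f=\|\f\|\,\sign{\f}$ and $\g=\|\g\|\,\sign{\g}$. Since $\f$ is simple, $\|\f\|$ takes finitely many values and $\sign{\f}$ is a simple $\C^m$-valued function of unit norm. For $z\in\overline\Pi$ we set
\begin{equation*}
\f_z=\|\f\|^{p\left(\frac{1-z}{p_0}+\frac{z}{p_1}\right)}\sign{\f},\qquad
\g_z=\|\g\|^{q'\left(\frac{1-z}{q_0'}+\frac{z}{q_1'}\right)}\sign{\g}.
\end{equation*}
These are finite linear combinations $\sum_k e^{\mu_k z}\,\bm h_k$, where the $\bm h_k$ are simple functions with disjoint supports. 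Then
\begin{equation*}
F(z)=\int_{\R^d}\langle \bm{T}_z\f_z,\g_{\bar z}\rangle\,dx=\sum_{k,l}e^{\mu_k z+\bar\nu_l z}\,\Phi_{\bm h_k,\bm g_l}(z)
\end{equation*}
is analytic in $\Pi$, continuous on $\overline\Pi$, and of admissible growth, because each $\Phi_{\bm h_k,\bm g_l}$ is of admissible growth and the exponential factors are bounded on the strip. Here the sesquilinear pairing with $\g_{\bar z}$ is exactly what keeps $F$ holomorphic. On the boundary we have pointwise $\|\f_{iy}\|=\|\f\|^{p/p_0}$ and $\|\g_{iy}\|=\|\g\|^{q'/q_0'}$, so $\|\f_{iy}\|_{p_0}=1=\|\g_{iy}\|_{q_0'}$. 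Hölder's inequality together with hypothesis (i) gives $|F(iy)|\le b_0(y)$, and similarly $|F(1+iy)|\le b_1(y)$. Finally $F(\theta)=\int\langle \bm{T}_\theta\f,\g\rangle\,dx$, since $\f_\theta=\f$ and $\g_\theta=\g$.

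Lemma~\ref{lem:Hadamard} applied with $M_j=b_j$ then yields $|F(\theta)|\le \exp\{\dots\}$. The integral in that exponent converges because \eqref{eq:gb} with $b<\pi$ makes $\log b_j(y)/(\cosh(\pi y)\pm\cos(\pi\theta))$ integrable. This gives a constant depending only on $\theta$ and on $b_0,b_1$, the latter being fixed by $p_j$, $q_j$ and $m$ in our applications. The main obstacle I expect is verifying that $F$ is of admissible growth in the vector-valued setting. The point is that $\f_z$ must remain a finite combination of fixed simple $\C^m$-valued functions with scalar analytic coefficients, so that the definition of admissible family, which is stated only for fixed simple $\f,\g$, can be applied term by term. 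Writing $\sign{\f}$ as a single fixed simple vector function resolves this. The remaining care is for the degenerate exponents ($p_j=\infty$ or $q_j=1$), where the formulas for $\f_z,\g_z$ are interpreted with the obvious conventions.
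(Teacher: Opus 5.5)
Your argument is correct and follows the same Stein--Hirschman scheme as the paper: reduction to the unit strip, duality against normalized simple functions, an analytic family built from their moduli, and Lemma~\ref{lem:Hadamard} applied to the boundary bounds.

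The one real difference is how you build the analytic family. The paper raises the modulus of each component separately, $|\varphi_j|^{\beta(z)}$. That only yields $\|\bm F_{iy}\|_{p_0}\le c(p_0,m)$, and this is where the $m$-dependence of $c'$ enters. Your polar decomposition $\f=\|\f\|\,\sign{\f}$ instead gives $\|\f_{iy}\|_{p_0}=\|\g_{-iy}\|_{q_0'}=1$ exactly. Your constant therefore depends only on $\theta$, $b_0$ and $b_1$. As you note, some dependence on $b_0,b_1$ cannot be avoided, and the paper's $b(\theta)$ carries it as well.

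Your choice to pair with $\g_{\bar z}$ also makes the holomorphy of $F$ transparent for the Hermitian product $\langle\cdot,\cdot\rangle_{\C^m}$. The paper's expansion of $\bm\Phi(z)$ passes over this point by writing the coefficients of $\bm G_z$ without conjugation.
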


\begin{proof}
Let $p_0,p_1,q_0,q_1$ be as in the statement. We split the proof into two steps. In Step 1, we show that we can reduce ourselves to the case $\alpha_0=0$ and $\alpha_1=1$. In Step 2, we address this case.

{\em Step 1}.
Define 
$\widetilde{\bm T}_z= \bm{T}_{\alpha_0+(\alpha_1-\alpha_0)z}$ for every $z\in\overline\Pi$ Then, 
$\{\widetilde{\bm T}_z: z\in\overline\Pi\}$ is a family of operators of admissible growth. Clearly, the map $\omega\mapsto\widetilde{\bm T}_{\omega}$ is continuous in $\overline{\Pi}$ and analytic in $\Pi$. Moreover,
\begin{align*}
\widetilde{\bm\Phi}_{\bm f,\bm g}(\omega)=\int_{\R^d}\langle \widetilde {\bm T}_\omega\bm f,\bm g\rangle_{\C^m}dx
=\int_{\R^d}\langle {\bm T}_{\alpha_0+\omega(\alpha_1-\alpha_0)}\bm f,\bm g\rangle_{\C^m}dx
=\bm{\Phi}_{\bm f,\bm g}(\alpha_0+\omega(\alpha_1-\alpha_0))   
\end{align*}
for every $\omega\in\Pi$. From this formula and recalling that  
$\bm{\Phi}_{\f,\bm g}$ is of admissible growth, it follows immediately that $\widetilde{\bm\Phi}_{\bm f,\bm g}$ is of admissible growth as well.
Moreover,
\begin{equation*}
\|\widetilde {\bm T}_{iy}\f\|_{q_0}\le b_0((\alpha_1-\alpha_0)y) \|\f\|_{p_0},\qquad\;\, \|\widetilde {\bm T}_{1+iy}\f\|_{q_1}\le b_1((\alpha_1-\alpha_0)y)\|\bm f\|_{p_1}
\end{equation*}
for every $y\in\R$ and every simple function $\f$ with bounded support. 

Next, observe that \eqref{eq:gb} implies
\begin{eqnarray*}
\log(b_j((\alpha_1-\alpha_0)y))\le Ce^{b|y|},\qquad\;\,y\in\R,\;\,j=0,1.   
\end{eqnarray*}
It thus follows that the family $\{\widetilde {\bm T}_z: z\in\overline\Pi\}$ satisfies the assumptions of the theorem with $\alpha_0=0$ and $\alpha_1=1$. Once the theorem is proved in this case, we can conclude that the operator ${\bm T}_{\alpha_0+\theta(\alpha_1-\alpha_0)}$ extends to a bounded operator from $L^p(\R^d;\C^m)$ to $L^q(\R^d;\C^m)$ and its operator norm can be bounded by a positive constant depending only on
$m$, $p_0$, $q_0$, $p_1$ and $q_1$. 

{\em Step 2}. We now prove the theorem in the case $\alpha_0=0$ and $\alpha_1=1$. For this purpose, let $\boldsymbol{\varphi}, {\boldsymbol{\psi}}:\R^d\to\C^m$ be two simple functions with bounded support such that $\|\boldsymbol{\varphi}\|_p=\|\boldsymbol{\psi}\|_{q'}=1$. Then, there exist finitely many pairwise disjoint Lebesgue measurable subsets $E_k$ and $F_h$ of $\R^d$ with finite measure, positive numbers $v_{k,1},\dots,v_{k,m}$ and $w_{h,1},\dots,w_{h,m}$ and real numbers $\theta_{k,j},\delta_{h,j}\in\R$ for $k\in\{1,\dots,r\}$, $h\in\{1,\dots,s\}$ and $j\in\{1,\dots,m\}$ such that
\begin{align*}
\boldsymbol{\varphi}(x) & = \sum_{k=1}^r\sum_{j=1}^mv_{k,j}e^{i\theta_{k,j}}\mathds{1}_{E_k}(x){\bm e}_j =: \sum_{j=1}^m\varphi_j(x){\bm e}_j,\\
\boldsymbol{\psi}(x) &= \sum_{h=1}^s\sum_{j=1}^mw_{h,j}e^{i\delta_{h,j}}\mathds{1}_{F_h}(x){\bm e}_j =: \sum_{j=1}^m\psi_j(x){\bm e}_j
\end{align*}
for every $x\in\R^d$,
where $\bm{e}_j$, $j\in\{1,\ldots,m\}$, is the $j$-th element of the canonical basis of $\C^m$. For $z\in\overline \Pi$, we define 
\begin{equation*}
\beta(z)\coloneqq\left( \frac{1-z}{p_0}+\frac{z}{p_1}\right)p,\qquad\;\,\gamma(z) \coloneqq \left( \frac{1-z}{q_0'} + \frac{z}{q_1'} \right)q',\qquad\;\,\textit{ if }p,q'<\infty,
\end{equation*}
$\beta(z)=1$, if $p=\infty$, and $\gamma(z)=1$ if $q'=\infty$. Moreover, we introduce the functions 
\begin{align*}
{\bm F}_z
=\sum_{j=1}^m\sum_{k=1}^rv_{k,j}^{\beta(z)}e^{i\theta_{k,j}}\mathds{1}_{E_k}{\bm e}_j,\qquad\;\,
{\bm G}_z=
\sum_{j=1}^m\sum_{h=1}^sw_{h,j}^{\gamma(z)}e^{i\delta_{h,j}}\mathds{1}_{F_h}{\bm e}_j.
\end{align*}
From \eqref{eq:inter:exponent}, it follows that $\bm{F}_{\theta} = \boldsymbol{\varphi}$ and $\bm{G}_{\theta} = \boldsymbol{\psi}$.

Let $\bm{\Phi}:\overline{\Pi}\to \C$ be the function defined by
\begin{equation*}
\bm{\Phi}(z) \coloneqq \int_{\R^d}\langle \bm{T}_z\bm{F}_z,\bm{G}_z \rangle_{\C^m} dx, \qquad\;\, z\in\overline{\Pi}.
\end{equation*}
Note that
\begin{align*}
\bm{\Phi}(z) 
&= \sum_{j,\ell=1}^m\sum_{k=1}^r\sum_{h=1}^sv_{k,j}^{\beta(z)}w_{h,\ell}^{\gamma(z)}e^{i(\theta_{k,j}+\delta_{h,\ell})}\int_{\R^d}\langle \bm{T}_z(\mathds{1}_{E_k}{\bm e}_j),\mathds{1}_{F_h}{\bm e}_{\ell}\rangle_{\C^m} dx
\end{align*}
for every $z\in\overline\Pi$.
The assumptions on the family $\{\bm{T}_z: z\in\overline\Pi\}$ and the definition of the functions $\alpha$ and $\gamma$ imply that $\bm{\Phi}$ is of admissible growth. 

Assume that $p,q'<\infty$. Since $\operatorname{Re}(\beta(iy))=p/p_0$ for every $y\in\R$ and the set $E_1,\ldots,E_r$ are pairwise disjoint, recalling that $\|\bm{\varphi}\|_p=1$ we obtain that
\begin{align*}
\int_{\R^d}\|\bm{F}_{iy}\|^{p_0}dx 
\le &c(p_0,m) \sum_{j=1}^m\int_{\R^d}|(\bm{F}_{iy})_j|^{p_0}dx\\
=&c(p_0,m)\sum_{j=1}^m\sum_{k=1}^r\int_{E_k}|v_{k,j}|^{p}dx\notag\\
=& c(p_0,m)\int_{\R^d}\sum_{j=1}^m|\varphi_j|^pdx\le 
c(p_0,m)
\end{align*}
for every $y\in\R$. Arguing similarly, we can show that
\begin{align*}
\|\bm{F}_{1+iy}\|_{p_1} \leq c(p_1,p,m),\qquad
\|\bm{G}_{j+iy}\|_{q_j'} \leq c(q_j',q',m),\qquad\,j=0,1. 
\end{align*}

If $p=\infty$, which means that $p_0=p_1=\infty$, then $\|\bm{F}_{iy}\|_{p_0}=\|\bm{F}_{1+iy}\|_{p_1}= 1$ for every $y\in\R$. Similarly, if $q'=\infty$, then 
$\|\bm{G}_{iy}\|_{q_0'}=\|\bm{G}_{1+iy}\|_{q_1'}=1$.
Therefore, for every $y\in\R$ we get
\begin{align*}
|\bm\Phi(iy)| \le &\int_{\R^d}|\langle \bm{T}_{iy}\bm{F}_{iy},\bm{G}_{iy} \rangle_{\C^m}|dx\\
\le &\|\bm{T}_{iy}\bm{F}_{iy}\|_{q_0}\|\bm{G}_{iy}\|_{q_0'}\\
\le & b_0(y)\|\bm{F}_{iy}\|_{p_0}\|\bm{G}_{iy}\|_{q_0'} \le  c_0 b_0(y)
\end{align*}
and
\begin{align*}
|\bm{\Phi}(1+iy)| 
\le &\|\bm{T}_{1+iy}\bm{F}_{1+iy}\|_{q_1}\|\bm{G}_{1+iy}\|_{q_1'}\\
\le & b_1(y)\|\bm{F}_{1+iy}\|_{p_1}\|\bm{G}_{1+iy}\|_{q_1'} \le  c_1 b_1(y),
\end{align*}
where $c_0$ and $c_1$ are positive constants which depend at most on $p_0$, $p_1$, $q_0$, $q_1$ and $m$.

This proves that $\bm{\Phi}$ satisfies all the assumptions of Lemma \ref{lem:Hadamard} and we can conclude that
\begin{equation}\label{eq:Phi:final:estimate}
\left|\int_{\R^d}\langle \bm{T}_{\theta}\boldsymbol{\varphi},\boldsymbol{\psi}\rangle_{\C^m} dx\right| = |\bm{\Phi}(\theta)|\le b(\theta),
\end{equation}
where
\begin{equation*}
b(\theta)\coloneqq\exp\bigg\{\frac{\sin(\pi\theta)}{2}\int_{\R}\bigg [\frac{\log(c_0 b_0(y))}{\cosh(\pi y)-\cos(\theta\pi)}+\frac{\log(c_1 b_1(y))}{\cosh(\pi y)+\cos(\pi\theta)}\bigg ]dy\bigg \}.
\end{equation*}

Note that the growth condition on the functions $b_i$ ($i\in\{0,1\}$) implies that the right-hand side of \eqref{eq:stima:F} is finite.   

Taking the supremum in \eqref{eq:Phi:final:estimate} over all the simple functions $\boldsymbol{\psi}$ with bounded support and unit $L^{q'}$-norm, we get, using also the linearity of the operator $\bm{T}_{\theta}$, $\|\bm{T}_{\theta}\boldsymbol{\varphi}\|_q\le b(\theta)=b(\theta)\|\boldsymbol{\varphi}\|_p$. The density of the set of all the simple functions with bounded support in $L^p(\R^d;\C^m)$ implies that the operator $\bm{T}_{\theta}$ extends uniquely to a bounded linear operator from $L^p(\R^d;\C^m)$ into $L^q(\R^d;\C^m)$, for all $p$ and $q$ as in \eqref{eq:inter:exponent}.
\end{proof}

\end{document}